\newtheorem{theorem}{\bf Theorem}[section]
\newtheorem{lemma}[theorem]{\bf Lemma}
\newtheorem{proposition}[theorem]{\bf Proposition}
\newtheorem{corollary}[theorem]{\bf Corollary}
\theoremstyle{definition}
\newtheorem{definition}[theorem]{\bf Definition}
\theoremstyle{remark}
\newtheorem{rem}[theorem]{\bf Remark}
\numberwithin{equation}{section}
\newcommand{\mA}{{\mathcal A}}
\newcommand{\sR}{{ \mathscr R }}
\newcommand{\supp}{\text{supp}}
\newcommand{\wsn}{\text{weak$^*$-$\lim_n $}}
\newcommand{\wsa}{\text{weak$^*$-$\lim_\alpha $}}
\newcommand{\wa}{\text{weak-$\lim_\alpha $}}
\newcommand{\1}{{\bf 1}}
\begin{document}
	
	\title[power boundedness property of $B(H)  $ and $A(H)$]{Power boundedness in Fourier and Fourier-Stieltjes
algebra of an ultraspherical hypergroup }

\author[R.  Esmailvandi]{Reza Esmailvandi}
\address{Reza Esmailvandi\newline\indent Department of Mathematical Sciences,\newline\indent Isfahan University of Technology,\newline\indent Isfahan 84156-83111, Iran.}
\email{\textcolor[rgb]{0.00,0.00,0.84}{r.esmailvandi@math.iut.ac.ir} 
}

\author[M. Nemati ]{Mehdi Nemati}
\address{Mehdi Nemati\newline\indent Department of Mathematical Sciences,\newline\indent Isfahan University of Technology,\newline\indent Isfahan 84156-83111, Iran.\newline\indent
and\newline\indent
School of Mathematics,\newline\indent Institute for Research in Fundamental Sciences (IPM),\newline\indent
P.O. Box: 19395--5746, Tehran, Iran}
\email{\textcolor[rgb]{0.00,0.00,0.84}{m.nemati@iut.ac.ir}}

\author[N. S. Kumar]{N. Shravan kumar}
\address{N. Shravan kumar\newline\indent Department of mathematics,\newline\indent Indian Institute of Technology Delhi,\newline\indent Delhi - 110016,\newline\indent India.}
\email{\textcolor[rgb]{0.00,0.00,0.84} {shravankumar.nageswaran@gmail.com}}

	\keywords{Fourier algebras, ultraspherical hypergroups, locally compact groups, power boundedness, Ces\`{a}ro boundedness.}
	
	\subjclass[2010]{ 43A62, 43A30,; Secondary: 46J10}
	
	\begin{abstract}
		Let $H$ be an ultraspherical hypergroup associated with a locally compact group $G$ and a spherical projector $\pi$ and let $A(H)$ and $B(H)$ denote the Fourier and Fourier-Stieltjes algebras, respectively, associated with $H.$ In this note, we study power boundedness and  Ces\`aro boundedness in $B(H)$. We also characterize the power bounded property for both $A(H)$ and $B(H).$
	\end{abstract}
	\maketitle
	
	\section{Introduction}
	Let $G$ be a locally compact abelian group. A classical result of Schreiber \cite{sch70} states that the algebras $L^1(G)$ and $M(G)$ are power bounded if and only if $G$ is compact and finite, respectively. A natural generalizations of the group algebra and the measure algebra are the Fourier and Fourier-Stieltjes algebras associated to general locally compact groups. These algebras are introduced by Eymard \cite{eym64} and since then, they have been a major object of study in abstract harmonic analysis. In 2011, Lau and Kaniuth showed that $A(G)$ and $B(G)$ are power bounded if and only if $G$ is discrete and finite, respectively.
	
	Let $G$ be a locally compact group and let $\pi$ be a spherical projector. Let $H$ be an ultraspherical hypergroup associated to $G$ and $\pi.$ Let $A(H)$ and $B(H)$ denote the Fourier and Fourier-stieltjes algebras on $H,$ introduced by Muruganandam in 2008 \cite{mur08}. The purpose of this paper, is to extend the result of Lau and Kaniuth on groups to the context of ultraspherical hypergroups. More precisely, we show that $A(H)$ and $B(H)$ possess the power bounded property if and only if $H$ is discrete and finite, respectively.
	
	In Section 4, we characterize the power boundedness of $A(H).$ In fact, this is the content of Theorem \ref{CPBP}. Initially, in Section 3, results from \cite{klu10,klu11} are used to deduce relations among power bounded elements of $B(H)$ and certain sets from the coset ring. We also study the sequence $\{u^n\}_{n\in \mathbb{N}}$ in relation to the same sets considered, where $u$ is a power bounded element of $B(H).$ Finally, we study Ces\`aro  boundedness. By making use of the results of Section 3, in Section 4, we characterize the power bounded property for $B(H).$ This is Theorem \ref{CPBPFS}. 
	
	We shall begin with some preliminaries that are needed in the sequel.
	
	\section{preliminaries}
	
	\subsection{Hypergroups} 
	For the definition and basic properties of  hypergroups, we refer the reader mainly to \cite{blh95, jew75}. In \cite{jew75}, Jewett calls a hypergroup as convo. For a hypergroup $ H  $ with the identity element $ e $, the mapping  $ x\mapsto \check{x}$ will always denote the involution of $ H $. For the sake of completeness and
	the convenience of the reader, let us recall the definition  of an ultraspherical hypergroup associated with a locally compact group $G$ (see \cite[Definition 2.1]{mur08} and \cite[Definition 2.2]{dkl14}).
	
	\begin{definition} 
		A linear map $\pi:C_c(G)\rightarrow C_c(G)$ is called a spherical projector if it satisfies the following for all $f,g\in C_c(G).$
		
		\begin{enumerate}
			\item\begin{enumerate}
				\item $\pi^2=\pi$ and $\pi$ is positivity preserving;
				\item $\pi(\pi(f)g)=\pi(f)\pi(g)$;
				\item $\langle \pi(f),g \rangle=\langle f,\pi(g) \rangle$;
				\item $\int_G\pi(f)(x)\ dx=\int_G f(x)\ dx.$
			\end{enumerate}
			\item $\pi(\pi(f)*\pi(g))=\pi(f)*\pi(g)$
			\item Let $\pi^*:M(G)\rightarrow M(G)$ denote the transpose of $\pi$ and let $$\mathcal{O}_x=supp(\pi^*(\delta_x)),\ x\in G.$$ Then for all $x,y\in G,$
			\begin{enumerate}
				\item either $\mathcal{O}_x\cap\mathcal{O}_y=\emptyset$ or $\mathcal{O}_x=\mathcal{O}_y;$
				\item if $x\in\mathcal{O}_y\Rightarrow x^{-1}\in\mathcal{O}_{y^{-1}};$
				\item if $\mathcal{O}_{xy}=\{e\}\Rightarrow\mathcal{O}_y=\mathcal{O}_{x^{-1}};$
				\item the map $x\mapsto\mathcal{O}_x$ from $G$ into $\mathcal{K}(G)$ is continuous with respect
				to the Michael topology on the space $\mathcal{K}(G)$ of all nonempty compact subsets of $ G $ .
			\end{enumerate}
		\end{enumerate}
	\end{definition}
	
	Note that $\pi$ extends to a norm decreasing linear map on various function spaces, including $L^p(G),\ 1\leq p\leq \infty.$ A function $f$ is called $\pi$-radial if $\pi(f)=f$ and similarly, a measure $\mu$ is called $\pi$-radial if $\pi^*(\mu)=\mu.$
	
	Let $ H = \{\dot{x}=\mathcal{O}x: x \in G\}$, equipped with the natural quotient topology under the quotient  map $ p: G\rightarrow H $. We identify $M(H)$ with the space of all  $ \pi^* $-radial measures in $ M(G).$ Define the product on $M(H)$ as $ \delta_{\dot x} \ast  \delta_{\dot y}= \pi^{\ast}(\pi^{\ast}(\delta_{x}) \ast \pi^{\ast}(\delta_{y} )) $ for all $ x , y \in G $. With this structure $ H $ becomes a locally compact hypergroup, called a spherical hypergroup \cite[Theorem 2.12]{mur08}. A spherical hypergroup is further called an ultraspherical hypergroup if the modular function on $G$ is $\pi$-radial.
	
	One of the most common and interesting example of an ultraspherical hypergroup is the double coset hypergroup. Let $G$ be a locally compact group containing a compact subgroup $K.$ Define $\pi:C_c(G)\rightarrow C_c(G)$ as $$\pi(f)(x)=\int_K\int_K f(k^\prime xk)\ dk\ dk^\prime.$$ Then $\pi$ defines a spherical projector and the resulting hypergroup is an ultraspherical hypergroup.  
	
	Specific examples of the double coset hypergroups are orbit hypergroups which are constructed as follows. Let $ G $ be a locally compact group and let $ K $ be a compact subgroup of $ Aut(G) $, the group of all topological automorphisms of $ G $. For each $x \in G$ denote by $ K(x) =\{ \alpha(x) : \alpha \in K\}$ the orbit of $ x $  under $ K $. The orbit hypergroup $ G_{K} $ is the space of all orbits $ K(x), x\in G $, equipped  with the quotient topology and the obvious hypergroup structure. Now,  consider the semidirect product 
	$G \rtimes K$, where the group  product of  $ (x , \alpha)$ with $(y , \beta)$ in $G \rtimes K$ is given by  
	$$(x , \alpha)(y , \beta)= (x \alpha(y) , \alpha \beta).$$ Then the map $ K(x) \rightarrow K(x , \beta) K $ is a toplogical hypergroup isomorphism between $ G_{K} $  and the double coset hypergroup $(G \rtimes K) / /  K  $ (see \cite[Section 8]{jew75}).
	
	Another interesting class of examples for ultraspherical hypergroups in the context of Lie groups is due to Damek and Ricci \cite{dar92}. They called the spherical projectors as average projector.
	
	Unlike the theory of locally compact groups, the existence of a Haar measure on hypergroups
	remains an open problem. But for specific cases of hypergroups including commutative
	hypergroups, discrete hypergroups, and compact hypergroups, a unique (up to a scalar multiple) Haar measure always exists, \cite{blh95}. It is shown in \cite [Proposition 2.14]{mur08}  that every ultraspherical hypergroup admits a Haar measure.
	More precisely, if $H$ is an ultraspherical hypergroup associated to  a locally compact group
	$G$,
	then a left Haar measure on $H$ is given by 
	$$\int_{H} f(\dot{x})d\dot{x}=\int_G f(p(x))dx,
	\qquad  (f\in C_{c}(H)).$$ 

	The following lemma is well known in the case of locally compact groups.
	\begin{lemma}\label{lem1}
		Let $H$ be a non-discrete ultraspherical hypergroup with Haar measure $\lambda.$ Then $H$ possesses a compact nowhere dense subset of positive Haar measure.
	\end{lemma}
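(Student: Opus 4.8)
The plan is to imitate the classical construction of a \emph{fat Cantor set}. The two properties of $\lambda$ that make the construction work are, first, that $\lambda$ has full support (every nonempty open subset of $H$ has strictly positive measure, while relatively compact open sets have finite measure) and, second, that $\lambda$ is non-atomic, i.e. $\lambda(\{x\})=0$ for every $x\in H$. Granting these, I would fix a nonempty open set $U$ with compact closure and $0<\lambda(U)<\infty$, carve out of $U$ a dense open subset $O$ of small measure, and take the complement. Since a Radon measure is outer regular and $\lambda(\{x\})=0$, every point of $U$ admits arbitrarily small open neighbourhoods of arbitrarily small measure; so, choosing a countable dense set $\{x_n\}\subseteq U$ and neighbourhoods $V_n\ni x_n$ with $\lambda(V_n)<2^{-n}\varepsilon$, the set $O=U\cap\bigcup_n V_n$ is dense in $U$ with $\lambda(O)<\varepsilon$. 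For $\varepsilon<\lambda(U)$ the set $K:=\overline U\setminus O$ is closed in the compact set $\overline U$ (hence compact), has empty interior (because $O$ is dense in $U$ and the boundary of $U$ is nowhere dense), and satisfies $\lambda(K)\ge\lambda(U)-\varepsilon>0$. This $K$ is the required compact nowhere dense set of positive measure.

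To produce the countable dense subset $\{x_n\}$, I would first reduce to the second countable case, exactly as one does for groups: pass to an open, compactly generated (hence $\sigma$-compact) open sub-hypergroup, and then to a metrizable quotient by a compact sub-hypergroup in the spirit of the Kakutani--Kodaira theorem, run the above construction in the metrizable quotient, and pull the resulting set back along the quotient map. Because the preimage of a nowhere dense set under an open continuous surjection is again nowhere dense, and because preimages of compact sets are compact, the pulled-back set keeps all the required properties; positivity of its measure follows from the compatibility of $\lambda$ with the quotient. The same remark applies to the quotient map $p:G\to H$, which I shall use below.

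The genuine difficulty is the non-atomicity of $\lambda$. In the group case it is immediate from homogeneity: all points carry equal Haar measure, so a single atom forces discreteness. For $H$ this homogeneity is lost, and the heart of the matter is to show that a point of positive Haar measure must be isolated, whence $H$ would be discrete, contrary to hypothesis. Here I would exploit the group $G$ underlying $H$: from $\int_H f\,d\lambda=\int_G f\circ p\,dx$ one reads off $\lambda(\{\dot x\})=\mathrm{Haar}_G(\mathcal{O}_x)$, and, since $p$ is an open map, $\{\dot x\}$ is open in $H$ precisely when the compact saturated set $\mathcal{O}_x=\supp(\pi^*(\delta_x))$ is open in $G$. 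It therefore suffices to prove the dichotomy that $\mathcal{O}_x$ is \emph{either open or null}: a single fibre of $p$ cannot have positive measure without having nonempty interior. I expect to obtain this from a Steinhaus-type argument applied to the invariance of $\mathcal{O}_x$ (in the guiding double-coset example $\mathcal{O}_x=KxK$, by viewing $\mathcal{O}_x$ as a single orbit of the compact group $K$ acting on the homogeneous space $K\backslash G$ and invoking that an orbit of positive measure is open). Since non-discreteness of $H$ forces non-discreteness of $G$ and rules out open fibres, this would give $\lambda(\{\dot x\})=0$ for every $x$. Establishing this dichotomy in full generality is the step I anticipate will require the most care.
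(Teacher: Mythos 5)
Your construction (carve small neighbourhoods out of a countable dense set and take the complement; reduce the general case to the second countable one by passing to a quotient by a compact subhypergroup \`a la Kakutani--Kodaira) is the same as the paper's. The genuine gap sits exactly where you flagged it: the non-atomicity of $\lambda$. Your proposed derivation of it is wrong. Even granting the dichotomy that each fibre $\mathcal{O}_x$ is either open or null in $G$, the inference ``non-discreteness of $H$ \ldots rules out open fibres'' does not follow: a non-discrete locally compact group can perfectly well contain compact \emph{open} subsets (e.g.\ $\mathbb{Z}_p\subseteq\mathbb{Q}_p$), and correspondingly a non-discrete hypergroup can have isolated points --- non-discreteness only says that \emph{some} point fails to be isolated, not that none is. So ``a point of positive Haar measure must be isolated'' does not contradict the hypothesis, and non-atomicity is not established.

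Moreover the gap cannot be closed by a better argument. Let $K=\mathbb{Z}_p^{*}$ act on $\mathbb{Q}_p$ by multiplication and let $H$ be the resulting orbit hypergroup (equivalently, the double coset hypergroup of $\mathbb{Q}_p\rtimes\mathbb{Z}_p^{*}$, which is unimodular, so $H$ is ultraspherical by the paper's own list of examples). Its points are $\dot 0$ and the spheres $p^{n}\mathbb{Z}_p^{*}$, $n\in\mathbb{Z}$; each sphere is a compact open fibre, so it is an isolated point of $H$ of mass $p^{-n}(1-p^{-1})>0$, while $\lambda(\{\dot 0\})=0$ and $\{\dot 0\}$ is not open, so $H$ is non-discrete. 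A nowhere dense subset of this $H$ can contain no isolated point, hence is contained in $\{\dot 0\}$ and is null: the statement of the lemma itself fails here. (The paper's proof tacitly makes the same non-atomicity assumption when it chooses $V_n$ with $\lambda(t_n\ast V_n)\le 2^{-(n+1)}$, which is impossible when $t_n$ is an atom of mass exceeding $2^{-(n+1)}$; so your instinct that this is ``the heart of the matter'' was right, but neither your sketch nor the paper resolves it, and some additional hypothesis on $H$ is needed.) Incidentally, your intersection with $\overline{U}$ to guarantee compactness is more careful than the paper's argument, which only produces a closed nowhere dense set.
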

	\begin{proof}
		Suppose that $H$ is second countable. In particular, $H$ is separable. Let $\{t_n\}$ be a countable dense subset of $H.$ Now, for each $n\in\mathbb{N},$ choose an open set $V_n$ such that $\dot{e}\in V_n$ and $\lambda(t_n*V_n)\leq \frac{1}{2^{n+1}}.$ Let $E=\left(\underset{n\in\mathbb{N}}{\cup}t_n*V_n\right)^c.$ Then $E$ satisfies the requirements of the lemma.
		
		Now, suppose that $H$ is not second countable. Using \cite[Theorem 1.1]{kum17}, there exists a compact subhypergroup $K$ of $H$ such that $H//K$ is second countable. Now, by the preceding paragraph, there exists a subset $\widetilde{E}$ of $H//K$ such that $\widetilde{E}$ is nowhere dense and has positive Haar measure. Let $E=\textsf{q}^{-1}(\widetilde{E}),$ where $\textsf{q}$ denotes the canonical quotient map from $H$ onto $H//K.$ This set $E$ satisfies the requirements of the lemma. 
	\end{proof}
	
	Let $G$ be a locally compact group. The coset ring, denoted $\sR(G),$ is defined as the Boolean ring generated by all cosets of subgroups of $G.$ The closed coset ring, denoted $\sR_c(G),$ is defined to be $$\sR_c(G)=\{E\in \sR(G):E~{\text {is closed in} }~ G \}.$$ Let $ H $ be an ultraspherical hypergroup associated with $ G $. We will write $$\sR(H) = \{\dot E \subseteq H: p^{-1}(\dot E)\in \sR(G)\},$$ and $$\sR_c(H)=\{\dot E\subseteq H: p^{-1}(\dot E) \in  \sR_c(G) \}.$$
	
	\subsection{Fourier Algebra} 
	Let $B(H)$ denote the Fourier-steiltjes algebra on $ H $. Then $$B(H)=B_\pi(G)=\{u \in B(G): u ~ is~  \pi \text{-} radial\}.$$ Let $C^*(H)$ denote the hypergroup C*-algebra associated with $H.$ Then $B(H)=C^*(H)^*.$ For each $ u \in B(H) $, we denote by $ \tilde u $ the element in $B_\pi(G)$ associated with $ u $; that is, $ u(\dot x) = \tilde u(x) $ for all $ x \in G $ and $ \Vert u \Vert _{B(H)}=\Vert \tilde u \Vert _{B(G)}$.
	
	Let $A(H)$ denote the Fourier algebra corresponding to the  ultraspherical hypergroup
	$H$.  Recall that the Fourier algebra
	$A(H)$ is semisimple, regular and Tauberian \cite[Theorem 3.13]{mur08}. As in the group case, let $\lambda$ denote the left regular representation of $H$ on $L^2(H)$ given by 
	$$
	\lambda(\dot{x})(f)(\dot{y})=f(\check{\dot{x}}\ast\dot{y})\quad (\dot{x},\dot{y}\in H, f\in L^2(H)).
	$$
	This can be extended to $L^1(H)$ by $\lambda(f)(g)=f*g$ for all $f\in L^1(H)$ and $g\in L^2(H)$. Let $C^*_{\lambda}(H)$ denote the completion of $\lambda(L^1(H))$
	in $B(L^2(H))$ which is called the reduced $C^*$-algebra of $H$. The von Neumann algebra generated
	by $\{\lambda(\dot{x}): \dot{x}\in H\}$ is called the von Neumann algebra of $H$, and is denoted by $VN(H)$. Note that $VN(H)$ is isometrically isomorphic to the dual of $A(H)$. Moreover, $A(H)$ can be considered as an ideal of $B_\lambda(H)$, where $B_\lambda(H)$ is the dual of $C_\lambda^*(H)$. As an immediate consequence of \cite[Theorem 4.2]{mur08} and \cite[Lemma 3.7]{dkl14} we obtain the following
lemma.
	
	\begin{lemma} \label{lem12}
		Suppose that $H$ is an ultraspherical hypergroup associated to an amenable locally compact group $G$. Then
		\item[(i)]
		$ C^*_\lambda(H)= C^*(H) $ and $ B_\lambda(H) = B(H) $. 
		\item[(ii)]
		The Fourier algebra $ A(H) $ admits a bounded approximate identity. Let $ (e_\alpha) $ be a bounded approximate identity of $ A(H) $. Then, after passing to a subnet if necessary, $e_\alpha \rightarrow 1 $ in the weak$ ^* $-topology $ \sigma(B_\lambda (H) , C^*_\lambda(H)) $.
	\end{lemma}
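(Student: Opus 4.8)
The plan is to transfer both assertions from the amenable group $G$ to the hypergroup $H$ by exploiting the identification of $\pi$-radial objects, letting the two cited results carry the structural load while I supply the short duality and pointwise arguments. The guiding principle is that the (reduced) representation theory of $H$ is exactly the $\pi$-radial part of that of $G$, so that amenability-type phenomena descend to $H$.

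For part (i), I would begin from the classical fact that amenability of $G$ gives $C^*(G) = C^*_\lambda(G)$, equivalently that the trivial representation is weakly contained in the left regular representation, and hence $B(G) = B_\lambda(G)$. The point is then that, under the identifications $B(H) = B_\pi(G)$ and the corresponding description of $A(H)$ furnished by \cite[Theorem 4.2]{mur08}, together with the matching description of the reduced objects $C^*_\lambda(H)$ and $B_\lambda(H)$ provided by \cite[Lemma 3.7]{dkl14}, the canonical surjection $C^*(H) \twoheadrightarrow C^*_\lambda(H)$ is the restriction to the range of the projector induced by $\pi$ of the analogous surjection for $G$. Since the latter is an isomorphism by amenability, so is the former, whence $C^*(H) = C^*_\lambda(H)$. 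Dualizing gives $B_\lambda(H) = C^*_\lambda(H)^* = C^*(H)^* = B(H)$.

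For part (ii), amenability of $G$ forces $H$ to be amenable, again through the $\pi$-radial correspondence of \cite[Lemma 3.7]{dkl14}, and the hypergroup analogue of Leptin's theorem then supplies a bounded approximate identity $(e_\alpha)$ for $A(H)$, which may be taken with $\|e_\alpha\| \le 1$. Since $A(H)$ is a closed ideal in $B_\lambda(H) = B(H)$ and $(e_\alpha)$ is bounded, Banach--Alaoglu permits passage to a subnet converging weak$^*$ to some $u \in B_\lambda(H)$ with $\|u\| \le 1$. Using the module action $A(H) \cdot C^*_\lambda(H) \subseteq C^*_\lambda(H)$, right multiplication by a fixed $v \in A(H)$ is weak$^*$-to-weak$^*$ continuous on $B_\lambda(H)$; combined with the approximate-identity relation $e_\alpha v \to v$ in norm, this yields $uv = v$ for every $v \in A(H)$, so $u$ is an identity for the ideal $A(H)$.

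The delicate step, and the one I expect to be the main obstacle if one works from scratch rather than merely quoting the references, is to identify this weak$^*$ limit as the constant function $1$ rather than as some other norm-one identity for $A(H)$. Here I would invoke the fact recorded above that $A(H)$ is regular and Tauberian: for each $\dot{x} \in H$ there is $v \in A(H)$ with $v(\dot{x}) \neq 0$, and since $uv = v = 1\cdot v$ as functions, the relation $(u(\dot{x}) - 1)\,v(\dot{x}) = 0$ forces $u(\dot{x}) = 1$ at every point, so $u = 1$. Thus it is this pointwise argument, powered by the function-algebra structure of $A(H)$, rather than the mere existence of the bounded approximate identity, that closes the proof.
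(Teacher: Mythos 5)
Your proposal is correct and follows the same route the paper intends: the paper derives this lemma directly from \cite[Theorem 4.2]{mur08} and \cite[Lemma 3.7]{dkl14}, which is exactly the structural input you invoke for the identification of the reduced objects and the existence of the bounded approximate identity. The remaining steps you supply --- the weak$^*$-compactness of the bounded net, the module-action argument giving $uv=v$ for all $v\in A(H)$, and the use of regularity to force $u=1$ pointwise --- are the standard completion of that derivation and contain no gaps.
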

	
	\begin{lemma} \label{lem2}
		Let $ H $ be an ultraspherical hypergroup. Then $ C^*(H) $ is a Banach $B(H)$-bimodule with the module action given by
		$$\langle\varphi \cdot \lambda(f) , \psi\rangle
		=\langle \lambda(f) , \varphi \psi\rangle, 
		\qquad (\varphi, \psi \in B(H), f \in L^1(G)).$$
		In particular, pointwise multiplication  in $ B(H) $ is weak$ ^* $-separately continuous.
	\end{lemma}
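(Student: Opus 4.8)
The plan is to define the left action by duality, compute it explicitly on the dense subalgebra $\{\lambda(f):f\in L^1(H)\}$, and then use the resulting norm estimate together with density to check that the action does not leave $C^*(H)$. The bimodule structure is then immediate from commutativity of $B(H)$, and the separate weak$^*$-continuity drops out at the end. For fixed $\varphi\in B(H)$ and $T\in C^*(H)$, the assignment $\psi\mapsto\langle T,\varphi\psi\rangle$ is a bounded linear functional on $B(H)$ with $|\langle T,\varphi\psi\rangle|\le\|T\|\,\|\varphi\|\,\|\psi\|$, since $B(H)$ is a Banach algebra; hence $\langle\varphi\cdot T,\psi\rangle=\langle T,\varphi\psi\rangle$ defines $\varphi\cdot T\in B(H)^*=C^*(H)^{**}$ with $\|\varphi\cdot T\|\le\|\varphi\|\,\|T\|$. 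The only genuine point is that $\varphi\cdot T$ lands in the canonical copy of $C^*(H)$ inside $C^*(H)^{**}$.

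I would verify this first on $\lambda(L^1(H))$. We may assume $f$ is $\pi$-radial, replacing $f$ by $\pi(f)$ since $\lambda(f)=\lambda(\pi(f))$. Recall that for $f\in L^1(H)$ and $\psi\in B(H)=B_\pi(G)$ the duality is implemented by integration against Haar measure, $\langle\lambda(f),\psi\rangle=\int_H\psi(\dot x)\,f(\dot x)\,d\dot x$. Since $\varphi\in B_\pi(G)\subseteq C_b(G)$ is $\pi$-radial and $f$ is $\pi$-radial, the pointwise product $\varphi f$ is again $\pi$-radial: this follows from the identity $\pi(\pi(g)h)=\pi(g)\pi(h)$ of the spherical projector, extended from $C_c(G)$ to $g=\varphi$, $h=f$ by continuity, which gives $\pi(\varphi f)=\pi(\pi(\varphi)f)=\pi(\varphi)\pi(f)=\varphi f$. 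As $\|\varphi f\|_{L^1(H)}\le\|\varphi\|_\infty\|f\|_{L^1(H)}\le\|\varphi\|_{B(H)}\|f\|_{L^1(H)}$, we conclude $\varphi f\in L^1(H)$, and then for every $\psi\in B(H)$,
$$\langle\varphi\cdot\lambda(f),\psi\rangle=\int_H(\varphi\psi)(\dot x)\,f(\dot x)\,d\dot x=\int_H\psi(\dot x)\,(\varphi f)(\dot x)\,d\dot x=\langle\lambda(\varphi f),\psi\rangle,$$
that is, $\varphi\cdot\lambda(f)=\lambda(\varphi f)\in C^*(H)$.

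The passage to a general $T$ is then routine. Choosing $f_n\in L^1(H)$ with $\lambda(f_n)\to T$ in $C^*(H)$, the estimate $\|\lambda(\varphi f_n)-\lambda(\varphi f_m)\|=\|\varphi\cdot(\lambda(f_n)-\lambda(f_m))\|\le\|\varphi\|\,\|\lambda(f_n)-\lambda(f_m)\|$ shows that $\lambda(\varphi f_n)$ is Cauchy in the closed subspace $C^*(H)$, hence converges to some $S\in C^*(H)$, while $\|\varphi\cdot T-\varphi\cdot\lambda(f_n)\|\le\|\varphi\|\,\|T-\lambda(f_n)\|$ forces $\varphi\cdot T=S\in C^*(H)$. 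Thus $T\mapsto\varphi\cdot T$ maps $C^*(H)$ into itself with $\|\varphi\cdot T\|\le\|\varphi\|\,\|T\|$, and the associativity $(\varphi\psi)\cdot T=\varphi\cdot(\psi\cdot T)$ is read off directly from the defining formula together with associativity of multiplication in $B(H)$; since $B(H)$ is commutative, declaring $T\cdot\varphi:=\varphi\cdot T$ makes $C^*(H)$ a Banach $B(H)$-bimodule.

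Finally, for the last assertion I would fix $\varphi\in B(H)$ and note that for every $T\in C^*(H)$ one has $\langle\varphi\psi,T\rangle=\langle\varphi\cdot T,\psi\rangle$ with $\varphi\cdot T\in C^*(H)$ fixed, so $\psi\mapsto\langle\varphi\psi,T\rangle$ is $\sigma(B(H),C^*(H))$-continuous; hence $\psi\mapsto\varphi\psi$ is weak$^*$-continuous, and by commutativity the same holds in the other variable. I expect the real obstacle to be the middle step, namely verifying that $\varphi f$ stays in $L^1(H)$ (the $\pi$-radiality of the pointwise product, requiring the extension of the projector identity beyond $C_c(G)$) and that this yields $\varphi\cdot\lambda(f)=\lambda(\varphi f)\in C^*(H)$; everything else is formal Banach-algebra duality together with the commutativity of $B(H)$.
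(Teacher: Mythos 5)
Your proposal is correct and follows essentially the same route as the paper: compute $\langle\varphi\cdot\lambda(f),\psi\rangle=\int_H f\varphi\psi\,d\lambda=\langle\lambda(\varphi f),\psi\rangle$ so that $\varphi\cdot\lambda(f)=\lambda(\varphi f)\in C^*(H)$, then extend by density of $\lambda(L^1(H))$ in $C^*(H)$. The extra care you take over the $\pi$-radiality of $\varphi f$ and the explicit derivation of weak$^*$-separate continuity are details the paper leaves implicit, but they do not change the argument.
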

	\begin{proof}
		For each $ \varphi \in B(H) $ and $ f \in L^1(H) $, we have $$\langle\varphi \cdot \lambda(f) , \psi\rangle
		=\langle \lambda(f) , \varphi \psi\rangle =\int_H f(\dot x) \varphi(\dot x) \psi(\dot x) d \dot x =\langle \lambda( \varphi f) , \psi\rangle.$$ Thus $ \varphi \cdot \lambda(f)  \in C^*(H). $
		Now, the conclusion follows from the density of $L^1(H)$ inside $C^*(H)$ 
	\end{proof}
	
	\subsection{Commutative Banach algebras} Let $\mA$  be a regular, semisimple, commutative Banach algebra with the Gelfand structure
	space $ \Delta(\mA) $. For a closed ideal $I$ of $\mA$, the zero set of $I$, denoted $Z(I )$, is a closed subset of  $ \Delta(\mA) $
	given by
	$$Z(I ) =\{x \in \Delta(\mA): \widehat a(x)=0 ~{\text{for all } a \in I}\}. $$
	Associated to each closed subset $E$ of  $\Delta(\mA),$  we
	define the following  distinguished ideals  with zero set equal to $E$:
	\begin{align*}
		k_\mA(E)
		&=\{a \in \mA:  \widehat a (x)=0 ~ {\text{for all } x \in E} \},\\
		j_\mA(E)
		&=\{a \in \mA: \widehat a ~ \text{has compact support disjoint from}~E \},\\
		J_\mA(E)
		&=\overline{j_\mA(E)}.
	\end{align*}
	Note that, for each ideal $ I $ of  $ \mA $ with zero set $ E $, we have $j_\mA(E) \subseteq I \subseteq  k_\mA(E) $(cf. Kaniuth \cite[Theorem 5.1.6]{kan09}).
	Recall that a closed subset $E$ of $ \Delta(\mA) $ is a {\it set of  synthesis} or {\it spectral set } if
	$k_\mA(E) = J_\mA(E)  $;  that is, $ k_\mA(E) $ is the only closed ideal with zero set  equal to $E$.
	
	\begin{definition}
		An element $a$ of a Banach algebra $\mA$ is said to be {\it power
			bounded} if 
		$ \sup_{n \in \mathbb{N}} \Vert a^n\Vert<\infty $.
		The Banach algebra $\mA  $ is said to have the {\it power boundedness property} if  any $ a\in A $ with $ r(a) =\inf_{n \in \mathbb{N}} \Vert a^n\Vert^{\frac{1}{n}}\leq1,$ is power bounded.
	\end{definition}
	 
	It is clear that if $ a \in \mA $ is power bounded, then $ r(a) \leq 1. $

	\section{Power bounded elements and Coset ring}
	In this section, we study two sets (defined below) associated to an element of $B(H).$ This section is motivated by some results in \cite{klu10,klu11,mus19}.
	
	\subsection{Coset ring and power bounded elements}
	 We shall begin by defining those two sets.
	\begin{definition}
		Let $ u \in B(H) $.  Let   $  \dot E_u$ and $ \dot F_u $ denote the sets in $ H $ associated to $ u $ given by
		$$ \dot E_u=\{\dot x\in H: \vert u(\dot x)\vert = 1\} \qquad \text {and} \qquad \dot F_u=  \{\dot x\in H:  u(\dot x) = 1\}.$$ Also, associated to $ u $ are the sets $ E_u $ and $ F_u $ in $ G $ defined by $$ E_u=p^{-1}(\dot E_u ) \qquad \text {and} \qquad  F_u= p^{-1}( \dot F_u).$$
	\end{definition}
	
	\begin{rem} \label{rem1}
		Let $ u \in B(H) $ and $ \tilde u $ be the element in $ B(G)  $ associated with $ u $. Then $$\{ x \in G: \tilde u  (x)=1 \} = p^{-1}( \dot F_u)= F_u,$$ and $$\{ x \in G: \vert \tilde  u  (x)\vert=1 \} = p^{-1}( \dot E_u)= E_u.$$
	\end{rem}
	The following theorem generalizes  \cite [Theorem 4.1]{klu10} to the context of ultraspherical hypergroups. Also, the proof of this is a direct consequence of Remark \ref{rem1} and \cite[Theorem 4.1]{klu10}.
	
	\begin{lemma} \label{thm3}
		Let $ H $ be any locally compact ultraspherical hypergroup and let  $ u \in B(H) $ be power bounded. Then the sets $ \dot E_u $ and $\dot F_u $ are in 
		$ \sR_c(H) $.
	\end{lemma}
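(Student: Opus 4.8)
The plan is to reduce the statement to its group-level counterpart, namely \cite[Theorem 4.1]{klu10}, by exploiting the isometric identification $B(H)=B_\pi(G)$ and then transporting the conclusion back to $H$ through the quotient map $p$. The first step is to record that the correspondence $u\mapsto\tilde u$ is not merely isometric but also multiplicative: since the product in $B(H)$ is pointwise and $\tilde u(x)=u(\dot x)$ for all $x\in G$, we have $\widetilde{uv}=\tilde u\,\tilde v$ for all $u,v\in B(H)$, so that $B(H)$ is realized as the subalgebra $B_\pi(G)$ of $B(G)$. In particular $\widetilde{u^n}=(\tilde u)^n$ and $\Vert u^n\Vert_{B(H)}=\Vert(\tilde u)^n\Vert_{B(G)}$ for every $n\in\mathbb{N}$.

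Given this, the second step is immediate. If $u\in B(H)$ is power bounded, then
$$\sup_{n\in\mathbb{N}}\Vert(\tilde u)^n\Vert_{B(G)}=\sup_{n\in\mathbb{N}}\Vert u^n\Vert_{B(H)}<\infty,$$
so $\tilde u$ is a power bounded element of $B(G)$. I would then invoke \cite[Theorem 4.1]{klu10} directly for $\tilde u$: it yields that both $\{x\in G:\vert\tilde u(x)\vert=1\}$ and $\{x\in G:\tilde u(x)=1\}$ belong to the closed coset ring $\sR_c(G)$.

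The third and final step is to translate these membership statements into the hypergroup setting. By Remark \ref{rem1} we have the identifications
$$\{x\in G:\vert\tilde u(x)\vert=1\}=p^{-1}(\dot E_u)\qquad\text{and}\qquad\{x\in G:\tilde u(x)=1\}=p^{-1}(\dot F_u).$$
Hence $p^{-1}(\dot E_u)$ and $p^{-1}(\dot F_u)$ lie in $\sR_c(G)$, which is exactly the defining condition for $\dot E_u,\dot F_u\in\sR_c(H)$; this completes the argument.

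As the paper already signals, there is no serious obstacle here, since the whole analytic content sits in \cite[Theorem 4.1]{klu10}. The one point that genuinely requires care is the verification that power boundedness passes from $u$ to $\tilde u$, that is, that $u\mapsto\tilde u$ is a multiplicative isometry so that powers and their norms are simultaneously preserved; once that is in hand, the remainder is a mechanical unwinding of the definition of $\sR_c(H)$ through Remark \ref{rem1}.
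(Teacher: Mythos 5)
Your proposal is correct and follows exactly the route the paper intends: the paper states that the result "is a direct consequence of Remark \ref{rem1} and \cite[Theorem 4.1]{klu10}", and you have simply filled in the details of that reduction (the multiplicative isometry $u\mapsto\tilde u$ transferring power boundedness, the group-level theorem, and the unwinding of the definition of $\sR_c(H)$ via Remark \ref{rem1}). No gaps; your write-up is just a more explicit version of the argument the authors omit.
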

	
	The proof of our next result follows the same lines as in  \cite[Proposition 3.6]{dkl14} and hence we omit it.
	\begin{lemma} \label{lem8}
		Let $ K $ be a closed subhypergroup of an unltraspherical hypergroup $ H $. Then for any $ u \in B(K) $, there exists $ v \in B(H)$   such that $ v|_K = u $ and $ \Vert v \Vert_{B(H)} = \Vert u \Vert_{B(K)} $.
	\end{lemma}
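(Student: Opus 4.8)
The plan is to transfer the problem to the ambient group $G$, invoke the classical norm-preserving extension theorem for Fourier--Stieltjes algebras of locally compact groups, and then \emph{radialize} the resulting extension so that it lands in $B(H)=B_\pi(G)$. The whole argument rests on the isometric identifications $\Vert\,\cdot\,\Vert_{B(H)}=\Vert\,\cdot\,\Vert_{B(G)}$ and its analogue for $K$, together with the fact that $\pi$ acts compatibly on the subobjects.

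First I would pin down the structure of $K$. Since $K$ is a closed subhypergroup of $H$, its full preimage $L:=p^{-1}(K)$ is an $\mathcal O$-saturated closed subgroup of $G$ (it contains $e$, is closed under inverses by the involution of $H$, and under products because $\supp(\delta_{\dot x}\ast\delta_{\dot y})\subseteq K$ forces $\dot{xy}\in K$). The spherical projector $\pi$ then restricts to a spherical projector $\pi_L$ on $C_c(L)$, so that $K$ is the ultraspherical hypergroup associated with $(L,\pi_L)$ and $B(K)=B_{\pi_L}(L)=\{w\in B(L): w \text{ is } \pi_L\text{-radial}\}$. Reading $u$ through this identification yields $\tilde u\in B_{\pi_L}(L)\subseteq B(L)$ with $\Vert\tilde u\Vert_{B(L)}=\Vert u\Vert_{B(K)}$.

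Next I would apply the classical group extension theorem to the closed subgroup $L\le G$, producing $w\in B(G)$ with $w|_L=\tilde u$ and $\Vert w\Vert_{B(G)}=\Vert\tilde u\Vert_{B(L)}$. In general $w$ need not be $\pi$-radial, so the decisive step is radialization: let $P\colon B(G)\to B(G)$ denote the contractive map induced by $\pi$, whose range is exactly $B_\pi(G)=B(H)$, and set $v:=P(w)$. Then $v\in B(H)$ and $\Vert v\Vert_{B(H)}\le\Vert w\Vert_{B(G)}=\Vert u\Vert_{B(K)}$. To check that $v$ still extends $u$ I would exploit that $L$ is $\mathcal O$-saturated, so that for $x\in L$ the orbit $\mathcal O_x$ lies inside $L$; hence $(Pw)(x)$ is the orbit-average of $w|_L=\tilde u$ over $\mathcal O_x$, that is $(Pw)|_L=\pi_L(\tilde u)=\tilde u$, because $\tilde u$ is already $\pi_L$-radial. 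Thus $v|_K=u$. The norm equality is then automatic: restriction $B(H)\to B(K)$ is contractive (being the restriction $B(G)\to B(L)$ read through the above isometric identifications), so $\Vert u\Vert_{B(K)}=\Vert v|_K\Vert_{B(K)}\le\Vert v\Vert_{B(H)}\le\Vert u\Vert_{B(K)}$, forcing equality.

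The main obstacle is precisely the radialization step. One must verify that $\pi$ induces a genuinely contractive projection of $B(G)$ onto $B_\pi(G)$ and, more importantly, that this projection is \emph{local over the orbits}, so that it commutes with restriction to the saturated subgroup $L$ and hence with passage from $\pi$ to $\pi_L$. Granting the compatibility of $\pi$ with $\pi_L$ on $L$, everything else is bookkeeping with the isometric identifications built into the definitions of $B(H)$ and $B(K)$. (An alternative route avoids $P$ altogether: one shows that the adjoint of restriction is an isometric embedding $C^*(K)\hookrightarrow C^*(H)$ and then extends the functional $u\in C^*(K)^*=B(K)$ by Hahn--Banach; but verifying that embedding is isometric again hinges on the same orbit-level compatibility, so it is not obviously easier.)
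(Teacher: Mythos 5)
Your overall strategy --- pull everything back to $G$ via $L:=p^{-1}(K)$, extend at the group level, then radialize --- is the natural one, and the step you single out as the main obstacle (that $\pi$ induces a contractive projection of $B(G)$ onto $B_\pi(G)$ which is local over the orbits) is in fact the unproblematic part: this is built into Muruganandam's framework, and locality follows from $\supp\,\pi^*(\delta_x)=\mathcal O_x\subseteq L$ for $x\in L$ once $L$ is saturated. The genuine gap is the step you dismiss as classical. There is no ``classical norm-preserving extension theorem for Fourier--Stieltjes algebras'' of an arbitrary closed subgroup: the restriction map $B(G)\to B(L)$ is surjective (with norm-preserving lifts) when $L$ is open, compact, or normal (Cowling--Rodway), but it fails for general closed subgroups. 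A concrete counterexample: take $G=SL(2,\mathbb R)$ and $L=N$ the upper-triangular unipotent subgroup. Since $G$ has no nontrivial finite-dimensional unitary representations, every $u\in B(G)$ is a constant plus a coefficient of a representation with no invariant vectors, hence by the Howe--Moore vanishing theorem $B(G)\subseteq\mathbb C\cdot 1+C_0(G)$; restricting to $N\cong\mathbb R$ shows that a nontrivial character of $N$, which has norm one in $B(N)$, admits no extension to $B(G)$ whatsoever. So the element $w\in B(G)$ your argument feeds into the radialization step need not exist, and everything downstream collapses. (Herz's restriction theorem, which does hold for all closed subgroups, is a statement about $A$, not about $B$.)

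Note moreover that this is not a defect you can repair by pure bookkeeping: the identity map on $C_c(G)$ is itself a spherical projector (all orbits are singletons), under which $H=G$, $B(H)=B(G)$, and closed subhypergroups are exactly closed subgroups, so in that degenerate case the statement of the lemma is literally the group-level extension claim that the example above refutes. Any correct proof must therefore use something more specific about the situation --- in the paper the proof is delegated to \cite[Proposition 3.6]{dkl14}, and you should check the exact hypotheses and mechanism there (e.g.\ whether $p^{-1}(K)$ is assumed open, or whether the extension is produced by a different device than surjectivity of $B(G)\to B(L)$) rather than appealing to a general extension theorem that does not exist. Your reduction of $K$ to the pair $(L,\pi_L)$ and the final squeeze $\Vert u\Vert_{B(K)}\le\Vert v\Vert_{B(H)}\le\Vert u\Vert_{B(K)}$ are fine as far as they go, but they rest entirely on the missing extension step.
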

	Our next lemma says that we cannot replace in Lemma \ref{thm3}, the weaker condition that $\|u\|=1.$ As the proof of this follows similar to \cite[Lemma 4.3]{klu10}, we omit it.
	
	\begin{lemma} \label{lem6}
		Let $ \dot E $ be a closed subset of $ H $ such that $ H\setminus \dot E $ is $ \sigma $-compact. Then there
		exists $ u \in B(H) $ with $ \Vert u\Vert_\infty =1 $ and $ \dot F_u=\dot E $.
	\end{lemma}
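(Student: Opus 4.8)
The plan is to build $u$ by hand in the form $u = 1 - w$, where $w$ is a nonnegative element of $A(H)$ that vanishes exactly on $\dot E$. We may assume $\dot E \neq \emptyset$, which is the case of interest. First I would exploit the $\sigma$-compactness of $H \setminus \dot E$ to write
$H \setminus \dot E = \bigcup_{n=1}^{\infty} C_n$ with each $C_n$ compact and $C_n \subseteq C_{n+1}$. For each $n$, the compact set $C_n$ is disjoint from the closed set $\dot E$, so by local compactness of $H$ there is a relatively compact open set $U_n$ with $C_n \subseteq U_n$ and $\overline{U_n} \cap \dot E = \emptyset$. Since $A(H)$ is regular and Tauberian by \cite[Theorem 3.13]{mur08}, I would then invoke the associated normality-type property to produce $v_n \in A(H)$ with $0 \le v_n \le 1$, with $v_n \equiv 1$ on $C_n$, and with $\supp v_n \subseteq U_n$, so that in particular $v_n$ vanishes identically on $\dot E$.

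Next I would fix positive scalars $a_n$ with $\sum_n a_n \le 1$ and $\sum_n a_n \Vert v_n \Vert_{A(H)} < \infty$; the choice $a_n = 2^{-n}\bigl(1 + \Vert v_n \Vert_{A(H)}\bigr)^{-1}$ satisfies both. Setting $w = \sum_n a_n v_n$, the second summability condition forces the series to converge in the norm of $A(H)$, so $w \in A(H) \subseteq B(H)$, while the first condition together with $0 \le v_n \le 1$ gives $0 \le w \le 1$ pointwise. Because the constant function $1$ belongs to $B(H)$, the element $u = 1 - w$ lies in $B(H)$ and is automatically $\pi$-radial, being constructed inside $B(H)$. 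For the verification, note $0 \le w \le 1$ yields $0 \le u \le 1$, so $\vert u \vert \le 1$ everywhere and $\Vert u \Vert_\infty \le 1$. If $\dot x \in \dot E$ then every $v_n(\dot x) = 0$, hence $w(\dot x) = 0$ and $u(\dot x) = 1$; this simultaneously shows $\dot E \subseteq \dot F_u$ and $\Vert u \Vert_\infty = 1$. Conversely, if $\dot x \notin \dot E$ then $\dot x \in C_n$ for some $n$, so $w(\dot x) \ge a_n v_n(\dot x) = a_n > 0$ and thus $u(\dot x) = 1 - w(\dot x) < 1$, giving $\dot x \notin \dot F_u$. Therefore $\dot F_u = \dot E$, as required.

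The step I expect to demand the most care is the construction of the cutoff functions $v_n$: what is needed is not bare regularity but the strong (Urysohn/normality) form of it, producing a function that is identically $1$ on a compact set, supported inside a prescribed open neighbourhood, and bounded between $0$ and $1$. One clean way to secure the nonnegativity and the bound is to take $v_n = g_n^{2}$ for a real $g_n \in A(H)$ with $\vert g_n \vert \le 1$, $g_n \equiv 1$ on $C_n$ and $\supp g_n \subseteq U_n$, using that $A(H)$ is a Banach algebra closed under pointwise products. The only other point to pin down is that norm convergence in $A(H)$ entails convergence in $B(H)$; this follows because the inclusion $A(H) \hookrightarrow B(H)$ is norm-decreasing (indeed the two norms agree on $A(H)$, since $A(H) \subseteq B_\lambda(H)$ isometrically and $B_\lambda(H) \hookrightarrow B(H)$ isometrically via the dual of the quotient map $C^*(H) \twoheadrightarrow C^*_\lambda(H)$). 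Granting these two facts, the legitimacy of the series defining $w$ is immediate and the remainder of the argument is the routine pointwise verification above.
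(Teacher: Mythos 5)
Your construction is correct and is essentially the proof the paper intends: the paper omits the argument, deferring to \cite[Lemma 4.3]{klu10}, which proceeds exactly as you do --- exhaust $H\setminus\dot E$ by compacta $C_n$, choose $[0,1]$-valued functions $v_n\in A(H)$ equal to $1$ on $C_n$ and vanishing on $\dot E$, and set $u=1-\sum_n a_n v_n$ with weights forcing norm convergence. The only points worth recording are the trivial case $\dot E=\emptyset$ (handled, e.g., by $u=-1$) and the existence of the $[0,1]$-valued cutoffs, which the paper itself uses without comment (see Proposition \ref{por4}) and which follows from the $\frac{1}{\lambda(V)}\1_{C_n\ast V}\ast\1_{\check V}$ construction underlying Lemma \ref{func1}.
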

	
	Here is an analogue of the Host's idempotent theorem.
	\begin{lemma} \label{lem7}
		Let $u$ be a complex-valued functions on $H$. Then    $ u $ is an idempotent of $ B(H) $  if and only if $ u=\1_{\dot E} $ for some $ \dot E \in \sR_c(H) $.
	\end{lemma}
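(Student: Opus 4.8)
The plan is to transfer Host's idempotent theorem from the underlying group $G$ to $H$ using the identification $B(H)=B_\pi(G)$ recalled in the preliminaries. First observe that an idempotent of $B(H)$ is automatically an indicator function: since multiplication in $B(H)$ is pointwise, $u^2=u$ forces $u(\dot x)^2=u(\dot x)$, so $u(\dot x)\in\{0,1\}$ and $u=\1_{\dot E}$ with $\dot E=\dot F_u=\{\dot x\in H:u(\dot x)=1\}$. Hence in both directions the only genuine content is to match membership $\dot E\in\sR_c(H)$ with idempotency in $B(H)$. The structural bridge I would use is the description of $\pi$-radial functions: since $p(x)=\dot x$ has fibers $p^{-1}(p(x))=\mathcal O_x=\supp(\pi^*(\delta_x))$, a function in $B(G)$ is $\pi$-radial exactly when it is constant on these fibers, i.e. when it factors as $v\circ p$ for some function $v$ on $H$. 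This is precisely the meaning of $B(H)=B_\pi(G)$, under which $u\leftrightarrow\tilde u=u\circ p$.

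For the forward implication, suppose $u\in B(H)$ is idempotent. Then $\tilde u\in B(G)$ and $\tilde u^2=\tilde u$ pointwise on $G$, so $\tilde u$ is an idempotent of $B(G)$. By Host's idempotent theorem, $\tilde u=\1_E$ for some $E\in\sR_c(G)$. By Remark \ref{rem1}, $E=\{x\in G:\tilde u(x)=1\}=p^{-1}(\dot F_u)$, and since $E\in\sR_c(G)$ the definition of $\sR_c(H)$ immediately gives $\dot E=\dot F_u\in\sR_c(H)$. As $u=\1_{\dot F_u}$, this is the desired conclusion.

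For the converse, let $\dot E\in\sR_c(H)$ and set $E=p^{-1}(\dot E)$, so that $E\in\sR_c(G)$ by definition; Host's theorem then gives $\1_E\in B(G)$. The key point is that $E$, being a $p$-preimage, is a union of fibers of $p$, so $\1_E=\1_{\dot E}\circ p$ is constant on the orbits $\mathcal O_x$ and is therefore $\pi$-radial. (Explicitly, $\pi(\1_E)(x)=\pi^*(\delta_x)(E)$, which equals the full mass of $\pi^*(\delta_x)$ when $x\in E$, since then $\mathcal O_x\subseteq E$, and equals $0$ when $x\notin E$, since then $\mathcal O_x\cap E=\emptyset$; thus $\pi(\1_E)=\1_E$.) Consequently $\1_E\in B_\pi(G)=B(H)$, meaning $u=\1_{\dot E}\in B(H)$, and $u^2=u$ shows $u$ is idempotent.

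The main obstacle, and the only place where the hypergroup structure genuinely intervenes, is the verification that $\pi$-radiality corresponds exactly to being constant on the fibers $\mathcal O_x$ of $p$ (equivalently, to factoring through $p$). Once this correspondence is in place, the lemma is a clean transport of Host's theorem from $G$ to $H$ via the definition of $\sR_c(H)$ and the identification $B(H)=B_\pi(G)$.
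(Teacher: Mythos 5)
Your proof is correct and follows essentially the same route as the paper: both directions reduce to Host's idempotent theorem on $G$ via the identification $B(H)=B_\pi(G)$, with the converse hinging on the observation that $\1_{p^{-1}(\dot E)}$ is constant on the orbits $\mathcal O_x$ and hence $\pi$-radial. Your explicit verification that $\pi(\1_E)=\1_E$ via $\pi^*(\delta_x)$ only spells out a step the paper declares ``clear.''
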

	\begin{proof}
		Let $u$ be an idempotent element of $B(H).$ Then $\tilde u$ is an idempotent element of $B(G)$ and hence by Host's idempotent theorem \cite{hos86}, there exists $E \in \sR_c(G)$ such that $\tilde u = \1_E.$ Now, it is plain to show that $ E={p^{-1}(\dot E_u)}$. Hence $\dot E_u\in \sR_c(H)$ and $u=\1_{\dot E_u}$. 
		
		For the converse, let  $ u=\1_{\dot E} $ for some $ \dot E \in \sR_c(H) $. Then, by Host's idempotent theorem \cite{hos86}, $ v=\1_{p^{-1}(\dot E)} $ is an idempotent in $B(G)$. It is clear that $v $ is constant on each orbit $\mathcal{O}_x$ and hence $v \in B_\pi(G) $. Now, since for each $x \in G$ we have $v(x) = u(\dot x)$, thus $u$ is an idempotent in $B(H)$.
	\end{proof}
	
	\begin{rem} \label{rem2}
		Since elements of $B(H)$ are continuous functions, it is clear that an idempotent of $B(H)$ has to be of the form $\1_{\dot F}$ where $\dot F$ is a clopen subset of $ H.$
	\end{rem}
	As an immediate consequence of Lemma \ref{lem7} and Lemma \ref{thm3},  we obtain the following corollary.
	
	\begin{corollary}\label{cor3}
		Let $ H $ be an  ultraspherical hypergroup. For any subset $ \dot E $ of $ H $, the following are equivalent.
	
			\item[(i)] $ \dot E \in \sR_c(H) $;
			\item[(ii)] $ \dot E = \dot F_u $ for some idempotent $ u \in B(H) $;
			\item[(iii)] $ \dot E =\dot  F_u $ for some power bounded $ u \in B(H).$
	\end{corollary}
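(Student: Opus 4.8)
The plan is to establish the cyclic chain of implications (i) $\Rightarrow$ (ii) $\Rightarrow$ (iii) $\Rightarrow$ (i), each link following immediately from the two preceding lemmas together with the elementary fact that idempotents are automatically power bounded. No step requires genuine new work; the corollary is essentially a bookkeeping assembly of the machinery already in place.

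For (i) $\Rightarrow$ (ii), I would take $\dot E \in \sR_c(H)$ and invoke Lemma \ref{lem7} to produce the idempotent $u = \1_{\dot E}$ in $B(H)$. Reading off the definition of $\dot F_u = \{\dot x \in H : u(\dot x) = 1\}$, and using that $u$ is $\{0,1\}$-valued, one has $u(\dot x) = 1$ precisely when $\dot x \in \dot E$, so $\dot F_u = \dot E$, which is exactly (ii). The only point deserving a moment's care here is that $\dot F_u$ (not $\dot E_u$) is the set appearing in the equivalence; for an indicator function the two coincide, so this is immediate.

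For (ii) $\Rightarrow$ (iii), the key observation is that any idempotent $u$ satisfies $u^n = u$ for every $n \in \mathbb{N}$, whence $\sup_{n} \Vert u^n \Vert = \Vert u \Vert < \infty$, so $u$ is power bounded. Thus the same $u$ realizing (ii) already realizes (iii), with $\dot E = \dot F_u$ unchanged. Finally, for (iii) $\Rightarrow$ (i), I would simply apply Lemma \ref{thm3}: if $\dot E = \dot F_u$ for some power bounded $u \in B(H)$, then that lemma guarantees $\dot F_u \in \sR_c(H)$, i.e.\ $\dot E \in \sR_c(H)$. Since each implication reduces to a direct citation of Lemma \ref{lem7}, Lemma \ref{thm3}, or the stability of idempotents under taking powers, there is no real obstacle to overcome.
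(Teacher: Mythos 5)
Your proof is correct and follows exactly the route the paper intends: the paper states the corollary as ``an immediate consequence of Lemma \ref{lem7} and Lemma \ref{thm3}'' without writing out the details, and your cycle (i) $\Rightarrow$ (ii) via Lemma \ref{lem7}, (ii) $\Rightarrow$ (iii) via $u^n = u$, and (iii) $\Rightarrow$ (i) via Lemma \ref{thm3} is precisely that argument made explicit.
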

	In the next corollary, we show that the interior and boundary of a set from the coset ring also belongs to the coset ring.
	\begin{corollary}\label{lem9} 
		Let $ H $ be an ultraspherical hypergroup associated to an amenable locally compact group $ G $ and let $ \dot E \in \sR_c(H) $.
			\item[(i)] The interior $ \dot E^\circ $ and the boundary $\partial \dot E $ of $ \dot E$ both belong to $\sR_c(H).$
			\item[(ii)] $\dot E^\circ = \dot F_u  $ for some power bounded $ u \in B(H).$
			\item[(iii)] $ \partial \dot E = \dot F_u $ for some power bounded $ u \in B(H) $.    
	\end{corollary}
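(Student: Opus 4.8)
The plan is to transport everything down from the group $G$ through the canonical projection $p\colon G\to H$, and then to read off the required power bounded representatives directly from Corollary \ref{cor3}. The only genuine analytic input will be the corresponding statement for $G$; the rest is a topological bookkeeping of $p$.

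First I would record the behaviour of $p$. The map $p$ is a continuous open surjection, a fact built into the construction of the quotient hypergroup $H$ (cf. \cite{mur08, jew75}); openness is the decisive point. For a continuous open surjection one has, for every $\dot E\subseteq H$,
\[
p^{-1}(\dot E^{\circ})=(p^{-1}(\dot E))^{\circ}
\qquad\text{and}\qquad
p^{-1}(\partial \dot E)=\partial(p^{-1}(\dot E)).
\]
Indeed, $p^{-1}(\dot E^{\circ})$ is open and contained in $p^{-1}(\dot E)$, hence sits inside the interior; conversely, if $U=(p^{-1}(\dot E))^{\circ}$ then $p(U)$ is open and, by surjectivity, $p(U)\subseteq \dot E$, so $p(U)\subseteq \dot E^{\circ}$ and $U\subseteq p^{-1}(\dot E^{\circ})$. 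The boundary identity then follows since $\dot E$ is closed, giving $\partial\dot E=\dot E\setminus\dot E^{\circ}$, and $p^{-1}$ commutes with set differences.

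Next I would prove (i). Write $E=p^{-1}(\dot E)$, so $E\in\sR_c(G)$ by the very definition of $\sR_c(H)$. Since $G$ is amenable, the group analogue of the present statement---that the interior of a member of $\sR_c(G)$ is again a (closed) member of $\sR_c(G)$---is available from \cite{klu10, klu11}. Thus $E^{\circ}\in\sR_c(G)$, and by the first displayed identity $p^{-1}(\dot E^{\circ})=E^{\circ}\in\sR_c(G)$, which by definition says exactly that $\dot E^{\circ}\in\sR_c(H)$. For the boundary it is then cleanest to avoid a second appeal to $G$: since $\sR(H)=\{\dot A\subseteq H: p^{-1}(\dot A)\in\sR(G)\}$ is a Boolean algebra (because $p^{-1}$ respects complements, unions and intersections and $\sR(G)$ is one), the set $\partial\dot E=\dot E\setminus\dot E^{\circ}=\dot E\cap(\dot E^{\circ})^{c}$ lies in $\sR(H)$, and it is closed, hence $\partial\dot E\in\sR_c(H)$. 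This establishes (i).

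Finally, (ii) and (iii) are immediate from (i) together with Corollary \ref{cor3}. Applying the implication (i)$\Rightarrow$(iii) of that corollary to the set $\dot E^{\circ}\in\sR_c(H)$ produces a power bounded $u\in B(H)$ with $\dot F_u=\dot E^{\circ}$, and applying it to $\partial\dot E\in\sR_c(H)$ produces a power bounded $u\in B(H)$ with $\dot F_u=\partial\dot E$. The step I expect to be the real obstacle is the group-level interior result feeding into (i): that the interior of a closed coset-ring set is again a closed coset-ring set rests on the structure theory of the coset ring (in particular, that a coset with non-empty interior is a coset of an open subgroup), and this is where the amenability hypothesis on $G$ is consumed through \cite{klu10, klu11}. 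By contrast, the openness of $p$, the $p^{-1}$-correspondence for interior and boundary, and the passage to power bounded $\pi$-radial representatives via Corollary \ref{cor3} are all routine once that input is in place.
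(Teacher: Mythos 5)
Your proposal is correct and follows essentially the same route as the paper: reduce (ii) and (iii) to (i) via Corollary \ref{cor3}, use the openness of the quotient map $p$ to identify $p^{-1}(\dot E^{\circ})$ with $p^{-1}(\dot E)^{\circ}$ and $p^{-1}(\partial\dot E)$ with $\partial p^{-1}(\dot E)$, and invoke the known group-level fact that interiors (and boundaries) of sets in $\sR_c(G)$ stay in $\sR_c(G)$ (the paper cites \cite[Lemma 1.1]{klu14} for both at once, where you instead handle the boundary by the Boolean-ring structure of $\sR(H)$ --- an equally valid, equally short variation).
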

	\begin{proof}
		By Corollary \ref{cor3}, we only have to show (i). Since the quotient map $ p:G\rightarrow H $ is open, we have $ p^{-1}(\dot E^\circ) =p^{-1}(\dot E)^\circ $ and $ p^{-1}(\partial\dot E) =\partial p^{-1}( \dot E) $.  Now since  $ p^{-1}(\dot E)  \in \sR_c(G)$,  by \cite [Lemma 1.1]{klu14}, $p^{-1}(\dot E)^\circ  $ and $ \partial p^{-1}( \dot E)  $ both belong to $ \sR_c(G).$ In other words, the sets $\dot E^\circ$ and  $\partial \dot E$ are in $\sR_c(H).$ 
	\end{proof}

	\subsection{On the sequence \texorpdfstring{$\{u^n\}_{n\in\mathbb{N}}$}{}}
	We now study the sequence $\{u^n\}_{n\in\mathbb{N}},$ where $u$ is a power bounded element of $B(H).$
	
	Our first lemma gives the properties of the weak*-limit of the sequence $\{u^n\}_{n\in\mathbb{N}},$ whenever it exists.
	\begin{lemma} \label{lem3}
		Let $H$ be an ultraspherical hypergroup associated to a locally compact group $G$. Let $ u \in B(H) $  and suppose that 
		$\theta =\wsn\ u^n $
		exists. Then 
			\item[(i)] $ \theta $ is an idempotent.
			\item[(ii)] $\theta$ satisfies $ \theta u =\theta$ and $\theta = {\bf 1}_{\dot{F}_{u}^\circ}.$
			\item[(iii)]  the set $\dot{F}_{u}^\circ $ is closed in $G$ and $\lambda_H(\dot{F}_u \setminus \dot{F}_{u}^\circ)=0$. 
	\end{lemma}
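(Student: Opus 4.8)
The plan is to deduce the algebraic properties of $\theta$ from the weak$^*$-separate continuity of the product, to recognise $\theta$ as the indicator function of a clopen set via the idempotent theorem, and then to locate that set by pairing the relation $u^n\to\theta$ with $L^1$-functions and exploiting that the Haar measure of $H$ charges every nonempty open set.

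First I would establish (i) and the identity $\theta u=\theta$ of (ii) together. By Lemma \ref{lem2}, $C^*(H)$ is a Banach $B(H)$-module, so multiplication on $B(H)=C^*(H)^*$ is separately weak$^*$-continuous. Fixing the factor $u$ and applying this to $\wsn u^n=\theta$ gives $u^{n+1}=u\cdot u^n\to u\theta$; comparing with $u^{n+1}\to\theta$ yields $u\theta=\theta$. Iterating, $\theta u^n=\theta$ for all $n$. Now fixing the factor $\theta$ we get $\theta u^n\to\theta^2$, while the left-hand side is the constant sequence $\theta$; hence $\theta^2=\theta$. Thus $\theta$ is an idempotent with $\theta u=\theta$, and by Lemma \ref{lem7} and Remark \ref{rem2} we may write $\theta=\1_{\dot F}$ for a clopen set $\dot F\in\sR_c(H)$.

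Next I would identify $\dot F$ with $\dot F_u^\circ$. Since $B(H)$ is an algebra of continuous functions under pointwise product, $\theta u=\theta$ says $u(\dot x)=1$ for every $\dot x\in\dot F$; thus $\dot F\subseteq\dot F_u$, and as $\dot F$ is open, $\dot F\subseteq\dot F_u^\circ$. For the reverse inclusion I would use that $\langle v,\lambda(f)\rangle=\int_H vf$ for $v\in B(H)$ and $f\in L^1(H)$, as recorded in the proof of Lemma \ref{lem2}. For $f\in L^1(H)$ with $\supp f\subseteq\dot F_u^\circ$ one has $u^nf=f$ because $u\equiv1$ there, so weak$^*$-convergence gives $\int_{\dot F}f=\langle\theta,\lambda(f)\rangle=\lim_n\langle u^n,\lambda(f)\rangle=\int_H f$. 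Hence $\int_{\dot F_u^\circ\setminus\dot F}f=0$ for all such $f$; taking $f=\1_K$ with $K\subseteq\dot F_u^\circ\setminus\dot F$ compact shows this open set is Haar-null, hence empty. Therefore $\dot F=\dot F_u^\circ$, which is (ii).

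Finally, (iii) follows in the same spirit. Since $\theta=\1_{\dot F_u^\circ}$ is an idempotent, Remark \ref{rem2} forces $\dot F_u^\circ$ to be clopen, in particular closed (so that $p^{-1}(\dot F_u^\circ)$ is closed in $G$). For the measure statement I would repeat the testing argument with functions supported in the closed set $\dot F_u$: for $f\in L^1(H)$ with $\supp f\subseteq\dot F_u$ one again has $u^nf=f$, whence $\int_{\dot F_u^\circ}f=\langle\theta,\lambda(f)\rangle=\int_{\dot F_u}f$, i.e. $\int_{\dot F_u\setminus\dot F_u^\circ}f=0$; running $f$ over indicators of compact subsets of $\dot F_u$ and using inner regularity gives $\lambda_H(\dot F_u\setminus\dot F_u^\circ)=0$. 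The step I expect to be most delicate is the reverse inclusion $\dot F_u^\circ\subseteq\dot F$: the weak$^*$ limit carries no pointwise information a priori---point evaluations need not be weak$^*$-continuous unless $H$ is discrete---so a naive pointwise argument is unavailable, and the crux is precisely the passage from the pointwise identity $u^n\equiv1$ on $\dot F_u^\circ$ to an integral identity for $\theta$, followed by the upgrade from Haar-null to empty via openness and full support of the Haar measure.
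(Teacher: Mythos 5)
Your proposal is correct and follows essentially the same route as the paper: the identities $u\theta=\theta$ and $\theta^2=\theta$ via weak$^*$-separate continuity of the product (Lemma \ref{lem2}), the identification $\theta=\1_{\dot F}$ for a clopen $\dot F$ via Lemma \ref{lem7} and Remark \ref{rem2}, and then the localisation $\dot F=\dot F_u^\circ$ together with $\lambda_H(\dot F_u\setminus\dot F_u^\circ)=0$ by testing against $L^1$-functions. The paper delegates this last part to \cite[Lemma 3.1]{kau13}; your write-up is a correct self-contained version of that argument.
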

	\begin{proof} 
		By Lemma \ref{lem2},  multiplication in $ B (H) $ is  separately weak$ ^* $  continuous and hence for each $ \varrho \in C^* (H) $, we have
		$$\langle u \theta ,\varrho \rangle 
		=\lim_n \langle u^{n+1}  ,\varrho \rangle = \langle  \theta , \varrho \rangle,$$
		i.e., $ u \theta=\theta.$ Further,
		\begin{align*}
			\langle \theta^2 , \varrho \rangle &=\lim_n \lim_m \langle u^n u^m , \varrho \rangle =\lim_n \lim_m \langle u^{n +m }, \varrho \rangle =  \langle \theta, \varrho \rangle,
		\end{align*}
		showing that $\theta$ is an idempotent.  By Lemma \ref{lem7} and Remark \ref{rem2}, there exists a clopen set $\dot{E}\in\sR(H)$ such that $\theta=\1_{\dot{E}}.$ The remaining assertions follows as in \cite[Lemma 3.1]{kau13}		
	\end{proof}
	Our next result gives conditions under which the weak* limit used in Lemma \ref{lem3} holds. As the proof of this follows exactly as in the \cite[Lemma 3.2]{kau13}, we omit the proof.
	\begin{lemma} \label{lem4}
		Let $u$ be a power bounded element of $ B  (H)  $ and suppose that $\lim_{n\rightarrow \infty}\ \Vert u ^{n+1}- u^n\Vert =0 $.  Then $\theta =\wsn\ u^n $
		exists.
	\end{lemma}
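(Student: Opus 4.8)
The plan is to run the standard dual-Banach-algebra argument, the whole point being that $B(H)=C^*(H)^*$ is a dual space on which, by Lemma \ref{lem2}, multiplication is separately weak$^*$-continuous. First I would exploit power boundedness: since $c:=\sup_n\Vert u^n\Vert<\infty$, the sequence $\{u^n\}$ lies in the ball $cB_{B(H)}$, which is weak$^*$-compact by Banach--Alaoglu. Hence $\{u^n\}$ has at least one weak$^*$-cluster point, and it suffices to prove that this cluster point is \emph{unique}: a net in a compact Hausdorff space with a unique cluster point converges to it, and since the weak$^*$-topology is Hausdorff, uniqueness of the cluster point immediately yields the existence of $\wsn u^n$.

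Next I would record the single property forced on every cluster point by the hypothesis $\Vert u^{n+1}-u^n\Vert\to 0$. Let $\theta$ be a weak$^*$-cluster point, say $\theta=\wsa u^{n_\alpha}$ along a subnet whose indices satisfy $n_\alpha\to\infty$ (possible precisely because $\theta$ is a cluster point of the full sequence). Multiplying by $u$, which is weak$^*$-continuous by Lemma \ref{lem2}, gives $u\theta=\wsa u^{n_\alpha+1}$; but $\Vert u^{n_\alpha+1}-u^{n_\alpha}\Vert\to 0$, so $u^{n_\alpha+1}-u^{n_\alpha}\to 0$ in norm, a fortiori weak$^*$, and therefore $u\theta=\theta$. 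By induction $u^m\theta=\theta$ for every $m\ge 1$.

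The crux is the uniqueness of the cluster point, and here the fixed-point relation $u^m\theta=\theta$ does all the work. Suppose $\theta_1=\wsa u^{p_\alpha}$ and $\theta_2=\wsb u^{q_\beta}$ are two cluster points. Using that $a\mapsto a\theta_2$ is weak$^*$-continuous and that $u^{p_\alpha}\theta_2=\theta_2$ for every $\alpha$, I obtain $\theta_1\theta_2=\wsa(u^{p_\alpha}\theta_2)=\theta_2$; symmetrically $\theta_2\theta_1=\theta_1$. Since $B(H)$ is commutative, $\theta_1=\theta_1\theta_2=\theta_2\theta_1=\theta_2$, which proves uniqueness and, combined with the first paragraph, the existence of $\wsn u^n$.

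The only genuine subtlety I anticipate is that $C^*(H)$ need not be separable, so the weak$^*$-topology on bounded subsets of $B(H)$ need not be metrizable; one therefore cannot pass to convergent subsequences and must argue throughout with subnets (taking care that the index $n_\alpha$ tends to infinity) and with the compactness characterization of convergence by unique cluster points. Everything else I need, namely separate weak$^*$-continuity of multiplication and weak$^*$-compactness of norm balls, is supplied by Lemma \ref{lem2} and Banach--Alaoglu, so no further machinery is required; note in particular that, unlike in Lemma \ref{lem3}, idempotency of $\theta$ is not needed for this existence statement.
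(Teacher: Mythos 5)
Your proof is correct and takes essentially the same route as the paper's: the paper defers to Kaniuth's Lemma 3.2, whose argument is exactly this one — weak$^*$-compactness of the bounded orbit $\{u^n\}$, the relation $u\theta=\theta$ for every weak$^*$-cluster point $\theta$ forced by $\Vert u^{n+1}-u^n\Vert\to 0$ together with the separate weak$^*$-continuity of multiplication from Lemma \ref{lem2}, and uniqueness of the cluster point via $\theta_1=\theta_2\theta_1=\theta_1\theta_2=\theta_2$. Your attention to the subnet/non-metrizability issue is appropriate and the argument has no gaps.
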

	Here is a characterization of the power boundedness of an element $u\in B(H)$ in terms of the set $\dot{F}_u.$ For the corresponding result on locally compact groups, see \cite[Theorem 3.3]{kau13}.
	\begin{theorem}\label{thm1}
		Let $H$ be an ultraspherical hypergroup associated to a locally compact group $G$ and let 
		$ u \in B (H) $ such that $ \underset{n\rightarrow \infty}{\lim}\ \Vert u ^{n+1}- u^n\Vert=0 $.  Then the following are equivalent.
			\item[(i)] $u$ is power bounded.
			\item[(ii)] The set $ \dot{F}_{u}^\circ $ is closed in $H$, belongs to the coset ring $ \sR(H) $ and satisfies $\wsn\ {\bf 1}_{H\setminus{\dot{F}_{u}^\circ}} u^n =0 $. 
	\end{theorem}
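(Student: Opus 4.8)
The plan is to prove the two implications separately, leaning heavily on the structural results already assembled for the sequence $\{u^n\}$. Throughout I assume the hypothesis $\lim_{n\to\infty}\|u^{n+1}-u^n\|=0$, which by Lemma \ref{lem4} guarantees that $\theta=\wsn\ u^n$ exists, and by Lemma \ref{lem3} that $\theta=\1_{\dot F_u^\circ}$, that $\dot F_u^\circ$ is closed and that $\lambda_H(\dot F_u\setminus \dot F_u^\circ)=0$. So in both directions the idempotent $\theta$ and its support $\dot F_u^\circ$ are available for free, and the real content is to connect power boundedness to the coset-ring membership and to the vanishing of the complementary weak$^*$ limit.

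For (i)$\Rightarrow$(ii), suppose $u$ is power bounded. First I would invoke Lemma \ref{thm3}: since $u$ is power bounded, $\dot F_u\in\sR_c(H)$, and then Corollary \ref{lem9}(i) gives that the interior $\dot F_u^\circ\in\sR_c(H)\subseteq\sR(H)$; together with Lemma \ref{lem3}(iii) this already yields the closedness and coset-ring assertions. The remaining point is the identity $\wsn\ \1_{H\setminus\dot F_u^\circ}\,u^n=0$. The idea here is to write $\1_{H\setminus\dot F_u^\circ}=\1_H-\1_{\dot F_u^\circ}=\1_H-\theta$ and compute the weak$^*$ limit of $(\1_H-\theta)u^n$ against an arbitrary $\varrho\in C^*(H)$. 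Using weak$^*$-separate continuity of multiplication (Lemma \ref{lem2}) and $\theta u=\theta$ from Lemma \ref{lem3}(ii), the product $(\1_H-\theta)u^n$ should converge weak$^*$ to $(\1_H-\theta)\theta=\theta-\theta^2=0$; the power boundedness ensures the relevant nets stay bounded so the limit can be evaluated termwise.

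For the converse (ii)$\Rightarrow$(i), I would argue that boundedness of $\{u^n\}$ follows once we control the pieces of $u^n$ on $\dot F_u^\circ$ and on its complement. On $\dot F_u^\circ$ the function $u$ equals $1$, so $\theta u^n=\theta$ for every $n$ and this piece is uniformly bounded by $\|\theta\|=\|\1_{\dot F_u^\circ}\|$, which is finite because $\dot F_u^\circ\in\sR(H)$ forces $\1_{\dot F_u^\circ}$ to be an idempotent in $B(H)$ by Lemma \ref{lem7}. The complementary piece is $(\1_H-\theta)u^n=\1_{H\setminus\dot F_u^\circ}u^n$, and the hypothesis says exactly that this tends weak$^*$ to $0$; combined with the decay $\|u^{n+1}-u^n\|\to0$ one expects the norms $\|\1_{H\setminus\dot F_u^\circ}u^n\|$ to be uniformly bounded, whence $\sup_n\|u^n\|\le\|\theta u^n\|+\|(\1_H-\theta)u^n\|<\infty$. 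The main obstacle is precisely this last step: a weak$^*$-null net need not be norm-bounded a priori, so I would need to extract norm boundedness either from a uniform boundedness/Banach--Steinhaus argument applied to the functionals $\langle\,\cdot\,,\varrho\rangle$, or by transferring to the group setting via $\tilde u\in B_\pi(G)$ and Remark \ref{rem1} and quoting the analogous group-level estimate from \cite[Theorem 3.3]{kau13}. I expect the cleanest route is the latter transference, reducing the hypergroup statement to its established group counterpart through the isometric identification $\|u\|_{B(H)}=\|\tilde u\|_{B(G)}$.
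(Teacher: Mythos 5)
Your outline matches what the paper intends --- the paper's own ``proof'' merely says that, given Lemmas \ref{lem2}, \ref{lem3} and \ref{lem4}, the argument of \cite[Theorem 3.3]{kau13} carries over verbatim --- and the decomposition $u^n=\theta u^n+(\1_H-\theta)u^n$ with $\theta=\1_{\dot F_u^\circ}$ is exactly the right mechanism in both directions. Two corrections, though. First, your opening claim that Lemma \ref{lem4} hands you $\theta=\wsn\ u^n$ ``in both directions for free'' is circular: Lemma \ref{lem4} assumes $u$ is power bounded, which is precisely the conclusion of (ii)$\Rightarrow$(i). Fortunately your actual argument for that direction does not use it --- you rebuild $\theta$ from the hypothesis that $\dot F_u^\circ$ is clopen and lies in $\sR(H)$ via Lemma \ref{lem7} --- but the framing should be fixed. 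Also, for (i)$\Rightarrow$(ii) you do not need Corollary \ref{lem9}, which carries an amenability hypothesis absent from the theorem: Lemma \ref{lem3} already identifies $\theta=\1_{\dot F_u^\circ}$ as an idempotent of $B(H)$, and Lemma \ref{lem7} then places $\dot F_u^\circ$ in $\sR_c(H)$ directly.

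Second, the step you flag as ``the main obstacle'' is not one, and the route you prefer is the shakier of your two options. The family $\{\1_{H\setminus\dot F_u^\circ}u^n\}_{n\in\mathbb N}$ is a \emph{sequence} in the dual space $B(H)=C^*(H)^*$ that converges weak$^*$; hence for every $\varrho\in C^*(H)$ the scalars $\langle\1_{H\setminus\dot F_u^\circ}u^n,\varrho\rangle$ form a bounded set, and the uniform boundedness principle (applicable because $C^*(H)$ is complete) gives $\sup_n\Vert\1_{H\setminus\dot F_u^\circ}u^n\Vert<\infty$ outright. Neither the decay $\Vert u^{n+1}-u^n\Vert\to0$ nor any further input is needed to finish (ii)$\Rightarrow$(i) from there. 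By contrast, the transference to $\tilde u\in B(G)$ that you declare ``cleanest'' is not: to invoke \cite[Theorem 3.3]{kau13} for $\tilde u$ you would need $\1_{G\setminus F_{\tilde u}^\circ}\tilde u^n\to0$ in $\sigma(B(G),C^*(G))$, and passing from convergence in $\sigma(B(H),C^*(H))$ to this requires identifying the two weak$^*$ topologies on $B_\pi(G)$, an argument you do not supply. Keep the Banach--Steinhaus version and drop the transference, and the proof is complete.
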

	\begin{proof}
            In view of Lemma \ref{lem2}, Lemma \ref{lem3} and Lemma \ref{lem4}, the proof of this follows exactly as in \cite[Theorem 3.3]{kau13}.
	\end{proof}
	The next lemma is a special case of  \cite[Corollary 1.3]{klu10} and hence we omit the proof. 
	
	\begin{lemma} \label{lem5}
		Let $H$ be an ultraspherical hypergroup. Then for any power bounded element $ u \in B(H) $, the element $ \frac{1+u}{2} $ is power bounded and
		$$\lim_{n\rightarrow \infty} \left\Vert \left(\frac{1+u}{2}\right)^{n+1}-\left(\frac{1+u}{2}\right)^{n}\right\Vert=0.$$
	\end{lemma}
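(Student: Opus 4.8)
The plan is to bypass the abstract citation and give the direct computation, exploiting that $B(H) = B_\pi(G)$ is a commutative unital Banach algebra (the constant function $1$ being $\pi$-radial and of norm one), so that $u$ commutes with $1$ and the binomial theorem applies termwise. Write $v := \frac{1+u}{2}$ and set $M := \sup_{k} \Vert u^k \Vert$, which is finite precisely because $u$ is power bounded, and note $M \ge \Vert u^0 \Vert = 1$. Expanding gives $v^n = 2^{-n}\sum_{k=0}^n \binom{n}{k} u^k$; since the weights $2^{-n}\binom{n}{k}$ are nonnegative and sum to $1$, the triangle inequality yields $\Vert v^n \Vert \le 2^{-n}\sum_{k=0}^n \binom{n}{k}\Vert u^k \Vert \le M$ for every $n$. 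This already settles the power boundedness of $v$.

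For the norm convergence I would start from $v^{n+1} - v^n = \tfrac12 v^n(u-1)$, substitute the binomial expansion of $v^n$, and reindex one of the two resulting sums. Collecting the coefficient of each power $u^j$ produces
$$v^{n+1} - v^n = \frac{1}{2^{n+1}}\left(-u^0 + \sum_{j=1}^{n}\Big(\binom{n}{j-1} - \binom{n}{j}\Big) u^j + u^{n+1}\right),$$
whence, using $\Vert u^j \Vert \le M$ for all $j$,
$$\Vert v^{n+1} - v^n \Vert \le \frac{M}{2^{n+1}}\Big(2 + \sum_{j=1}^n \big|\tbinom{n}{j-1}-\tbinom{n}{j}\big|\Big).$$
The remaining sum is the total variation of the $n$-th row of Pascal's triangle, and since that row is symmetric and unimodal with both end values equal to $1$, the total variation telescopes to $2\binom{n}{\lfloor n/2\rfloor} - 2$. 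Substituting gives the clean bound
$$\Big\Vert v^{n+1} - v^n \Big\Vert \le \frac{M\,\binom{n}{\lfloor n/2\rfloor}}{2^{n}}.$$

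To finish I would invoke the standard asymptotic for the central binomial coefficient, $\binom{n}{\lfloor n/2\rfloor} = O\!\big(2^n/\sqrt{n}\big)$ (e.g.\ via Stirling, $\binom{n}{\lfloor n/2\rfloor} \sim 2^n\sqrt{2/(\pi n)}$), so the right-hand side is $O(1/\sqrt n)$ and tends to $0$ as $n \to \infty$. The argument is elementary and self-contained, so there is no genuine obstacle; the only point demanding care is the bookkeeping in the reindexing that produces the consecutive differences $\binom{n}{j-1}-\binom{n}{j}$ together with the two boundary terms, and the observation that the sum of their absolute values is exactly the total variation of a unimodal sequence. This is precisely the statement of \cite[Corollary 1.3]{klu10} specialised to the commutative algebra $B(H)$, which is why the authors defer to it rather than reproduce the calculation.
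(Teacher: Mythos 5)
Your proposal is correct, and every step checks out: the binomial expansion of $v^n$ with the convex weights $2^{-n}\binom{n}{k}$ gives power boundedness immediately; the identity $v^{n+1}-v^n=\tfrac12 v^n(u-1)$ and the reindexing produce exactly the consecutive differences you write down; the total variation of the (symmetric, unimodal) $n$-th binomial row is indeed $2\binom{n}{\lfloor n/2\rfloor}-2$, yielding the bound $M\binom{n}{\lfloor n/2\rfloor}/2^n=O(n^{-1/2})$ by Stirling. The only difference from the paper is that the paper gives no argument at all --- it states that the lemma is a special case of \cite[Corollary 1.3]{klu10} and omits the proof --- whereas you have reconstructed the elementary computation that underlies that cited result (the classical ``$(1+u)/2$'' trick, which works in any Banach algebra once $u$ commutes with the identity, so commutativity of $B(H)$ is not even needed beyond that). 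Your version is self-contained and makes the quantitative rate $O(1/\sqrt{n})$ explicit, at the cost of the Pascal-row bookkeeping; the paper's version is shorter but opaque. One cosmetic remark: when you set $M=\sup_k\Vert u^k\Vert$ you should make explicit that the supremum starts at $k=0$ (so that the two boundary terms $\Vert u^0\Vert=\Vert 1\Vert_{B(H)}=1$ and $\Vert u^{n+1}\Vert$ are both covered by $M$), which is how you in fact use it.
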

	Combining Lemmas \ref{lem4} and  \ref{lem5} and the fact that $ F_u=F_{\frac{1+u}{2}} $ one obtains the following description of the power bounded elements of $ B(H) $.
	
	\begin{corollary} \label{cor1}
		Let $H$ be an ultraspherical hypergroup associated to a locally compact group $G$. Then,  for any power bounded $ u \in B(H) $, $ \theta= \wsn (\frac{1+u}{2}) ^n$ exists and  $  \theta = \1_{\dot{F}_{u}^\circ} $.
	\end{corollary}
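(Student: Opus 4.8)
The plan is to obtain $\theta$ by passing from $u$ to the averaged element $v=\frac{1+u}{2}$, for which the hypotheses of the existence lemma are automatically met, and then to identify the resulting idempotent via Lemma \ref{lem3}. First I would invoke Lemma \ref{lem5}: since $u$ is power bounded, the element $v=\frac{1+u}{2}$ is again power bounded and, crucially, satisfies $\lim_{n\to\infty}\|v^{n+1}-v^n\|=0$. This is exactly the property that $u$ itself may fail, and it is precisely the hypothesis needed for Lemma \ref{lem4}. Applying Lemma \ref{lem4} to $v$ then yields the existence of the weak$^*$-limit $\theta=\wsn v^n=\wsn\left(\frac{1+u}{2}\right)^n$, which settles the first assertion.

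Next I would identify $\theta$. Since the weak$^*$-limit $\theta=\wsn v^n$ exists, Lemma \ref{lem3}(ii) applies directly to $v$ and gives $\theta=\1_{\dot F_v^\circ}$, where $\dot F_v=\{\dot x\in H: v(\dot x)=1\}$. The remaining task, and the only genuine verification, is to show $\dot F_v=\dot F_u$, so that $\dot F_v^\circ=\dot F_u^\circ$ and hence $\theta=\1_{\dot F_u^\circ}$, as claimed. This is immediate from the pointwise identity: for $\dot x\in H$ one has $v(\dot x)=\frac{1+u(\dot x)}{2}=1$ if and only if $u(\dot x)=1$. Thus $F_v=F_u$, equivalently $\dot F_v=\dot F_u$, and passing to interiors finishes the argument.

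I do not expect any real obstacle here, as all the analytic content is already contained in Lemmas \ref{lem3}, \ref{lem4}, and \ref{lem5}, and the proof is a formal chaining of these together with the trivial set identity $\dot F_u=\dot F_{(1+u)/2}$. The one point demanding care is that Lemmas \ref{lem4} and \ref{lem3} must be applied to $v$ rather than to $u$ directly, since it is $v$, and not $u$, that is guaranteed to satisfy the norm-difference condition $\|v^{n+1}-v^n\|\to 0$ driving the existence of the weak$^*$-limit.
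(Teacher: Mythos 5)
Your argument is correct and is precisely the one the paper intends: the paper derives this corollary by combining Lemma \ref{lem5} (to get power boundedness of $\frac{1+u}{2}$ and the norm condition $\|v^{n+1}-v^n\|\to 0$), Lemma \ref{lem4} (for existence of the weak$^*$-limit), and the identity $\dot F_u=\dot F_{(1+u)/2}$, with Lemma \ref{lem3} identifying the limit as $\1_{\dot F_u^\circ}$. Your care in applying Lemmas \ref{lem4} and \ref{lem3} to $v$ rather than $u$ is exactly the right point to emphasize.
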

	The following is a simple observation.
	\begin{lemma}\label{PBmod}
		Let $H$ be an ultraspherical hypergroup associated a locally compact group $G.$ If $u\in B(H)$ is power bounded, then so is $|u|.$
	\end{lemma}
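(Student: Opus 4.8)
The plan is to reduce the statement to the known locally compact group situation via the identification $B(H)=B_\pi(G)$ together with the correspondence $u\leftrightarrow\tilde u$ recorded in Remark \ref{rem1}. Throughout I would fix $C=\sup_{n}\Vert u^n\Vert_{B(H)}<\infty$ and use that $\Vert u^n\Vert_{B(H)}=\Vert\tilde u^{\,n}\Vert_{B(G)}$ for every $n$, so that $u$ is power bounded in $B(H)$ exactly when $\tilde u$ is power bounded in $B(G)$, and similarly for $|u|$.

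First I would isolate the elementary part, namely that $|u|^2$ is power bounded. Complex conjugation is an isometric involution on $B(G)$ which preserves $\pi$-radiality: if $\tilde u$ is constant on each orbit $\mathcal O_x$, then so is $\overline{\tilde u}$. Hence $\bar u\in B(H)$ with $\Vert\bar u^{\,n}\Vert_{B(H)}=\Vert u^n\Vert_{B(H)}\le C$. Since $B(H)$ is a commutative Banach algebra whose multiplication is pointwise, $(u\bar u)^n=u^n\bar u^{\,n}$, whence $\Vert(u\bar u)^n\Vert_{B(H)}\le\Vert u^n\Vert_{B(H)}\Vert\bar u^{\,n}\Vert_{B(H)}\le C^2$. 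As $u\bar u=|u|^2$, the element $|u|^2$ is power bounded.

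The key reduction is then that power boundedness of $|u|$ follows once one knows the membership $|u|\in B(H)$. Indeed, granting $|u|\in B(H)$, the previous step gives $\Vert|u|^{2n}\Vert_{B(H)}=\Vert(|u|^2)^n\Vert_{B(H)}\le C^2$, while $\Vert|u|^{2n+1}\Vert_{B(H)}=\Vert|u|^{2n}\,|u|\Vert_{B(H)}\le C^2\Vert|u|\Vert_{B(H)}$, so that $\sup_n\Vert|u|^n\Vert_{B(H)}\le C^2\max(1,\Vert|u|\Vert_{B(H)})<\infty$. For the membership itself I would invoke the corresponding result for locally compact groups: since $\tilde u$ is a power bounded element of $B(G)$, the function $|\tilde u|$ again belongs to $B(G)$ (cf.\ \cite{klu10}). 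Because $\tilde u$ is $\pi$-radial, i.e.\ constant on each orbit $\mathcal O_x$, the function $|\tilde u|$ is constant on each orbit as well and is therefore $\pi$-radial; thus $|\tilde u|\in B_\pi(G)=B(H)$. As $|\tilde u|$ agrees pointwise with $|u|$ under $p$ (namely $|u|(\dot x)=|\tilde u(x)|$), it is precisely the element of $B_\pi(G)$ associated with $|u|$, and so $|u|\in B(H)$.

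I expect the genuine obstacle to be exactly this membership step. Taking absolute values is not a bounded operation on $B(G)$, and the square root does not operate on $B(G)$ in general, so one cannot obtain $|u|=\sqrt{|u|^2}$ from $|u|^2\in B(H)$ by any soft functional-calculus argument; the fact that $|u|\in B(H)$ really relies on the structure theory of power bounded elements of $B(G)$ (and, via Lemma \ref{thm3}, on $\dot E_u\in\sR_c(H)$). Everything else in the proof is the ``simple observation'' that $|u|^2=u\bar u$ is power bounded and that bounded even powers plus one extra factor of $|u|$ control the odd powers.
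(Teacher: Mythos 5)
Your argument is essentially the paper's own: the paper likewise bounds the even powers via $\Vert\,|u|^{2n}\Vert=\Vert u^{n}\bar u^{\,n}\Vert\le\Vert u^{n}\Vert\,\Vert\bar u^{\,n}\Vert=\Vert u^{n}\Vert^{2}\le\gamma^{2}$ (using $\Vert v\Vert=\Vert\bar v\Vert$) and then controls the odd powers by one extra factor of $\Vert\,|u|\,\Vert$. The only difference is that you explicitly justify why $|u|$ belongs to $B(H)$ at all, a point the paper's proof uses silently the moment it writes $\Vert\,|u|\,\Vert$; you are right that this membership is the real content here, and your reduction to the group case together with the $\pi$-radiality of $|\tilde u|$ is the natural way to supply it.
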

	\begin{proof}
		Let us first note that $ \Vert u \Vert = \Vert \bar u  \Vert $ for all $ u \in B(H),$ by \cite[Remark 2.9]{mur07}. Let $u\in B(H)$ be power bounded with $\gamma=\sup_n \Vert u^n \Vert.$ Then for each $ n \in \mathbb{N} $, we have $$\Vert \vert u\vert^{2n} \Vert= \Vert  u ^{ n} \bar u ^{ n} \Vert\leq \Vert  u ^{ n}   \Vert \cdot \Vert    \bar u ^{ n} \Vert=\Vert  u ^{ n}   \Vert^2\leq \gamma^2$$ and $$\Vert \vert u \vert^{2n+1} \Vert
		\leq \Vert \vert  u \vert  \Vert \cdot \Vert \vert u\vert^{2n} \Vert \leq  \gamma^2  	\Vert \vert  u \vert \Vert. $$
		Thus, $ \vert u\vert $ is a power bounded element of $ B(H) $.
	\end{proof}

	The next result  is an analogue of \cite[Proposition 3.5]{kau13}.
	\begin{corollary} \label{cor2}
		Let $H$ be an ultraspherical hypergroup associated to a locally compact group $G$. Then, the set $\dot{E}_{u}^\circ$  is closed in $H$ and $\lambda(\dot{E}_u \setminus \dot{E}_{u}^\circ)=0$. 
	\end{corollary}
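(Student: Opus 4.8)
The plan is to reduce the assertion about $\dot E_u$ to the already-established assertion about $\dot F_v$ for a suitably chosen power bounded element $v$, exploiting the fact that $\dot E_u$ is precisely the level set where $|u|=1$. The crucial bridge is Lemma \ref{PBmod}: since $u$ is power bounded, so is $|u|$, and in particular $|u|\in B(H)$. First I would record that $|u|$ is real-valued with $0\le |u|\le 1$, the upper bound coming from power boundedness (if $\gamma=\sup_n\Vert u^n\Vert$, then $\Vert u\Vert_\infty^n=\Vert u^n\Vert_\infty\le\Vert u^n\Vert_{B(H)}\le\gamma$, so $\Vert u\Vert_\infty\le 1$). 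It then follows directly from the definitions that
$$\dot F_{|u|}=\{\dot x\in H:|u|(\dot x)=1\}=\{\dot x\in H:|u(\dot x)|=1\}=\dot E_u,$$
and consequently that $\dot F_{|u|}^\circ=\dot E_u^\circ$.

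With this identification in hand, the second step is to run the machinery of Corollary \ref{cor1} and Lemma \ref{lem3} with $|u|$ in place of $u$. Applying Corollary \ref{cor1} to the power bounded element $|u|$ shows that $\theta=\wsn\bigl(\tfrac{1+|u|}{2}\bigr)^n$ exists and equals $\1_{\dot F_{|u|}^\circ}$. Then Lemma \ref{lem3}(iii), invoked for $v=\tfrac{1+|u|}{2}$ and using that $\dot F_v=\dot F_{|u|}$ (because $\tfrac{1+|u|}{2}=1$ exactly where $|u|=1$), yields that $\dot F_{|u|}^\circ$ is closed in $H$ and that $\lambda(\dot F_{|u|}\setminus\dot F_{|u|}^\circ)=0$.

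Finally I would substitute the identifications $\dot F_{|u|}=\dot E_u$ and $\dot F_{|u|}^\circ=\dot E_u^\circ$ from the first step into the conclusions of the second, obtaining at once that $\dot E_u^\circ$ is closed in $H$ and that $\lambda(\dot E_u\setminus\dot E_u^\circ)=0$, as required. The only genuine content here is the passage from $u$ to $|u|$, supplied by Lemma \ref{PBmod}; I expect the main (and it is a minor) obstacle to be the careful bookkeeping that $|u|$ really lies in $B(H)$ and that its set $\dot F_{|u|}$ coincides with $\dot E_u$, since once these are in place everything else is a verbatim transcription of the arguments already carried out for $\dot F_u$ in Corollary \ref{cor1} and Lemma \ref{lem3}.
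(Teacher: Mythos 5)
Your proposal is correct and follows essentially the same route as the paper, which likewise cites Lemma \ref{PBmod}, the identity $\dot E_u=\dot F_{|u|}$, and Lemma \ref{lem3} (with the weak$^*$ limit supplied via Corollary \ref{cor1}). The extra bookkeeping you supply --- that $\Vert u\Vert_\infty\le 1$ forces $\dot F_{|u|}=\dot E_u$ and that $\dot F_{(1+|u|)/2}=\dot F_{|u|}$ --- is exactly what the paper leaves implicit.
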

	\begin{proof}
		This follows from Lemma \ref{lem3}, Lemma \ref{PBmod} and the fact that $ \dot{E}_u = \dot{F}_{\vert u \vert}.$
	\end{proof}
	
	As an immediate consequence of Corollary \ref{cor1} we obtain the following
	lemma.
	
	\begin{lemma} \label{PropTheta}
		Let $ u \in B(H) $ be power bounded and let $ \theta = \wsn_{\rightarrow\infty}(\frac{1+u}{2}) ^n  $. Then we have
			\item[(i)] $ \langle \theta , f \rangle = \int_{\dot{F}_u}f(\dot x) d \dot x $ for all $ f \in L^1(H) $;
			\item[(ii)] $ \theta =0 $ if and only if $\lambda_H (\dot{F_u})=0 $.
	\end{lemma}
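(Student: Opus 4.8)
The plan is to exploit the explicit description of the weak$^*$-limit $\theta$ furnished by Corollary \ref{cor1}, namely $\theta = \1_{\dot{F}_u^\circ}$, together with the integration formula for the Haar measure. For part (i), I would evaluate $\langle \theta, f\rangle$ for $f \in L^1(H)$ by viewing $\theta = \1_{\dot{F}_u^\circ}$ as an element of $VN(H) = A(H)^*$ acting through the canonical pairing. The key identity to invoke is that for an idempotent of the form $\1_{\dot{E}}$ with $\dot{E}$ clopen, the pairing against $\lambda(f)$ should reduce to integration over $\dot{E}$ against Haar measure; concretely I expect
\begin{equation*}
\langle \theta, \lambda(f)\rangle = \int_{\dot{F}_u^\circ} f(\dot x)\, d\dot x.
\end{equation*}
The remaining point is then to replace the domain $\dot{F}_u^\circ$ by $\dot{F}_u$ in the integral, which is exactly where I would appeal to Lemma \ref{lem3}(iii): since $u$ is power bounded, $\tfrac{1+u}{2}$ is power bounded by Lemma \ref{lem5}, the associated weak$^*$-limit exists by Lemma \ref{lem4}, and Lemma \ref{lem3}(iii) gives $\lambda_H(\dot{F}_u \setminus \dot{F}_u^\circ) = 0$. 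Hence integration over $\dot{F}_u^\circ$ and over $\dot{F}_u$ agree for every $f \in L^1(H)$, yielding (i).

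For part (ii), the equivalence $\theta = 0 \iff \lambda_H(\dot{F}_u) = 0$ follows readily from (i). In the forward direction, if $\theta = 0$ then $\int_{\dot{F}_u} f(\dot x)\, d\dot x = 0$ for all $f \in L^1(H)$; testing against a suitable positive $f$ (for instance an approximate identity, or using that $L^1(H)$ separates measures) forces $\lambda_H(\dot{F}_u) = 0$. Conversely, if $\lambda_H(\dot{F}_u) = 0$, then the formula in (i) shows $\langle \theta, f\rangle = 0$ for all $f \in L^1(H)$, and since $\theta \in VN(H) = A(H)^*$ and $L^1(H)$ (equivalently $\lambda(L^1(H))$) is weak$^*$-dense in $C^*(H)$ — or using that $A(H)$ is Tauberian so that $\theta$ is determined by its values on $L^1(H)$ — we conclude $\theta = 0$.

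The main obstacle I anticipate is justifying the integration formula $\langle \theta, \lambda(f)\rangle = \int_{\dot{F}_u^\circ} f\, d\dot x$ rigorously, since $\theta$ lives in the enveloping von Neumann algebra $VN(H)$ and its action on $\lambda(L^1(H))$ must be computed carefully. The cleanest route is to observe that $\theta = \1_{\dot{F}_u^\circ}$ is itself an element of $B(H)$ (being an idempotent with clopen support, by Lemma \ref{lem7} and Remark \ref{rem2}), so the pairing $\langle \theta, \lambda(f)\rangle$ can be computed exactly as in Lemma \ref{lem2}, where the $B(H)$-module action on $C^*(H)$ is shown to satisfy $\langle \varphi \cdot \lambda(f), \psi\rangle = \int_H f(\dot x)\varphi(\dot x)\psi(\dot x)\, d\dot x$. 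Taking $\psi$ to approximate the identity (or directly using the density argument from the proof of Lemma \ref{lem2}) collapses this to $\int_H f(\dot x)\,\1_{\dot{F}_u^\circ}(\dot x)\, d\dot x = \int_{\dot{F}_u^\circ} f(\dot x)\, d\dot x$, which is the desired formula. Once this identity is in hand, parts (i) and (ii) are immediate from the null-set statement of Lemma \ref{lem3}(iii) and a routine separation argument.
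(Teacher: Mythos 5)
Your proposal is correct and follows essentially the same route as the paper, which presents this lemma with no written proof as an ``immediate consequence of Corollary \ref{cor1}'': identify $\theta=\1_{\dot F_u^\circ}$, compute $\langle\theta,\lambda(f)\rangle=\int_{\dot F_u^\circ}f\,d\dot x$ via the pairing of $B(H)=C^*(H)^*$ with $\lambda(L^1(H))$ as in Lemma \ref{lem2}, and replace $\dot F_u^\circ$ by $\dot F_u$ using $\lambda_H(\dot F_u\setminus\dot F_u^\circ)=0$ from Lemma \ref{lem3}(iii). The only blemish is your initial framing of $\theta$ as an element of $VN(H)=A(H)^*$ rather than of $B(H)=C^*(H)^*$, but your final paragraph lands on the correct pairing, so the argument stands.
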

	\begin{lemma} \label{lem14}
		Let $ u \in B(H) $ be power bounded. Then the following assertions hold:
			\item[(i)] If  $ \dot E_u = \dot F_u $, then $\theta=\wsn_{\rightarrow\infty}  u  ^n  $ exists and $ \theta =\1_{\dot F_u^\circ} $.
			\item[(ii)] If $ H $ is discrete, then $\theta=\wsn_{\rightarrow\infty}  u  ^n  $ exists if and only if $ \dot E_u = \dot F_u $.
			\item[(iii)] $\wsn_{\rightarrow\infty} \vert u \vert ^n=0$ if and only if $ \dot E_u^\circ=\emptyset. $
			\item[(iv)] If $\wsn_{\rightarrow\infty} \vert u \vert ^n=0$, then $\wsn_{\rightarrow\infty}  u  ^n=0$.
	\end{lemma}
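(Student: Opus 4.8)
The plan is to establish (i) directly by a dominated-convergence argument and then obtain (ii), (iii) and (iv) as consequences of (i) combined with the null-set statements already available in Corollary \ref{cor2} and the pointwise structure of $u$. A preliminary observation I would record first is that power boundedness forces $\Vert u \Vert_\infty \le 1$: if $\gamma = \sup_n \Vert u^n \Vert_{B(H)}$, then $|u(\dot x)|^n = |u^n(\dot x)| \le \Vert u^n\Vert_{B(H)} \le \gamma$ for all $n$ and all $\dot x \in H$, so $|u(\dot x)| \le 1$; consequently $|u^n| \le 1$ pointwise for every $n$, which is exactly the uniform domination needed below.

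For (i), assume $\dot E_u = \dot F_u$. Then $u^n \to \1_{\dot F_u}$ pointwise on all of $H$: on $\dot F_u$ we have $u \equiv 1$, while off $\dot F_u = \dot E_u$ we have $|u(\dot x)| < 1$ and hence $u^n(\dot x) \to 0$. Since $|u^n| \le 1$, dominated convergence gives $\langle u^n, f \rangle = \int_H u^n f \, d\dot x \to \int_{\dot F_u} f \, d\dot x$ for every $f \in L^1(H)$; in particular $\langle u^n, f\rangle$ converges for each such $f$. Because $\{u^n\}$ is bounded and $L^1(H)$ is dense in $C^*(H)$ (Lemma \ref{lem2}), a standard approximation argument upgrades this to convergence of $\langle u^n, \varrho\rangle$ for every $\varrho \in C^*(H)$, so $\theta = \wsn u^n$ exists; then Lemma \ref{lem3}(ii) identifies it as $\theta = \1_{\dot F_u^\circ}$. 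I expect this to be the main obstacle, since the limit must be built by hand rather than quoted from Lemma \ref{lem4} (whose hypothesis $\Vert u^{n+1}-u^n\Vert \to 0$ is not assumed here); the delicate points are the passage from $L^1(H)$ to all of $C^*(H)$ and the use of the domination $|u^n|\le 1$.

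For (ii), the direction ``$\dot E_u = \dot F_u \Rightarrow$ the limit exists'' is precisely (i). For the converse, suppose $\theta = \wsn u^n$ exists and take $\dot x \in \dot E_u$. Since $H$ is discrete, the (normalized) point mass at $\dot x$ lies in $\ell^1(H) = L^1(H) \subseteq C^*(H)$, so testing $u^n$ against it shows that $u(\dot x)^n$ converges as $n \to \infty$. As $|u(\dot x)| = 1$, writing $u(\dot x) = e^{i\phi}$ forces $\phi = 0$, i.e. $u(\dot x) = 1$, so $\dot x \in \dot F_u$. Together with the trivial inclusion $\dot F_u \subseteq \dot E_u$ this gives $\dot E_u = \dot F_u$.

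Finally, for (iii) and (iv) I would pass to $v = |u|$, which is power bounded by Lemma \ref{PBmod} and satisfies $\dot F_v = \{\,\dot x : |u(\dot x)| = 1\,\} = \dot E_u = \dot E_v$. Applying (i) to $v$ yields $\wsn |u|^n = \1_{\dot E_u^\circ}$; since elements of $B(H)$ are genuine continuous functions, this vanishes exactly when $\dot E_u^\circ = \emptyset$, which is (iii). For (iv), the hypothesis together with (iii) gives $\dot E_u^\circ = \emptyset$, whence Corollary \ref{cor2} yields $\lambda_H(\dot E_u) = \lambda_H(\dot E_u \setminus \dot E_u^\circ) = 0$. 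Thus $u^n \to 0$ $\lambda_H$-almost everywhere (it already does so off $\dot E_u$), and the dominated-convergence-plus-density argument of (i) applies verbatim to give $\wsn u^n = 0$.
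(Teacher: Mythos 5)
Your proposal is correct and follows essentially the same route as the paper: pointwise convergence plus dominated convergence and a density/boundedness argument for (i), the pointwise argument at points of $\dot E_u\setminus\dot F_u$ for (ii) in the discrete case, reduction to $|u|$ via Lemma \ref{PBmod} for (iii), and almost-everywhere convergence to $0$ for (iv). The only (harmless) variations are that you identify the limit in (i) by citing Lemma \ref{lem3} rather than by invoking $\lambda(\dot F_u\setminus\dot F_u^\circ)=0$ directly, and you obtain (iii) by applying (i) to $|u|$ instead of repeating the dominated-convergence argument.
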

	\begin{proof}
		(i). Assume that $ \dot E_u = \dot F_u $. Then, we can show that the sequence  $ \{u^n \}_{n\in \mathbb{N}}$ converges pointwise to $ \1_{\dot F_u} $ and also by  Corollary \ref{cor2}, $ \lambda (\dot F_u \setminus \dot F_u^\circ)=0 $. Using these and the fact that $ \Vert u \Vert_\infty \leq 1 $, we have
		$$\lim_n u^n(\dot x)f(\dot x) =\1_{\dot F_u^\circ}(\dot x) f(\dot x) \qquad \text{and} \qquad \vert u^n f \vert \leq\vert f\vert,$$
		almost everywhere on $ H $ and for all $ f \in L^1(H) $.  
		Now, the Lebesgue’s dominated convergence theorem yields
		that
		\begin{align*}
			\langle u^n , \lambda(f) \rangle
			&= \int_H u^n f d\lambda \rightarrow \int_H   \1_{\dot F_u^\circ} f d\lambda= \langle \1_{\dot F_u^\circ}   , \lambda(f) \rangle
		\end{align*}
		as $ n\rightarrow \infty $. Here, the last  equality is a consequence of the fact that if $ u \in B(H) $ is power bounded, then $ \1_{\dot F_u^\circ} \in B(H) $. Now, the equality $ \theta =\1_{\dot F_u^\circ} $ follows from a simple approximation argument.
		
		(ii). If $\theta=\wsn_{\rightarrow\infty}  u  ^n  $ exists, then by Lemma \ref{lem3}, $ \theta = \1_{\dot F_u} $. Since $ H $  is discrete,  weak$^*$-topology on $ B(H) $ is stronger than topology of pointwise convergence and thus  we have
		$$\lim _{n\rightarrow\infty}  u  ^n(\dot x) = \1_{\dot F_u} (\dot x),\  \dot x \in H.$$
		Since $\dot F_u \subseteq \dot E_u,$ we only need to show the reversed inclusion. Towards a contradiction,  suppose that $ \dot x \in \dot E_u \setminus \dot F_u $. Then $ \vert u^n(\dot x) \vert=1 $ for all $ n \in \mathbb{N}$ and hence
		$$  \vert u  ^n(\dot x)-\1_{\dot F_u} (\dot x)  \vert=\vert u  ^n(\dot x)  \vert\rightarrow1\neq 0,$$
		as $ n \rightarrow \infty $, a contradiction. The converse follows from (i).
		
		(iii).
		If $ \dot E_u^\circ=\emptyset $, then Corollary \ref{cor2} implies that $ \lambda_H(\dot E) =0 $. Since $ u \in B(H) $ is power bounded, we have $ \vert u(\dot x) \vert_\infty < 1 $ for all $ \dot x \in H \setminus \dot E $. Thus 
		$$\lim_{n\rightarrow \infty} \vert u^n(\dot x)\vert =0, \qquad (\text {a.e. on}~   H).$$
		The rest of the proof runs as in (i).
		
		Assume that $\wsn_{\rightarrow\infty} \vert u \vert ^n=0$. Then we have
		$$\lambda_H(\dot E_u^\circ) \leq \lambda_H(\dot E_u) = \lambda_H(\dot F_{\vert u\vert}) =0,$$ 
		the last equality being a consequence of Lemma \ref{PropTheta}. Hence (iii) holds.
		
		(iv). The same reasoning as in the first part of the proof of  (iii) applies to here.
	\end{proof} 
	
	The assertion of the following corollary is an immediate consequence of Lemma \ref{lem14} and the fact $ \dot E_{uv} = \dot E_{u}\cap \dot E_{v}$ for all power bounded elements $ u,v \in B(H) $.
	\begin{corollary} 
		Let $ u, v $ be elements of $ B(H) $ such that $\wsn_{\rightarrow\infty} \vert u \vert ^n=0$ and $ v $ is power bounded. Then $\wsn_{\rightarrow\infty}  (uv ) ^n=0$.
	\end{corollary}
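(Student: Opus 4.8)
The plan is to follow the route indicated right after the statement: transport the hypothesis $\wsn |u|^n = 0$ through the level set $\dot E_u$, across the identity $\dot E_{uv} = \dot E_u \cap \dot E_v$ to the set $\dot E_{uv}$, and then back up to the conclusion, using nothing beyond Lemma \ref{lem14}(iii), Lemma \ref{lem14}(iv), and that identity. Here $u$ is taken to be power bounded, which is the standing convention in force whenever the symbol $\wsn |u|^n$ is used; since $v$ is power bounded as well, the estimate $\Vert (uv)^n \Vert = \Vert u^n v^n \Vert \le \Vert u^n \Vert \, \Vert v^n \Vert$ shows that $uv$ is power bounded. This last observation is what licenses the application of Lemma \ref{lem14} to $uv$.

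First I would apply Lemma \ref{lem14}(iii) to $u$: the hypothesis $\wsn |u|^n = 0$ gives $\dot E_u^\circ = \emptyset$. Next, the identity $\dot E_{uv} = \dot E_u \cap \dot E_v$ (valid because $u$ and $v$ are power bounded) yields the inclusion $\dot E_{uv} \subseteq \dot E_u$, and hence, by monotonicity of the interior operation, $\dot E_{uv}^\circ \subseteq \dot E_u^\circ = \emptyset$, so that $\dot E_{uv}^\circ = \emptyset$. I would then run Lemma \ref{lem14}(iii) in the reverse direction, now for the power bounded element $uv$, to obtain $\wsn |uv|^n = 0$. Finally, Lemma \ref{lem14}(iv) applied to $uv$ converts this vanishing of the modulus powers into the desired statement $\wsn (uv)^n = 0$.

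Every step is a direct appeal to a result already in hand, so I do not expect a genuine analytic obstacle; the only point needing care is the bookkeeping on power boundedness, namely checking that $uv$ is power bounded before Lemma \ref{lem14} is quoted for it and that the set identity is invoked only for power bounded arguments. I would also note in passing that the set-theoretic detour can be avoided altogether: since $\Vert v \Vert_\infty \le 1$, one has the pointwise bound $\vert (uv)^n f \vert \le \vert u \vert^n \vert f \vert$ for $f \in L^1(H)$, whence $\vert \langle (uv)^n , \lambda(f) \rangle \vert \le \langle \vert u \vert^n , \lambda(\vert f \vert) \rangle$, and the right-hand side tends to $0$ by hypothesis; together with boundedness of $\{(uv)^n\}$ this already gives $\wsn (uv)^n = 0$. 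The path through Lemma \ref{lem14} is shorter, however, and is the one the preceding remark points to.
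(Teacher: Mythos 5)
Your argument is correct and follows exactly the route the paper intends: the paper gives no written proof beyond declaring the corollary an immediate consequence of Lemma \ref{lem14} and the identity $\dot E_{uv}=\dot E_u\cap \dot E_v$, and your chain $\wsn |u|^n=0 \Rightarrow \dot E_u^\circ=\emptyset \Rightarrow \dot E_{uv}^\circ=\emptyset \Rightarrow \wsn |uv|^n=0 \Rightarrow \wsn (uv)^n=0$ is precisely the intended filling-in, with the needed check that $uv$ is power bounded supplied. Your parenthetical direct estimate $|\langle (uv)^n,\lambda(f)\rangle|\le \langle |u|^n,\lambda(|f|)\rangle$ is a valid shortcut but not the paper's route.
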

	Here is the final result of this section. This is about powers of power bounded elements and an element from the ideal $J_{A(H)}(C),$ where $C$ is a closed subset of $H.$ As the proof of this follows exactly as in \cite[Proposition 4.1]{mus19}, we shall omit the proof.
	\begin{theorem} \label{thm4}
		Let $u   $ be a power bounded element of $ B(H) $ and let $ C $ be a closed subset of $ H $. Then the following are equivalent:
			\item[(i)] $ \lim_{n\rightarrow \infty}\left\Vert \frac{1}{n}\sum_{k=0} ^{n-1}\vert u\vert^{2k} v\right\Vert_{A(H)}=0$ , for all $ v \in J_{A(H)}(C) $.
			\item[(ii)] $ \lim_{n\rightarrow \infty}\Vert u^n v\Vert_{A(H)}=0$, for all $ v \in J_{A(H)}(C)$.
	\end{theorem}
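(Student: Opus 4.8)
The plan is to read the statement as a mean-ergodic and stability assertion for the multiplication operators induced by $u$, $\bar u$ and $|u|^2$ on the closed ideal $X:=J_{A(H)}(C)$, and to transcribe \cite[Proposition 4.1]{mus19} into the hypergroup setting. First I would record the structural facts that put the operator picture in place. Since $u$ is power bounded with $\gamma:=\sup_n\|u^n\|_{B(H)}<\infty$, we have $\|u\|_\infty\le1$, because $\|u\|_\infty^n=\|u^n\|_\infty\le\|u^n\|_{B(H)}\le\gamma$ for every $n$; hence $|u|^2=u\bar u$ is a real element of $B(H)$ with $0\le|u|^2\le1$. By \cite[Remark 2.9]{mur07} conjugation is isometric on $B(H)$, so $\bar u$ and $|u|^2$ are again power bounded (as in Lemma \ref{PBmod}). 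Using the contractive module action $\|wv\|_{A(H)}\le\|w\|_{B(H)}\|v\|_{A(H)}$ of $B(H)$ on $A(H)$, the operators $M_u$, $M_{\bar u}$, $M_{|u|^2}$ are power bounded on $A(H)$; and since multiplication by a $B(H)$-function preserves compact supports, each of them leaves $X$ invariant. Throughout I would use the identity $|u|^{2n}=u^n\,\overline{u^n}$.

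The implication (ii)$\Rightarrow$(i) is the routine one. For $v\in X$ write $|u|^{2n}v=\overline{u^n}\,(u^n v)$; then
\begin{equation*}
\|\,|u|^{2n}v\,\|_{A(H)}=\|\overline{u^n}\,(u^n v)\|_{A(H)}\le\|\overline{u^n}\|_{B(H)}\,\|u^n v\|_{A(H)}=\|u^n\|_{B(H)}\,\|u^n v\|_{A(H)}\le\gamma\,\|u^n v\|_{A(H)}.
\end{equation*}
Thus (ii) forces $\|\,|u|^{2k}v\,\|_{A(H)}\to0$, and since the Ces\`aro means of a norm-null sequence are again norm-null, (i) follows.

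For (i)$\Rightarrow$(ii) I would first extract a pointwise consequence. As $A(H)\subseteq C_0(H)$ with $\|\cdot\|_\infty\le\|\cdot\|_{A(H)}$, the convergence in (i) is in particular uniform, hence pointwise; for fixed $\dot x$ the scalar Ces\`aro means $\tfrac1n\sum_{k=0}^{n-1}|u(\dot x)|^{2k}v(\dot x)$ tend to $\1_{\dot E_u}(\dot x)\,v(\dot x)$ (the summand equals $v(\dot x)$ when $|u(\dot x)|=1$ and tends to $0$ otherwise). Comparison with (i) gives $\1_{\dot E_u}v=0$, i.e.\ every $v\in X$ vanishes on the peripheral set $\dot E_u$. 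In operator terms, (i) also says that the Ces\`aro means of the power-bounded operator $M_{|u|^2}|_X$ vanish identically on $X$, so that $X=\overline{(I-M_{|u|^2})X}$; together with $v|_{\dot E_u}=0$ this strips off the unitary part of $v$ relative to $M_u$.

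It remains to upgrade the pointwise decay $u^n v\to0$ (immediate from $v|_{\dot E_u}=0$ and $|u|<1$ off $\dot E_u$) to the norm statement $\|u^n v\|_{A(H)}\to0$, and \emph{this passage from pointwise to $A(H)$-norm convergence is the main obstacle}, since it cannot be done by elementary estimates. Here I would invoke the stability machinery of \cite[Proposition 4.1]{mus19}: once the component of $v$ carried by the peripheral set has been removed, the restriction of $M_u$ to the closed $M_u$-invariant subspace generated by $v$ has suitably thin peripheral spectrum, and a Katznelson--Tzafriri type theorem yields $\|u^n v\|_{A(H)}\to0$. The point is that every ingredient of that argument is now available over $B(H)$---power boundedness of $u,\bar u,|u|^2$, the module action of $B(H)$ on $A(H)$, the invariance of $X=J_{A(H)}(C)$, and the fact that $\dot E_u$ lies in the closed coset ring $\sR_c(H)$ (Lemma \ref{thm3}) with the attendant synthesis properties---so the proof reduces to a line-by-line transcription of \cite[Proposition 4.1]{mus19}.
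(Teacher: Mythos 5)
Your overall strategy coincides with the paper's: the paper proves this theorem by a single sentence deferring to \cite[Proposition 4.1]{mus19}, and the genuinely useful content of your write-up is precisely what such a deferral requires, namely the verification that the ingredients transfer to the hypergroup setting ($\|u\|_\infty\le 1$, power boundedness of $\bar u$ and $|u|^2$ via \cite[Remark 2.9]{mur07} as in Lemma \ref{PBmod}, the contractive $B(H)$-module action on $A(H)$, and the invariance of $J_{A(H)}(C)$ under multiplication by $B(H)$). Your proof of (ii)$\Rightarrow$(i) is complete and correct, and the first two reductions you make in (i)$\Rightarrow$(ii) are also sound: the pointwise limit of the Ces\`aro means forces $v|_{\dot E_u}=0$ for every $v\in J_{A(H)}(C)$ (hence $\dot E_u\subseteq C$), and the mean ergodic theorem for the power bounded operator $M_{|u|^2}$ gives $J_{A(H)}(C)=\overline{(1-|u|^2)J_{A(H)}(C)}$.

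The one genuine soft spot is your gloss on how the hard step is finished. The assertion that ``the restriction of $M_u$ to the closed $M_u$-invariant subspace generated by $v$ has suitably thin peripheral spectrum'' is unjustified and false in general: for any $\mu=u(\dot x_0)$ with $\dot x_0\notin C$, the operator $M_u-\mu$ fails to be surjective on $J_{A(H)}(C)$ (every element of its range vanishes at $\dot x_0$, while some $v$ in the ideal does not), so $\sigma\bigl(M_u|_{J_{A(H)}(C)}\bigr)\supseteq\overline{u(H\setminus C)}$, and this closure can meet the unit circle in a large set even though $|u|<1$ pointwise off $\dot E_u\subseteq C$; Wiener--Pitt-type phenomena can enlarge the spectrum further. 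Consequently a direct appeal to Katznelson--Tzafriri or to an ABLV-type stability theorem does not close the argument. The step that actually has to be imported from \cite{mus19} is of a different nature: after the reduction $\|u^{n+m}v\|\le\gamma\,\|u^nv\|$ (which shows that $\lim_n\|u^nv\|=0$ is equivalent to $\inf_n\|u^nv\|=0$, and likewise for $|u|^{2n}v$), one needs the specific inequalities of \cite{klu10,mus19} tying $\|u^nv\|_{A}$ to $\||u|^{2n}v\|_{A}$ through the coefficient-function structure of the Fourier and Fourier--Stieltjes algebras. Since you explicitly defer to \cite[Proposition 4.1]{mus19} for this --- exactly as the paper does --- your proposal is acceptable as a pointer, but the mechanism you sketch is not the one that works, and anyone expanding your sketch literally would stall at this point.
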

	
	\subsection{Ces\`aro boundedness}
	Here we deal with Ces\`aro boundedness. This concept was introduced by Mustafayev \cite{mus19} in order to study certain non-abelian analogues of the ergodic theorem. We shall derive relations to Ces\`aro bounded elements with the sets $E_u$ and $F_u$ as done in the previous sections.
	
	We now begin with the definition of Ces\`aro boundedness.
	\begin{definition}
		Let $\mathcal A$ be a complex Banach algebra  with the unit element $e$. An element  $a \in \mathcal A  $ is said to be Ces\`aro bounded if $$\sup_{n\in \mathbb{N}} \left\Vert \frac{1}{n}\sum_{k=0}^{n-1}a^k\right\Vert <\infty.$$
	\end{definition}
	The following example shows that Ces\`aro boundedness does not imply power boundedness.
	\begin{rem}
		The Assani matrix 
		$T=\begin{pmatrix}
			-1 & 2 \\
			0 & -1 
		\end{pmatrix}$
		is  Ces\`aro bounded, but not power bounded.
	\end{rem}
	
	The following is an analogue of Lemma \ref{thm3}
	\begin{proposition} \label{pro3}
		Let $H$ be an ultraspherical hypergroup associated to a locally compact group $G$.
		If $ u \in B(H) $ is Ces\`aro bounded, then $ \dot F_u \in \sR_c(H) $.
	\end{proposition}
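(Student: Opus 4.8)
The plan is to reduce the assertion to the corresponding statement for the locally compact group $G$, exactly as was done for the power bounded case in Lemma \ref{thm3}. Recall that $B(H)=B_\pi(G)$ and that the correspondence $u\mapsto \tilde u$ is an isometric isomorphism of $B(H)$ onto the subalgebra $B_\pi(G)$ of $B(G)$; in particular it is multiplicative, so $\widetilde{u^k}=(\tilde u)^k$ and $\Vert u^k\Vert_{B(H)}=\Vert(\tilde u)^k\Vert_{B(G)}$ for every $k$. Since $u^0=\1$ corresponds to the constant function $1\in B(G)$, the Ces\`aro means are carried to one another isometrically,
$$\left\Vert \frac1n \sum_{k=0}^{n-1} u^k \right\Vert_{B(H)} = \left\Vert \frac1n\sum_{k=0}^{n-1}(\tilde u)^k \right\Vert_{B(G)},$$
so that $u$ is Ces\`aro bounded in $B(H)$ if and only if $\tilde u$ is Ces\`aro bounded in $B(G)$.

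Assuming $u$ (equivalently $\tilde u$) Ces\`aro bounded, I would invoke the group version of the present proposition, namely that a Ces\`aro bounded element $v\in B(G)$ has $\{x\in G:v(x)=1\}\in\sR_c(G)$ (this is the Ces\`aro analogue, due to Mustafayev \cite{mus19}, of \cite[Theorem 4.1]{klu10}). Applying this to $v=\tilde u$ and using Remark \ref{rem1}, which identifies $\{x\in G:\tilde u(x)=1\}$ with $F_u=p^{-1}(\dot F_u)$, gives $p^{-1}(\dot F_u)\in\sR_c(G)$. By the very definition of $\sR_c(H)$ this is exactly the statement $\dot F_u\in\sR_c(H)$, which is what we want.

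The substance of the argument therefore lies entirely in the group-level theorem, and this is where I expect the only real difficulty. A self-contained proof inside $B(H)$ would run through the weak$^*$ cluster points of the bounded net of Ces\`aro means $M_n=\frac1n\sum_{k=0}^{n-1}u^k$ in $B(H)=C^*(H)^*$: Ces\`aro boundedness first forces $\Vert u\Vert_\infty\le 1$ (otherwise $M_n$ blows up pointwise), whence $M_n\to\1_{\dot F_u}$ pointwise; one then shows $\wsn \tfrac1n u^n=0$ using $\Vert u\Vert_\infty\le 1$ and dominated convergence against the dense subspace $L^1(H)\subseteq C^*(H)$, so that any weak$^*$ cluster point $\theta$ satisfies $u\theta=\theta$ and, by the separate weak$^*$ continuity of multiplication (Lemma \ref{lem2}), is idempotent, hence of the form $\1_{\dot E}$ with $\dot E\in\sR_c(H)$ by Lemma \ref{lem7}. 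The obstacle is that evaluating $\theta$ against $L^1(H)$ via dominated convergence only identifies $\dot E$ with $\dot F_u$ up to a Haar-null set, forcing $\dot E=\dot F_u^\circ$ and leaving the closed null boundary $\dot F_u\setminus\dot F_u^\circ$ unaccounted for; capturing this boundary inside the coset ring is precisely the content supplied by the group-level theorem, which is why I would prefer the reduction above.
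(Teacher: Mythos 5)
Your proof is correct and follows exactly the paper's own argument: the paper likewise observes that Ces\`aro boundedness of $u$ passes to $\tilde u\in B(G)$, invokes Mustafayev's group-level result \cite[Proposition 2.4]{mus19} to get $F_{\tilde u}=p^{-1}(\dot F_u)\in\sR_c(G)$, and concludes $\dot F_u\in\sR_c(H)$ by definition. Your closing discussion of why a purely intrinsic argument in $B(H)$ would stall at the boundary $\dot F_u\setminus\dot F_u^\circ$ is a sensible aside, but the reduction is all that is needed and is what the paper does.
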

	
	\begin{proof}
		Since $ u \in B(H) $ is Ces\`aro bounded, the corresponding $ \tilde u \in B(G) $ is Ces\`aro bounded. Thus, by \cite[Proposition 2.4]{mus19},  $p^{-1 }(\dot F_u) =F_{\tilde u}   \in \sR_c(G),$ which is equivalent to saying that $ \dot F_u \in \sR_c(H).$
	\end{proof}
	The following is the analogue of Lemma \ref{lem3}.
	\begin{lemma} \label{lem10}
		Let $H$ be an ultraspherical hypergroup associated to a locally compact group $G$. Let $ u \in B(H) $  and suppose that 
		$$\theta =\wsn \frac{1}{n}\sum_{k=0}^{n-1}u^k   $$
		exists. Then
			\item[(i)] $ \theta $ is an idempotent and satisfies $ \theta u =\theta $. More precisely, $   \theta = {\bf 1}_{\dot F_{u}^\circ}.$
			\item[(ii)] The set $ F_{u}^\circ $ is closed in $H$ and $\lambda(F_u \setminus F_{u}^\circ)=0$.
	\end{lemma}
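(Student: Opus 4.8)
The plan is to follow the template of the proof of Lemma \ref{lem3}, replacing the powers $u^n$ by the Cesàro averages and inserting one extra averaging step. Write $A_n=\frac1n\sum_{k=0}^{n-1}u^k$. Since the weak* limit $\theta$ is assumed to exist, the sequence $(A_n)$ is norm bounded in $B(H)=C^*(H)^*$ by the uniform boundedness principle; fix $M:=\sup_n\|A_n\|_{B(H)}<\infty$. The first task is the relation $u\theta=\theta$. From $(n+1)A_{n+1}=nA_n+u^n$ I obtain $\frac1n u^n=\frac{n+1}{n}A_{n+1}-A_n$, and since $\frac{n+1}{n}A_{n+1}\to\theta$ (the correction $\frac1n A_{n+1}\to 0$ in norm by boundedness) and $A_n\to\theta$ weak*, it follows that $\wsn \frac1n u^n=0$. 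On the other hand the telescoping identity $uA_n-A_n=\frac1n(u^n-\1)$ holds, so applying the separate weak* continuity of multiplication from Lemma \ref{lem2} to $uA_n\to u\theta$ gives $u\theta-\theta=\wsn\frac1n(u^n-\1)=0$, that is, $u\theta=\theta$.

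Next I would verify that $\theta$ is an idempotent. Iterating $u\theta=\theta$ gives $u^k\theta=\theta$ for every $k$, hence $A_n\theta=\theta$ for all $n$; letting $n\to\infty$ and invoking Lemma \ref{lem2} once more yields $\theta^2=\theta$. By the Host-type idempotent theorem (Lemma \ref{lem7}) together with Remark \ref{rem2}, there is a clopen set $\dot E\in\sR_c(H)$ with $\theta=\1_{\dot E}$. Because $u\theta=\theta$ forces $u\equiv 1$ on $\dot E$, we have $\dot E\subseteq\dot F_u$, and since $\dot E$ is open, $\dot E\subseteq\dot F_u^\circ$.

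The heart of the argument is the reverse identification $\dot E=\dot F_u^\circ$ together with (ii). First I claim $\|u\|_\infty\le 1$: since $B(H)$ embeds contractively into $C_b(H)$, we have $|A_n(\dot x)|\le M$ for all $\dot x$ and $n$, and evaluating the geometric sum $A_n(\dot x)=\frac1n\frac{1-u(\dot x)^n}{1-u(\dot x)}$ at a point with $|u(\dot x)|>1$ would force $|A_n(\dot x)|\to\infty$, a contradiction. With $\|u\|_\infty\le 1$ in hand, the elementary scalar computation shows $A_n(\dot x)\to\1_{\dot F_u}(\dot x)$ pointwise on $H$ (the average of a scalar of modulus at most $1$ and different from $1$ tends to $0$). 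Since $|A_n|\le M$ and $f\in L^1(H)\subseteq C^*(H)$, the dominated convergence theorem gives $\langle\theta,\lambda(f)\rangle=\lim_n\int_H A_n f\,d\lambda=\int_{\dot F_u}f\,d\lambda$, while $\theta=\1_{\dot E}$ gives $\langle\theta,\lambda(f)\rangle=\int_{\dot E}f\,d\lambda$; by density of $L^1(H)$ in $C^*(H)$ this holds for all $f$, so $\lambda(\dot E\,\triangle\,\dot F_u)=0$. As $\dot E\subseteq\dot F_u^\circ\subseteq\dot F_u$, this yields both $\lambda(\dot F_u\setminus\dot F_u^\circ)=0$ and $\lambda(\dot F_u^\circ\setminus\dot E)=0$; the latter set is open, hence empty because the Haar measure has full support. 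Therefore $\dot E=\dot F_u^\circ$, giving $\theta=\1_{\dot F_u^\circ}$, and $\dot F_u^\circ=\dot E$ is clopen, in particular closed in $H$, which is exactly (ii).

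I expect the main obstacle to be this last paragraph: the delicate points are deducing $\|u\|_\infty\le 1$ from boundedness of the averages alone, running the dominated convergence argument against the $L^1(H)$-part of $C^*(H)$ (the same mechanism used in Lemma \ref{lem14}(i)), and upgrading the resulting almost-everywhere equality of the two open sets $\dot E$ and $\dot F_u^\circ$ to an honest set equality via full support of the Haar measure. Everything preceding it is a routine transcription of the weak*-limit manipulations from the proof of Lemma \ref{lem3}, and the remaining measure-theoretic bookkeeping can be imported essentially verbatim from \cite[Lemma 3.1]{kau13}.
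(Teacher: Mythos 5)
Your proof is correct and follows essentially the same route as the paper: the algebraic manipulation of the Ces\`aro averages to obtain $u\theta=\theta$, separate weak$^*$ continuity of multiplication for idempotency, Host's theorem (Lemma \ref{lem7} with Remark \ref{rem2}) to write $\theta=\1_{\dot E}$ for a clopen $\dot E$, and then the pointwise-convergence/dominated-convergence identification of $\dot E$ with $\dot F_u^\circ$. The only difference is that you spell out the final measure-theoretic step (boundedness of the averages, $\|u\|_\infty\le 1$, and the full-support argument) in detail, whereas the paper defers it to the argument of Lemma \ref{lem3} and ultimately to \cite[Lemma 3.1]{kau13}.
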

	\begin{proof}
		First note that
		\begin{align*}
			\theta u &= \wsn \frac{1}{n}\sum_{k=1}^{n}u^k  \\
			&= \wsn \left[\frac{1}{n}\sum_{k=0}^{n}u^k -  \frac{1}{n}\cdot 1\right]\\
			&=\wsn \left[\frac{n+1}{n}(\frac{1}{n+1}\sum_{k=0}^{n}u^k)- \frac{1}{n}\cdot 1\right]  \\
			&=\wsn \frac{1}{n+1}\sum_{k=0}^{n}u^k = \theta.
		\end{align*}
		By Lemma \ref{lem2}  multiplication in $B(H)$ is  separately weak* continuous and therefore $\theta=\theta^2,$ i.e., $\theta$ is an idempotent in $B(H).$ So, by Lemma \ref{lem7} and Remark \ref{rem2} there is clopen subset $\dot{E}$ of $H$ in $\sR_c(H)$ such that $\theta=\1_{\dot{E}}.$ Now, the remaining can be proved by using the same arguments as those employed in the proof of Lemma \ref{lem3}.
	\end{proof}
	
	Our next result is an analogue of Corollary \ref{cor1}.
	\begin{proposition}\label{pro5}
		Let $H$ be an ultraspherical hypergroup associated to a locally compact group $G$. Let $ u $ be a Ces\`aro bounded element of $ B(H) $. Then    
		$$\1_{\dot F^\circ_u} =\wsn \frac{1}{n}\sum_{k=0}^{n-1}u^k.   $$
	\end{proposition}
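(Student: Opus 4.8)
The plan is to isolate the one nontrivial point, namely the \emph{existence} of the weak$^*$ limit: once $\theta:=\wsn\frac1n\sum_{k=0}^{n-1}u^k$ is known to exist, Lemma \ref{lem10} immediately forces $\theta=\1_{\dot F_u^\circ}$. So the whole proof reduces to producing this limit, and for that I would argue by pointwise convergence together with a density/boundedness upgrade.

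First I would record the uniform bound $\|u\|_\infty\le 1$. For each $\dot x\in H$ the point evaluation $v\mapsto v(\dot x)$ is a character of the commutative Banach algebra $B(H)$ and hence has norm at most one; applying it to the Cesàro means gives $\bigl|\frac1n\sum_{k=0}^{n-1}u(\dot x)^k\bigr|\le\bigl\|\frac1n\sum_{k=0}^{n-1}u^k\bigr\|_{B(H)}\le C$, where $C:=\sup_n\bigl\|\frac1n\sum_{k=0}^{n-1}u^k\bigr\|_{B(H)}<\infty$ by hypothesis. If $|u(\dot x)|>1$ held, the scalar geometric Cesàro sum would grow like $|u(\dot x)|^n/n$, contradicting this bound; hence $|u(\dot x)|\le 1$ for every $\dot x$. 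A direct evaluation of the geometric sums then yields the pointwise limit
$$\frac1n\sum_{k=0}^{n-1}u(\dot x)^k\longrightarrow \1_{\dot F_u}(\dot x)\qquad(\dot x\in H),$$
since the sum equals $1$ on $\dot F_u$, while for $|u(\dot x)|<1$, and for $|u(\dot x)|=1$ with $u(\dot x)\ne 1$, the partial sums $\sum_{k=0}^{n-1}u(\dot x)^k$ remain bounded and are divided by $n$.

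Next I would pass from the pointwise statement to weak$^*$ convergence. Because $|u(\dot x)|\le 1$, the Cesàro means satisfy $\bigl|\frac1n\sum_{k=0}^{n-1}u^k(\dot x)\bigr|\le 1$, so for each $f\in L^1(H)$ Lebesgue's dominated convergence theorem (with dominating function $|f|$) applies and, using the pairing of Lemma \ref{lem2}, gives
$$\Bigl\langle \tfrac1n\sum_{k=0}^{n-1}u^k,\,\lambda(f)\Bigr\rangle=\int_H\Bigl(\tfrac1n\sum_{k=0}^{n-1}u^k(\dot x)\Bigr)f(\dot x)\,d\dot x\longrightarrow\int_{\dot F_u}f(\dot x)\,d\dot x.$$
Thus $\bigl\langle\frac1n\sum_{k=0}^{n-1}u^k,\varrho\bigr\rangle$ converges for every $\varrho$ in the dense subspace $\{\lambda(f):f\in L^1(H)\}$ of $C^*(H)$. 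Since the sequence $\{\frac1n\sum_{k=0}^{n-1}u^k\}$ is bounded in $B(H)=C^*(H)^*$ (again by Cesàro boundedness), a routine $3\varepsilon$-argument promotes this to convergence of $\bigl\langle\frac1n\sum_{k=0}^{n-1}u^k,\varrho\bigr\rangle$ for \emph{all} $\varrho\in C^*(H)$; the limit functional is bounded and therefore defines $\theta\in B(H)$. Hence the weak$^*$ limit exists, and Lemma \ref{lem10} identifies it as $\1_{\dot F_u^\circ}$ --- consistently with the display above, since that lemma also gives $\lambda(\dot F_u\setminus\dot F_u^\circ)=0$.

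The main obstacle is the first step: converting the norm hypothesis of Cesàro boundedness in $B(H)$ into the \emph{uniform pointwise} bound $\|u\|_\infty\le 1$, which is exactly what licenses the use of dominated convergence. I note that one cannot shortcut this by extracting a weak$^*$ cluster point via Banach--Alaoglu and invoking Lemma \ref{lem10}: the telescoping identity $\theta u=\theta$ used there requires the shifted means $\frac1{n+1}\sum_{k=0}^{n}u^k$ to share the limit, and for a merely Cesàro bounded $u$ one only has $\|u^n\|_{B(H)}=O(n)$, so consecutive Cesàro means need not merge and subnet limits need not agree. This is why the direct pointwise-plus-density route, resting on $\|u\|_\infty\le1$, is the one I would pursue.
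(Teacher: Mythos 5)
Your proposal is correct and follows essentially the same route as the paper: pointwise convergence of the Ces\`aro means to $\1_{\dot F_u}$, dominated convergence against each $f\in L^1(H)$, and then density of $\lambda(L^1(H))$ in $C^*(H)$ together with boundedness of the means to upgrade to weak$^*$ convergence, identifying the limit via $\lambda(\dot F_u\setminus\dot F_u^\circ)=0$. The only difference is that you spell out the justification of $\Vert u\Vert_\infty\le 1$ (via point evaluations and the growth of scalar geometric Ces\`aro sums), which the paper leaves implicit.
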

	
	\begin{proof}
		Let $ u $ be a Ces\`aro bounded element of $ B(H) $. Then for each $ f \in L^1(H) $,  the sequence $ (\frac{1}{n}\sum_{k=0}^{n-1}u^k )_n$ converges pointwise to $ \1_{\dot F_u} f$ as $ n\rightarrow \infty $ and is also dominated $\vert f \vert.$ Hence by the dominated convergence theorem
		\begin{align*}
			\langle \psi_n , \lambda (f) \rangle
			&= \int_H \psi_n(\dot x) f(\dot x) d\dot x
			\rightarrow  \int_H \1_{\dot F_u}(\dot x) f(\dot x) d\dot x\\
			&=\int_H \1_{\dot F^\circ_u}(\dot x) f(\dot x) d\dot x + \int_H \1_{ \partial \dot F_u}(\dot x) f(\dot x) d\dot x \\
			&=\int_H \1_{\dot F^\circ_u}(\dot x) f(\dot x) d\dot x   \qquad (\text{since}~\lambda(F_u \setminus F_{u}^\circ)=0)\\
			&= \langle \1_{\dot F^\circ_u} , \lambda(f)  \rangle.
		\end{align*}
		
		Now, using the density  of $ \lambda (L^1(H)) $ in $ C^*(H) $ and the boundedness of the  sequence $ (\frac{1}{n}\sum_{k=0}^{n-1}u^k )_n,$ the conclusion follows.
	\end{proof}
	
	\begin{proposition} \label{pro6}
		Let $H$ be an ultraspherical hypergroup and let $ u $ be a Ces\`aro bounded element of $ B(H) $ such that $\lim_n \frac{1}{n} \Vert u^n v \Vert_{A(H)}=0 $ for all $ v \in A(H) $. Consider the following statements:
			\item[(i)] $\theta_v = \lim_n\frac{1}{n}\sum_{k=0}^{n-1}u^k v$ exists in $ A(H)   $-norm topology for all $ v \in A(H).$
			\item[(ii)] $ \dot F_u $ is an open subset of $H.$
			\item[(iii)] $ \theta_v = \1_{\dot F_u} v $ and $\Vert  \1_{\dot F_u}  v \Vert _{A(H)}= dist ( \overline{(1-u) A(H)}, v)$ for all $ v \in A(H).$
   
		Then the following assertions hold: $(i)\Longleftrightarrow (ii)\Longrightarrow(iii).$ Furthermore, if u is Ces\`aro contractive, then (iii) implies the following:
		\item[(iv)]  $\Vert  \1_{\dot F_u}  v \Vert _{A(H)}= dist ( \overline{(1-u) A(H)} , v)$ for all $ v \in A(H) $.
	\end{proposition}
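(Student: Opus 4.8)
The plan is to read assertion $(i)$ as a \emph{mean ergodic theorem} for the multiplication operator $M_u\colon A(H)\to A(H)$, $M_u v=uv$. Since $u$ is Ces\`aro bounded (so $\sup_n\|\frac1n\sum_{k=0}^{n-1}M_u^k\|<\infty$) and $\frac1n\|u^n v\|_{A(H)}\to 0$ for every $v$, the classical mean ergodic theorem applies: the averages $A_n v=\frac1n\sum_{k=0}^{n-1}u^k v$ converge in $A(H)$-norm for all $v$ if and only if $A(H)=\mathrm{Fix}(M_u)\oplus\overline{(1-u)A(H)}$, and then $\lim_n A_n$ is the bounded projection onto $\mathrm{Fix}(M_u)$ along $\overline{(1-u)A(H)}$. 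Here $\mathrm{Fix}(M_u)=\{v\in A(H):uv=v\}$ is exactly the set of $v$ vanishing on $H\setminus\dot F_u$. I would organise everything around this dichotomy: prove $(i)\Rightarrow(ii)$ by a continuity obstruction, prove $(ii)\Rightarrow(i)$ together with the identification $\theta_v=\1_{\dot F_u}v$, and then read off $(iii)$ and, under contractivity, $(iv)$.

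For $(i)\Rightarrow(ii)$ the argument is clean and does not even need the decomposition. Since $A(H)\hookrightarrow C_0(H)$ with $\|\cdot\|_\infty\le\|\cdot\|_{A(H)}$, norm convergence of $A_n v$ forces convergence uniformly, hence pointwise. A direct computation of the Ces\`aro mean of the unimodular-or-subunimodular numbers $u(\dot x)^k$ shows $A_n v(\dot x)\to\1_{\dot F_u}(\dot x)v(\dot x)$ at every point, so $\theta_v=\1_{\dot F_u}v$ lies in $A(H)$ and is therefore continuous for \emph{every} $v\in A(H)$. Using regularity and Tauberianness of $A(H)$ I would pick, for any hypothetical boundary point $\dot x_0\in\partial\dot F_u$, a $v$ with $v(\dot x_0)\neq0$; continuity of $\1_{\dot F_u}v$ at $\dot x_0$ then fails (the value jumps between $v(\dot x_0)$ and $0$ along the two sides of the boundary), a contradiction. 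Hence $\partial\dot F_u=\emptyset$, and since $\dot F_u\in\sR_c(H)$ is already closed by Proposition \ref{pro3}, $\dot F_u$ is clopen, giving $(ii)$.

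For $(ii)\Rightarrow(i)$ and $(iii)$: when $\dot F_u$ is clopen, $\1_{\dot F_u}\in B(H)$ is an idempotent by Lemma \ref{lem7}, hence a multiplier of $A(H)$, and $\dot F_u^\circ=\dot F_u$, so Proposition \ref{pro5} already identifies the weak$^*$ limit as $\1_{\dot F_u}$. Writing $v=\1_{\dot F_u}v+\1_{H\setminus\dot F_u}v=:v_1+v_2$, one checks $u^k v_1=v_1$, so $A_n v_1=v_1\in\mathrm{Fix}(M_u)$, while $v_2\in k_{A(H)}(\dot F_u)$. The whole statement reduces to showing $A_n v_2\to0$ in norm, equivalently to the spectral synthesis identity $\overline{(1-u)A(H)}=k_{A(H)}(\dot F_u)$ (the inclusion $\subseteq$ is immediate since $1-u$ vanishes on $\dot F_u$; the zero set of $\overline{(1-u)A(H)}$ is $\dot F_u$ by regularity). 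I expect \textbf{this synthesis step to be the main obstacle}. The route I would take is local: on any compact subset of the clopen set $H\setminus\dot F_u$ the function $1-u$ is continuous and zero-free, hence bounded below, so $1-u$ is locally invertible there; combining this with the compactly supported functions being dense (Tauberianness), a Ditkin-type division argument together with the boundedness of the averages $A_n$ should yield $k_{A(H)}(\dot F_u)\subseteq\overline{(1-u)A(H)}$. Granting this, $\lim_n A_n v=\1_{\dot F_u}v$, which is the first half of $(iii)$.

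Finally, for the implication $(iii)\Rightarrow(iv)$ under the extra hypothesis that $u$ is Ces\`aro contractive, I would argue abstractly. Ces\`aro contractivity gives $\|A_n\|\le1$, so the limiting projection $P=M_{\1_{\dot F_u}}$ satisfies $\|P\|\le\liminf_n\|A_n\|\le1$. For a norm-one projection one has $\mathrm{dist}(v,\ker P)=\|Pv\|$: indeed $\|v-w\|\ge\|P(v-w)\|=\|Pv\|$ for all $w\in\ker P$, with equality at $w=v-Pv$. Since the synthesis step identifies $\ker P=\overline{(1-u)A(H)}$ and $Pv=\1_{\dot F_u}v$, this reads $\|\1_{\dot F_u}v\|_{A(H)}=\mathrm{dist}\big(\overline{(1-u)A(H)},v\big)$, which is $(iv)$. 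Thus the only genuinely hard ingredient, shared by the two nontrivial implications, is the clopen-set synthesis $\overline{(1-u)A(H)}=k_{A(H)}(\dot F_u)$; everything else is bookkeeping around the mean ergodic theorem and the weak$^*$ analysis already set up in Lemma \ref{lem2}, Proposition \ref{pro5}, and Lemma \ref{lem10}.
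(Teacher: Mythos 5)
Your proposal is a genuine proof where the paper offers none: the paper's entire ``proof'' of Proposition \ref{pro6} is the sentence ``The proof of this is exactly the same as in \cite[Theorem 2.8]{mus19}'', so there is nothing internal to compare against except that citation. Your mean-ergodic-theorem skeleton is the right one and matches what that reference does: Ces\`aro boundedness of $u$ in $B(H)$ gives $\sup_n\|A_n\|<\infty$ on the ideal $A(H)$, the hypothesis $\frac1n\|u^nv\|\to0$ puts you in the setting of the classical mean ergodic theorem, and your $(i)\Rightarrow(ii)$ argument (Ces\`aro boundedness forces $\|u\|_\infty\le1$, the pointwise limit of the averages is $\1_{\dot F_u}v$, and continuity of this limit for a $v$ not vanishing at a non-interior point of the closed set $\dot F_u$ is impossible) is correct and complete. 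The identification $\mathrm{Fix}(M_u)=k_{A(H)}(H\setminus\dot F_u)$, the decomposition $v=\1_{\dot F_u}v+(1-\1_{\dot F_u})v$, and the norm-one-projection argument for $(iv)$ under Ces\`aro contractivity are all fine.

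The one step you flag as the main obstacle --- $k_{A(H)}(\dot F_u)\subseteq\overline{(1-u)A(H)}$ --- is real, but the route you sketch (local invertibility of $1-u$ plus a ``Ditkin-type division argument'') is both harder to justify than you suggest (pointwise lower bounds on $|1-u|$ do not by themselves give division in $A(H)$) and unnecessary. Under $(ii)$ the set $\dot F_u$ is clopen and lies in $\sR_c(H)$ by Proposition \ref{pro3}, so $\1_{\dot F_u}\in B(H)$ by Lemma \ref{lem7}. Given $v\in k_{A(H)}(\dot F_u)$, take $v_m$ compactly supported with $v_m\to v$ (Tauberianness); then $(1-\1_{\dot F_u})v_m\to(1-\1_{\dot F_u})v=v$ and each $(1-\1_{\dot F_u})v_m$ has compact support contained in the closed set $H\setminus\dot F_u$, i.e.\ lies in $j_{A(H)}(\dot F_u)$. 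Since $\overline{(1-u)A(H)}$ is a closed ideal with zero set $\dot F_u$, the standard sandwich $j_\mA(E)\subseteq I\subseteq k_\mA(E)$ quoted in the paper's preliminaries finishes the job. (Do not reach for Lemma \ref{lem11} here: it assumes $G$ amenable, which Proposition \ref{pro6} does not.) With that step closed, your argument is complete.
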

	\begin{proof} 
		The proof of this is exactly the same as in \cite [Theorem 2.8]{mus19}
	\end{proof}
	
	\begin{proposition} \label{pro4 }
		Let $H$ be an ultraspherical hypergroup such that the underlying locally compact group $G$ is amenable. Then for any Ces\`aro bounded element
		$ u \in B(H) $, we have the following:
		\item[(i)]
		The sets $ \overline{(1-u) A(H)} $ and $ I(\dot F_u) $ are equal.
		
		\item[(ii)]
		The ideal $ \overline{(1-u) A(H)} $ has a bounded approximate identity.
		
		\item[(iii)]
		Let $ (e_\alpha) $ be a bounded approximate identity of $\overline{(1-u) A(H)}  $. Then, after passing to a subnet if necessary, $e_\alpha \rightarrow \1_{H\setminus \dot F_u^\circ} $ in the weak$ ^* $-topology $ \sigma(B_\lambda (H) , C^*_\lambda(H)) $.
		\item[(iv)]
		There exists an idempotent $ \theta \in B(H) $ such that 
		$ \overline{(1-u) A(H)}^{weak^*}= \theta  B(H) $. More precisely, $ \theta = \1_{H\setminus \dot F_u^\circ} $.
		
		Moreover, If $ \dot F_u $ is an open subset of $ H $, then
		\item[(v)] $ \overline{(1-u) A(H)}=I(\dot F_u) = \1_{H\setminus \dot F_u}  A(H) $.
	\end{proposition}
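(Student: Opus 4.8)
The plan is to first identify the zero set of the ideal $\overline{(1-u)A(H)}$ and settle the synthesis question (part (i)), and then to read off (ii)--(v) from the idempotent produced by a bounded approximate identity, keeping in mind that amenability gives $B_\lambda(H)=B(H)$ and $C^*_\lambda(H)=C^*(H)$ by Lemma \ref{lem12}. For (i), the inclusion $\overline{(1-u)A(H)}\subseteq I(\dot F_u)$ is immediate, since $1-u$ vanishes on $\dot F_u$ and $I(\dot F_u)$ is closed. Using regularity of $A(H)$ one checks the hull of $\overline{(1-u)A(H)}$ is exactly $\dot F_u$: if $\dot x\notin\dot F_u$ then $(1-u)(\dot x)\neq0$ and some $v\in A(H)$ is nonzero at $\dot x$, so $(1-u)v$ is too. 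Thus $\overline{(1-u)A(H)}$ is a closed ideal with hull $\dot F_u$ lying between $J_{A(H)}(\dot F_u)$ and $k_{A(H)}(\dot F_u)=I(\dot F_u)$. Since $\dot F_u\in\sR_c(H)$ by Proposition \ref{pro3} and $G$ is amenable, $\dot F_u$ is a set of spectral synthesis for $A(H)$; transferring the corresponding statement for coset--ring sets in the amenable group $G$ through $p\colon G\to H$ (this is the substance of the analogue of \cite{mus19}) forces $J_{A(H)}(\dot F_u)=I(\dot F_u)$, so equality holds throughout.

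For (ii) I would invoke the characterization of closed ideals of $A(H)$ admitting a bounded approximate identity: as $G$ is amenable and the hull $\dot F_u$ of $\overline{(1-u)A(H)}=I(\dot F_u)$ lies in $\sR_c(H)$, the ideal has a bounded approximate identity $(e_\alpha)$ (again the hypergroup version of the amenable group result, obtained from $G$ via $\pi$). Granting this, (iii) becomes a weak$^*$ compactness argument. By Banach--Alaoglu pass to a weak$^*$ cluster point $\theta\in B(H)$ of $(e_\alpha)$. For $v\in I(\dot F_u)$ we have $e_\alpha v\to v$ in norm and $e_\alpha v\to\theta v$ weak$^*$ by Lemma \ref{lem2}, so $\theta v=v$; by regularity $\theta=1$ on $H\setminus\dot F_u$, hence on $H\setminus\dot F_u^\circ$ by continuity. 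On the other hand each $e_\alpha$ vanishes on $\dot F_u\supseteq\dot F_u^\circ$, so for any $f\in L^1(H)$ supported in the open set $\dot F_u^\circ$ we get $\langle\theta,\lambda(f)\rangle=\lim_\alpha\langle e_\alpha,\lambda(f)\rangle=0$, and since $f$ is arbitrary and $\theta$ is continuous, $\theta=0$ on $\dot F_u^\circ$. Hence $\theta=\1_{H\setminus\dot F_u^\circ}$, a genuine idempotent by Corollary \ref{lem9}, Lemma \ref{lem7} and Remark \ref{rem2}; as every cluster point equals this fixed element, the asserted weak$^*$ convergence follows. This is consistent with Proposition \ref{pro5}, the complementary idempotent being $\1_{\dot F_u^\circ}$.

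Part (iv) is then formal. Since $\theta v=v$ for $v\in I(\dot F_u)$, we have $I(\dot F_u)\subseteq\theta B(H)=\{w\in B(H):\theta w=w\}$, which is weak$^*$ closed because multiplication by $\theta$ is weak$^*$ continuous (Lemma \ref{lem2}); conversely, for $w\in B(H)$ each $e_\alpha w$ lies in $I(\dot F_u)$ (as $A(H)$ is an ideal of $B(H)$ and $e_\alpha w$ vanishes on $\dot F_u$), so $\theta w=\wsn e_\alpha w$ lies in the weak$^*$ closure of $I(\dot F_u)$, giving $\theta B(H)=\overline{(1-u)A(H)}^{\,w^*}$. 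For (v), if $\dot F_u$ is open then it is clopen (being also closed as a member of $\sR_c(H)$), so $\1_{H\setminus\dot F_u}\in B(H)$ and $\dot F_u^\circ=\dot F_u$; a function $v\in A(H)$ vanishes on $\dot F_u$ iff $v=\1_{H\setminus\dot F_u}v$, whence $I(\dot F_u)=\1_{H\setminus\dot F_u}A(H)$, and (i) closes the chain of equalities.

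The main obstacle is the pair (i)--(ii): both rest on the spectral synthesis of coset--ring sets and on the existence of bounded approximate identities for their vanishing ideals in the amenable setting, which must be imported from the amenable group $G$ and reinterpreted on $H$ through $p$ and the projector $\pi$ (paralleling \cite{mus19}). Once these inputs are in place, the idempotent bookkeeping in (iii)--(v) is routine, relying only on Lemmas \ref{lem2} and \ref{lem7}, Remark \ref{rem2}, and the weak$^*$ compactness of the bounded approximate identity.
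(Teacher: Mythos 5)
Your proposal is correct and follows essentially the same route as the paper: parts (i)--(ii) rest on exactly the inputs the paper isolates in Lemma \ref{lem11} (spectral synthesis of sets in $\sR_c(H)$ and bounded approximate identities for their vanishing ideals, imported from the amenable group $G$), and (iii)--(v) are the same weak$^*$-compactness and idempotent bookkeeping. The only cosmetic difference is in (iii), where you identify the cluster point $\theta$ pointwise (equal to $1$ on $H\setminus \dot F_u^\circ$ via $\theta v=v$ and to $0$ on $\dot F_u^\circ$ via testing against $f$ supported there), whereas the paper computes $\lim_\alpha\langle e_\alpha,\lambda(f)\rangle$ directly using $\lambda(\dot F_u\setminus\dot F_u^\circ)=0$; both are valid.
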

	\begin{proof}
		(i). Suppose that $ u \in B(H) $ is Ce\`saro bounded. Then, by Proposition \ref{pro3} and Lemma \ref{lem11}, $ \dot F_u  $  is a set of synthesis  for $ A(H) $. Now, since $ Z( \overline{(1-u) A(H)})= \dot F_u   $, the equality of $ \overline{(1-u) A(H)} $ and $ I(\dot F_u) $ holds.
		
		(ii). This follows from (i) and Lemma \ref{lem11}.
		
		(iii). 
		Let $ (e_\alpha) $ be a bounded approximate identity of $\overline{(1-u) A(H)}  $. Passing to a subnet if necessary, we can assume that $ e_\alpha \rightarrow \theta $ in the weak$ ^* $-topology $ \sigma(B_\lambda (H) , C^*_\lambda(H)) $ for some $ \theta \in B(H) $. Let $ f \in  C_c(H) $. Then 
		\begin{align*}
			\langle e_\alpha , \lambda (f)\rangle 
			&= \int_H e_\alpha(\dot x) f(\dot x) d \dot x\\
			&= \int_{H \setminus \dot F_u} e_\alpha(\dot x) f(\dot x) d \dot x +
			\int_{\dot F _u} e_\alpha(\dot x) f(\dot x) d \dot x 
			\\
			&= \int_{\dot F \setminus  \dot F_u^\circ} e_\alpha(\dot x) f(\dot x) d \dot x +\int_{H \setminus \dot F_u^\circ} e_\alpha(\dot x) f(\dot x) d \dot x   
			\qquad (\text {since}~ e_\alpha|_{\dot F_u}=0)\\
			&= \int_{H \setminus \dot F_u^\circ} e_\alpha(\dot x) f(\dot x) d \dot x
			\qquad (\text {since by Lemma \ref{lem10}},~ \lambda (\dot F_u \setminus \dot F_u^\circ)=0)\\
			&= \int_{H \setminus \dot F_u^\circ} e_\alpha(\dot x) v (\dot x) f(\dot x) d \dot x
			\qquad (v \in I(\dot F_u)  ~\text{with}~ v\mid _{\supp(f)\cap (H \setminus \dot F_u) }=1)\\
			&= \int_H e_\alpha(\dot x) v(\dot x) \1_{H \setminus \dot F_u^\circ}(\dot x) f(\dot x) d \dot x\\
			&= \langle e_\alpha v , \lambda(\1_{H \setminus \dot F_u^\circ} f) \rangle
			\rightarrow \langle  v , \lambda(\1_{H \setminus \dot F_u^\circ} f) \rangle \\
			&= \langle \1_{H \setminus \dot F_u^\circ} , \lambda( f) \rangle, 
		\end{align*}
		where the last equality holds because
		$$  H\setminus  \dot F_u^\circ = (H\setminus \dot F_u )\cup (\dot F_u\setminus  \dot F_u^\circ) \quad \text {and } \quad \lambda (\dot F_u \setminus \dot F_u^\circ)=0.$$
		Now, using the density  of $ \lambda (C_c(H)) $ in $ C^*(H) $ and boundedness of the net $ (e_\alpha) $, we have 
		$$\wsa e_\alpha = \1_{H \setminus \dot F_u^\circ}.$$
		
		(iv). The statement follows from (iii) and the fact that  $\1_{H\setminus \dot F_u^\circ}   $ is an identity for $ \overline{(1-u) A(H)}^{weak^*} $ in $ B(H)$.
		
		(v). If $ \dot F_u  $ is open,  then,  by (iv), $\1_{H\setminus \dot F_u}  $ is an idempotent in $ B(H) $. Thus $ \1_{H\setminus \dot F_u}  A(H) $ is closed ideal of $ A(H) $. Let $ (e_\alpha) $ be a bounded approximate identity of $ A(H) $. Then $ (\1_{H\setminus \dot F_u} e_\alpha) $ is a bounded approximate identity for $ I(\dot F_u) $. By (i), $ \overline{(1-u) A(H)}=I(\dot F_u)$, thus for each $ v \in  \overline{(1-u) A(H)}$, we have
		$$v = \lim_\alpha \1_{H\setminus \dot F_u} e_\alpha v.$$
		Hence, $ v \in \overline{\1_{H\setminus \dot F_u}  A(H)}= \1_{H\setminus \dot F_u}  A(H) $. The reverse inclusion is clear.
	\end{proof}
	The following proposition gives a description the set $F_u,$ where $u$ is a Ces\`aro bounded.
	\begin{proposition} \label{pro7 }
		Let $H$ be an ultraspherical hypergroup. Let $ u $ be a Ces\`aro bounded element of $ B(H) $ satisfying $\lim_n \frac{1}{n} \Vert u^n v \Vert_{A(H)}=0 $ for all $ v \in A(H) $ and $ \dot F_u $ is an open subset of $ H $, then 
		$$ \overline{(1-u) A(H)} = \1_{H\setminus \dot F_u}  A(H). $$
	\end{proposition}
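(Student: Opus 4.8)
The plan is to prove the stated equality by splitting it into two inclusions, after first verifying that the right-hand side is a bona fide closed ideal. Since $u$ is Ces\`aro bounded, Proposition \ref{pro3} gives $\dot F_u\in\sR_c(H)$, so $p^{-1}(\dot F_u)$ is closed in $G$ and hence $\dot F_u$ is closed in $H$; combined with the hypothesis that $\dot F_u$ is open, this shows $\dot F_u$ is clopen. By Lemma \ref{lem7} and Remark \ref{rem2}, both $\1_{\dot F_u}$ and $\1_{H\setminus\dot F_u}=1-\1_{\dot F_u}$ are then idempotents in $B(H)$. Using that $A(H)$ is a $B(H)$-module, multiplication by $\1_{H\setminus\dot F_u}$ is a bounded idempotent operator on $A(H)$, and since the range of a bounded idempotent operator coincides with the kernel of its complementary projection, $\1_{H\setminus\dot F_u}A(H)$ is a closed ideal of $A(H)$. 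This reduces the task to comparing $(1-u)A(H)$ with this fixed closed ideal.

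For the inclusion $\overline{(1-u)A(H)}\subseteq\1_{H\setminus\dot F_u}A(H)$ I would argue pointwise. On $\dot F_u$ one has $u=1$, so the product $\1_{\dot F_u}(1-u)$ vanishes at every point of $H$; being an element of $B(H)$, i.e.\ a continuous function, it is therefore the zero element. Consequently, for every $v\in A(H)$ we get $(1-u)v=\1_{H\setminus\dot F_u}(1-u)v\in\1_{H\setminus\dot F_u}A(H)$, and since the latter ideal is closed, passing to the closure yields this first inclusion.

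The reverse inclusion $\1_{H\setminus\dot F_u}A(H)\subseteq\overline{(1-u)A(H)}$ is where the standing hypothesis $\lim_n\frac1n\Vert u^n v\Vert_{A(H)}=0$ does the work that amenability did in Proposition \ref{pro4 }(v). Because $\dot F_u$ is open, Proposition \ref{pro6} applies and gives, for every $v\in A(H)$, norm-convergence of the Ces\`aro means $\frac1n\sum_{k=0}^{n-1}u^k v$ to $\theta_v=\1_{\dot F_u}v$. Writing $\1_{H\setminus\dot F_u}v=v-\1_{\dot F_u}v=\lim_n\frac1n\sum_{k=0}^{n-1}(1-u^k)v$ and factoring $1-u^k=(1-u)(1+u+\cdots+u^{k-1})$, each Ces\`aro partial sum lies in $(1-u)A(H)$, so the limit lies in $\overline{(1-u)A(H)}$; thus $\1_{H\setminus\dot F_u}v\in\overline{(1-u)A(H)}$ for all $v$. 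Combining the two containments gives the equality. The step I expect to require the most care is the correct invocation of Proposition \ref{pro6}: one must check that its part (i), the $A(H)$-norm convergence of the Ces\`aro means (equivalent to the openness of $\dot F_u$), is precisely what substitutes for the synthesis-plus-bounded-approximate-identity argument, so that no amenability of $G$ is needed; the remaining points—clopenness of $\dot F_u$, closedness of $\1_{H\setminus\dot F_u}A(H)$ via the idempotent-range argument, and the telescoping factorization—are routine once the module structure of $A(H)$ over $B(H)$ is used.
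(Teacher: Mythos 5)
Your argument is correct, and it is essentially the paper's approach: the paper simply defers to \cite[Corollary 2.10]{mus19}, whose proof is exactly this mean-ergodic decomposition — the easy pointwise inclusion $\overline{(1-u)A(H)}\subseteq \1_{H\setminus \dot F_u}A(H)$, plus the reverse inclusion obtained from the norm convergence $\frac1n\sum_{k=0}^{n-1}u^kv\to \1_{\dot F_u}v$ supplied by Proposition \ref{pro6} together with the telescoping factorization $1-u^k=(1-u)(1+\cdots+u^{k-1})$. Your preliminary observations (clopenness of $\dot F_u$, $\1_{H\setminus\dot F_u}$ an idempotent of $B(H)$, closedness of its range in $A(H)$) are the right supporting facts and are consistent with the paper's Lemmas \ref{lem7} and \ref{rem2} and Proposition \ref{pro3}.
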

	\begin{proof}
		The proof of this is exactly same as \cite[Corollary 2.10]{mus19}.
	\end{proof}
	
	\begin{corollary}\label{pro5 }
	    Let $H$ be an ultraspherical hypergroup associated to an amenable locally compact group $G$. Then for any power bounded element
		$ u \in B(H) $, we have the following:
		\item[(i)]
		The sets $ \overline{(1-u) A(H)} $ and $ I(\dot F_u) $ are equal.
		\item[(ii)]
		Let $A_0(u)= \{v \in A(H): \lim_n \Vert u^n v\Vert=0\}$. Then  $A_0(u)   $ and $ I(\dot E_u) $ are equal.
		\item[(iii)]
		The ideal $ \overline{(1-u) B(H)} $ has a bounded approximate identity.
		\item[(iv)]
		There exists an idempotent $ \theta \in B(H) $ such that 
		$ \overline{(1-u) B(H)}^{weak^*}= \theta  B(H) $. 
	\end{corollary}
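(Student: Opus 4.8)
The plan is to deduce all four assertions from the results already established for Ces\`aro bounded elements, after the elementary observation that a power bounded element is automatically Ces\`aro bounded: if $\sup_n\|u^n\|=\gamma<\infty$, then $\|\tfrac1n\sum_{k=0}^{n-1}u^k\|\le\gamma$ for every $n$. Granting this, part (i) is nothing but Proposition \ref{pro4 }(i) applied to $u$, since $G$ is amenable.

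For (ii) I would prove the two inclusions separately. The inclusion $A_0(u)\subseteq I(\dot E_u)$ is the soft one: because $\|\cdot\|_\infty\le\|\cdot\|_{A(H)}$, any $v$ with $\|u^nv\|_{A(H)}\to0$ satisfies $u^n(\dot x)v(\dot x)\to0$ for every $\dot x$, and on $\dot E_u$ we have $|u^n(\dot x)|=1$, forcing $v=0$ there; since $\dot E_u\in\sR_c(H)$ by Lemma \ref{thm3}, it is a set of synthesis for $A(H)$ (Lemma \ref{lem11}), so $A_0(u)\subseteq k_{A(H)}(\dot E_u)=I(\dot E_u)$. The reverse inclusion is the substantive part, and I would route it through Theorem \ref{thm4} with $C=\dot E_u$. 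Put $w=|u|^2$; by Lemma \ref{PBmod} (and its proof) $w$ is power bounded, hence Ces\`aro bounded, and $\dot F_w=\dot E_u$. Applying Proposition \ref{pro4 }(i) to $w$ gives $\overline{(1-w)A(H)}=I(\dot F_w)=I(\dot E_u)$. A telescoping computation then verifies the hypothesis (i) of Theorem \ref{thm4}: for $v=(1-w)a\in(1-w)A(H)$,
\begin{equation*}
\frac{1}{n}\sum_{k=0}^{n-1}|u|^{2k}v=\frac{1}{n}\sum_{k=0}^{n-1}\bigl(w^{k}-w^{k+1}\bigr)a=\frac{1}{n}\,(1-w^{n})\,a ,
\end{equation*}
whose $A(H)$-norm is at most $\tfrac1n\bigl(1+\sup_m\|w^m\|\bigr)\|a\|_{A(H)}\to0$; as the Ces\`aro averages are uniformly bounded on $A(H)$, this extends by density to all $v\in\overline{(1-w)A(H)}=I(\dot E_u)=J_{A(H)}(\dot E_u)$. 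Theorem \ref{thm4} then gives $\|u^nv\|_{A(H)}\to0$ for every $v\in J_{A(H)}(\dot E_u)=I(\dot E_u)$, i.e.\ $I(\dot E_u)\subseteq A_0(u)$, which completes (ii).

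For (iii) and (iv) the idea is to transport the $A(H)$-statements of Proposition \ref{pro4 } to $B(H)$ using that $A(H)$ is weak$^*$ dense in $B(H)$. Since $G$ is amenable, Lemma \ref{lem12} yields $B_\lambda(H)=B(H)$ and a bounded approximate identity $(e_\alpha)$ of $A(H)$ with $e_\alpha\to1$ in $\sigma(B_\lambda(H),C^*_\lambda(H))$; as multiplication is separately weak$^*$ continuous (Lemma \ref{lem2}), $e_\alpha b\to b$ weak$^*$ and $e_\alpha b\in A(H)$ for every $b\in B(H)$. Hence $(1-u)b=\wsa (1-u)e_\alpha b$ with $(1-u)e_\alpha b\in(1-u)A(H)$, which gives $\overline{(1-u)B(H)}^{weak^*}=\overline{(1-u)A(H)}^{weak^*}$. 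Proposition \ref{pro4 }(iv), applied to $u$ as a Ces\`aro bounded element, identifies the latter with $\theta B(H)$ for the idempotent $\theta=\1_{H\setminus\dot F_u^\circ}$, which is exactly (iv). For (iii) I would take the bounded approximate identity $(f_\beta)$ of $\overline{(1-u)A(H)}$ furnished by Proposition \ref{pro4 }(ii), which by Proposition \ref{pro4 }(iii) satisfies $f_\beta\to\theta$ weak$^*$; since $(1-u)=(1-u)\theta$ one has $\theta w=w$ for all $w\in\overline{(1-u)B(H)}$, so $f_\beta w\to w$ weak$^*$, making $(f_\beta)$ a bounded weak$^*$ approximate identity for $\overline{(1-u)B(H)}$. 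I would then upgrade it to a genuine norm bounded approximate identity by the standard convex-combination argument, exactly as in the $B(G)$ setting of \cite{mus19}.

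The main obstacle is precisely this last step of (iii): passing from a weak$^*$ approximate identity to a norm bounded approximate identity for the $B(H)$-ideal. Everything else reduces cleanly either to Proposition \ref{pro4 } or to the telescoping identity feeding Theorem \ref{thm4}, whereas here the weak$^*$ density of $A(H)$ only produces weak$^*$ convergence of $f_\beta w$ to $w$, and converting this to norm convergence must be argued with care, since the $B(H)$-norm is not controlled by the weak$^*$ limit.
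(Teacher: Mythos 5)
Your reduction of (i) to Proposition \ref{pro4 }(i) via the observation that power boundedness implies Ces\`aro boundedness is exactly the paper's route; in fact the paper disposes of the entire corollary with that one sentence, so for (ii)--(iv) you are supplying arguments the paper leaves implicit. Your proof of (ii) is sound and is the right way to fill that gap: the soft inclusion via $\Vert\cdot\Vert_\infty\leq\Vert\cdot\Vert_{A(H)}$, and the reverse inclusion by feeding the telescoping identity for $w=|u|^2$ into Theorem \ref{thm4} with $C=\dot E_u$, using $\dot F_w=\dot E_u$ together with Lemma \ref{thm3} and Lemma \ref{lem11} to identify $I(\dot E_u)=J_{A(H)}(\dot E_u)=\overline{(1-w)A(H)}$. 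Your proof of (iv), via $\overline{(1-u)B(H)}^{weak^*}=\overline{(1-u)A(H)}^{weak^*}$, Lemma \ref{lem12} and the separate weak$^*$ continuity of multiplication from Lemma \ref{lem2}, is also correct.

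The genuine gap is the final step of (iii), precisely where you flag it. From $f_\beta\to\theta$ in $\sigma(B_\lambda(H),C^*_\lambda(H))$ and $\theta w=w$ you only obtain a bounded \emph{weak$^*$} approximate identity for $\overline{(1-u)B(H)}$, and the standard convex-combination upgrade does not apply: Mazur's theorem identifies the norm-closed convex hull with the \emph{weak}-closed convex hull, whereas here the convergence is only in the weak$^*$ topology of $B(H)=C^*(H)^*$, whose closed convex hulls may be strictly larger. As written, this step fails. It can be repaired directly, bypassing weak$^*$ limits altogether: put $v=\frac{1+u}{2}$ and $g_n=1-v^n=\frac{1-u}{2}\sum_{k=0}^{n-1}v^k\in(1-u)B(H)$. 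Then $\Vert g_n\Vert\leq 1+\sup_m\Vert v^m\Vert$ and, for $w=(1-u)b$ with $b\in B(H)$,
$$\Vert w-g_nw\Vert=\Vert v^n(1-u)b\Vert=2\Vert(v^n-v^{n+1})b\Vert\leq 2\Vert v^{n}-v^{n+1}\Vert\,\Vert b\Vert\rightarrow 0$$
by Lemma \ref{lem5}; by the uniform boundedness of $(g_n)$ this extends to all of $\overline{(1-u)B(H)}$, so $(g_n)$ is the desired bounded approximate identity. With this replacement your argument for the corollary is complete.
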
 
	\begin{proof}
	    As powerboundedness implies Ces\`{a}ro boundedness, this corollary is an immediate consequence of Proposition \ref{pro4 }.
	\end{proof}
	
	\begin{definition}
		Let $ H $ be a discrete ultraspherical hypergroup. For any subset $ \dot E $ of $ C^*_\lambda (H) $, we write
		$$C^*_\lambda (E)= \overline{ \text {span}\{\lambda(\dot x) : \dot x \in \dot E\}}^{\Vert . \Vert _{C^*_\lambda (H) }}$$
	\end{definition}
	The following lemma generalises Lemma 4.3 of \cite{klu14} to the context of ultraspherical hypergroups.
	As the proof of this is same as the proof of \cite[Lemma 4.3]{klu14} we omit the proof.
	\begin{lemma}
		Let $ H $ be a discrete ultraspherical hypergroup. 
		If $ E \in \sR(H) $, then $ \1_{\dot E } \cdot C^*_\lambda (H) = C^*_\lambda (E) $.
	\end{lemma}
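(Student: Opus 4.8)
The plan is to realise the module operator $\1_{\dot E}\cdot(\,\cdot\,)$ as a bounded idempotent on $C^*_\lambda(H)$ whose range is exactly $C^*_\lambda(E)$, and to pin this down by computing the action on the generators $\lambda(\dot x)$. First I would record that, since $H$ is discrete, every subset is clopen and $\sR(H)=\sR_c(H)$; hence $\dot E\in\sR_c(H)$, and Lemma \ref{lem7} shows that $\1_{\dot E}$ is an idempotent of $B(H)$. This is the only place the hypothesis $E\in\sR(H)$ is used: it guarantees that $\1_{\dot E}$ is a genuine element of $B(H)$, so that the module action of Lemma \ref{lem2} applies. Because $\varphi\mapsto(\varphi\cdot)$ is an algebra homomorphism from $B(H)$ into the bounded operators on $C^*_\lambda(H)$ and $\1_{\dot E}^2=\1_{\dot E}$, the operator $P:=\1_{\dot E}\cdot(\,\cdot\,)$ satisfies $P^2=P$; being a bounded idempotent, its range $\1_{\dot E}\cdot C^*_\lambda(H)=P\bigl(C^*_\lambda(H)\bigr)$ is closed.

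The crux is the action of $P$ on a generator. By Lemma \ref{lem2} the action on $L^1(H)$ is pointwise multiplication, $\varphi\cdot\lambda(f)=\lambda(\varphi f)$. Evaluating on a point mass at $\dot x$ (which is a nonzero scalar multiple of $\lambda(\dot x)$, the scalar being the Haar weight of $\dot x$) and using $\1_{\dot E}\,\delta_{\dot x}=\1_{\dot E}(\dot x)\,\delta_{\dot x}$ gives
$$\1_{\dot E}\cdot\lambda(\dot x)=\1_{\dot E}(\dot x)\,\lambda(\dot x).$$
Thus $P$ fixes $\lambda(\dot x)$ when $\dot x\in\dot E$ and annihilates it otherwise. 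One can verify the same identity by pairing against an arbitrary $\psi\in B(H)$ through the formula of Lemma \ref{lem2}.

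With this in hand both inclusions are short. For $C^*_\lambda(E)\subseteq\1_{\dot E}\cdot C^*_\lambda(H)$, each $\lambda(\dot x)$ with $\dot x\in\dot E$ equals $P\lambda(\dot x)$ and so lies in the closed range of $P$; taking closed linear spans yields the inclusion, by the very definition of $C^*_\lambda(E)$ as $\overline{\text{span}}\{\lambda(\dot x):\dot x\in\dot E\}$. Conversely, since $H$ is discrete the point masses span a dense subspace of $L^1(H)$, so $\text{span}\{\lambda(\dot x):\dot x\in H\}$ is dense in $\lambda(L^1(H))$ and hence in $C^*_\lambda(H)$; applying the continuous operator $P$ to a finite combination $\sum c_{\dot x}\lambda(\dot x)$ produces $\sum_{\dot x\in\dot E}c_{\dot x}\lambda(\dot x)\in C^*_\lambda(E)$, and since $C^*_\lambda(E)$ is closed, continuity gives $\1_{\dot E}\cdot T\in C^*_\lambda(E)$ for every $T\in C^*_\lambda(H)$. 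Combining the two inclusions proves the lemma.

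I expect the main obstacle to be foundational rather than structural: one must ensure that the $B(H)$-module action of Lemma \ref{lem2}, stated there for $C^*(H)$, is available and bounded on the reduced algebra $C^*_\lambda(H)$—that is, the multiplier estimate $\Vert\varphi\cdot T\Vert_{C^*_\lambda(H)}\le\Vert\varphi\Vert_{B(H)}\Vert T\Vert_{C^*_\lambda(H)}$—and that in the discrete setting $\lambda(L^1(H))$ is genuinely the closed span of the $\lambda(\dot x)$, so that the weights relating point masses to the $\lambda(\dot x)$ are harmless. Once these normalisation and density points are settled, the identity $\1_{\dot E}\cdot\lambda(\dot x)=\1_{\dot E}(\dot x)\lambda(\dot x)$ reduces the whole statement to the two closed-span inclusions above.
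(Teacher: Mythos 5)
Your proof is correct and follows essentially the same route as the argument the paper defers to (the group-case proof of Lemma 4.3 in the cited reference of Kaniuth--Lau--\"Ulger): use the coset-ring hypothesis only to get $\1_{\dot E}\in B(H)$ via the idempotent theorem, realise multiplication by $\1_{\dot E}$ as a bounded idempotent on $C^*_\lambda(H)$ acting on generators by $\1_{\dot E}\cdot\lambda(\dot x)=\1_{\dot E}(\dot x)\lambda(\dot x)$, and conclude by closedness of the range and density of $\mathrm{span}\{\lambda(\dot x)\}$. The foundational points you flag (boundedness of the module action on the reduced algebra and the identification of point masses with the $\lambda(\dot x)$ in the discrete case) are exactly the ones that need checking, and they go through as you expect.
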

	
	\begin{corollary}
		Let $  H$ be a discrete ultraspherical hypergroup, $\dot E\in\sR_c(H)$ and $u \in B_\lambda (H)$.
		Then the following are equivalent.
		\item[(i)] 
		$ u^n \1_{\dot E}\rightarrow 0 $ in weak$ ^* $ topology of $B_\lambda (H)  $.
		\item[(ii)] 
		For each $ T \in C^*_\lambda (\dot E), \lim_{n\rightarrow \infty} \langle u^n , T \rangle = 0$.
	\end{corollary}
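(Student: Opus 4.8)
The plan is to deduce the equivalence directly from the preceding lemma, which identifies $\1_{\dot E}\cdot C^*_\lambda(H)=C^*_\lambda(\dot E)$, together with the module relation of Lemma \ref{lem2}. The crucial observation is that, for every $n\in\mathbb{N}$ and every $T\in C^*_\lambda(H)$, the duality pairing satisfies
$$\langle u^n\1_{\dot E},T\rangle=\langle u^n,\1_{\dot E}\cdot T\rangle,$$
which is precisely the defining relation of the $B(H)$-module action on $C^*_\lambda(H)$ (as in Lemma \ref{lem2}), using the commutativity of the pointwise product in $B(H)$. First I would record that all the pairings are well defined: since $B_\lambda(H)$ is a Banach algebra one has $u^n\in B_\lambda(H)$, and since $\dot E\in\sR_c(H)$ the function $\1_{\dot E}$ is an idempotent of $B(H)$ by Lemma \ref{lem7}; as $B_\lambda(H)$ is a module over $B(H)$, the product $u^n\1_{\dot E}$ again lies in $B_\lambda(H)$, so the weak$^*$ statement in (i) is meaningful.

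For the implication (i)$\Rightarrow$(ii), I would take an arbitrary $S\in C^*_\lambda(\dot E)$ and use the preceding lemma to write $S=\1_{\dot E}\cdot T$ for some $T\in C^*_\lambda(H)$. Then the key identity gives $\langle u^n,S\rangle=\langle u^n\1_{\dot E},T\rangle$, and the right-hand side tends to $0$ by hypothesis (i). Conversely, for (ii)$\Rightarrow$(i), I would fix an arbitrary $T\in C^*_\lambda(H)$; the preceding lemma shows $\1_{\dot E}\cdot T\in C^*_\lambda(\dot E)$, whence $\langle u^n\1_{\dot E},T\rangle=\langle u^n,\1_{\dot E}\cdot T\rangle\to 0$ by (ii). Since $T$ is arbitrary, this says exactly that $u^n\1_{\dot E}\to 0$ in the weak$^*$ topology $\sigma(B_\lambda(H),C^*_\lambda(H))$, which is (i).

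The only step needing genuine attention is the verification of the key pairing identity in the reduced setting, since Lemma \ref{lem2} is phrased for $C^*(H)$ and $B(H)$, whereas here the relevant duality is between $C^*_\lambda(H)$ and $B_\lambda(H)$. I expect this to be routine rather than a true obstacle: the module action is given by the same formula $\langle\varphi\cdot\lambda(f),\psi\rangle=\langle\lambda(f),\varphi\psi\rangle$ on the dense subspace $\lambda(L^1(H))\subseteq C^*_\lambda(H)$ and extends by continuity and boundedness of $\varphi\mapsto\varphi\cdot(\,\cdot\,)$, so the identity transfers verbatim. The substantive input—the surjectivity of $T\mapsto\1_{\dot E}\cdot T$ onto $C^*_\lambda(\dot E)$—is handed to us by the preceding lemma, so no further work is required there, and the argument reduces to the two short implications above.
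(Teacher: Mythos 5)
Your proof is correct and is exactly the argument the paper intends: the corollary is stated without proof as an immediate consequence of the preceding lemma, and your use of the identity $\langle u^n\1_{\dot E},T\rangle=\langle u^n,\1_{\dot E}\cdot T\rangle$ together with the surjectivity of $T\mapsto\1_{\dot E}\cdot T$ onto $C^*_\lambda(\dot E)$ is the intended route. Your remark that the module action of Lemma \ref{lem2} transfers to the pairing between $B_\lambda(H)$ and $C^*_\lambda(H)$ is indeed routine and correctly handled.
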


	\section{Power boundedness of \texorpdfstring{$A(H)$}{A(H)} and \texorpdfstring{$B(H)$}{B(H)}}
	In this section, we study the power boundedness of the Fourier algebra $A(H)$ and Fourier-Stieltjes algebra $B(H).$
	
	\subsection{Power bounded  property of \texorpdfstring{$A(H)$}{A(H)}}
		We shall begin with a simple lemma.
	
	\begin{lemma}\label{func1}
		Let $U$ be an open subset of $H.$ Then there exists $u\in A(H)$ and an open set $V$ of $H$ such that the following holds:
			\item[(i)] $0\leq u\leq 1$.
			\item[(ii)] $\|u\|_{A(H)}=1=u(\dot{e})$.
			\item[(iii)] $\supp(u)\subset U$.
			\item[(iv)] $u(\dot{x})>0$  for all $\dot{x}\in V$.
	\end{lemma}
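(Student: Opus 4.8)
The plan is to produce $u$ as a normalized coefficient function of the left regular representation $\lambda$ associated with the indicator of a small symmetric neighbourhood of $\dot e$, and then to take $V$ to be the open set on which this function is strictly positive. Since (ii) and (iii) together force $\dot e\in\supp(u)\subseteq U$, I will assume throughout that $\dot e\in U$ (this is the only case in which the conclusion can hold).

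First I would fix the neighbourhood. Because $\supp(\delta_{\dot e}\ast\delta_{\dot e})=\{\dot e\}\subseteq U$ and the map $(\dot x,\dot y)\mapsto\supp(\delta_{\dot x}\ast\delta_{\dot y})$ is continuous into $\mathcal{K}(H)$ for the Michael topology (see \cite{jew75,blh95}), I can choose a relatively compact open neighbourhood $W$ of $\dot e$ with $\overline{\bar W\ast\bar W}\subseteq U$; replacing $W$ by $W\cap\check W$ (again open and relatively compact, as the involution is a homeomorphism) I may take $\check W=W$, hence $\check{\bar W}=\bar W$. Since $\bar W$ is compact with nonempty interior, $0<\lambda_H(\bar W)<\infty$, and I set $\varphi=\lambda_H(\bar W)^{-1/2}\,\1_{\bar W}$, so that $\varphi\in L^2(H)$ is nonnegative with $\|\varphi\|_2=1$.

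Next I define $u(\dot x)=\langle\lambda(\dot x)\varphi,\varphi\rangle_{L^2(H)}$. As a coefficient function of $\lambda$ this lies in $A(H)$ with $\|u\|_{A(H)}\le\|\varphi\|_2^2=1$ \cite{mur08}. Expanding via the definition of $\lambda$ gives $u(\dot x)=\int_H\varphi(\check{\dot x}\ast\dot y)\,\overline{\varphi(\dot y)}\,d\dot y$, where $\varphi(\check{\dot x}\ast\dot y)=\int_H\varphi\,d(\delta_{\check{\dot x}}\ast\delta_{\dot y})\ge 0$ because $\varphi\ge 0$ and each $\delta_{\check{\dot x}}\ast\delta_{\dot y}$ is a probability measure; hence $u\ge 0$. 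At the identity, $u(\dot e)=\int_H|\varphi|^2\,d\lambda_H=1$. Combining, $1=u(\dot e)\le\|u\|_\infty\le\|u\|_{A(H)}\le 1$, so $\|u\|_{A(H)}=1$ and $0\le u\le 1$, giving (i) and (ii). For (iii), if $u(\dot x)>0$ then $\supp(\delta_{\check{\dot x}}\ast\delta_{\dot y})\cap\bar W\neq\emptyset$ for some $\dot y\in\bar W$, and the support relations for the convolution of point masses, together with $\check{\bar W}=\bar W$, place $\dot x$ in $\overline{\bar W\ast\bar W}\subseteq U$; thus $\supp(u)\subseteq U$. Finally, since $u\in A(H)\subseteq C_0(H)$ is continuous with $u(\dot e)=1$, the set $V=\{\dot x\in H:u(\dot x)>0\}$ is open, contains $\dot e$, and satisfies $u>0$ on $V$, which is (iv).

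The only step requiring real care is the first one: transporting the elementary group identity $\supp(\1_W\ast\1_W)\subseteq WW^{-1}$ to the hypergroup setting, i.e. using continuity of the convolution in the Michael topology to secure a symmetric $W$ with $\overline{\bar W\ast\bar W}\subseteq U$ and then verifying the matching support bound $\supp(u)\subseteq\overline{\bar W\ast\bar W}$. Everything else (positivity of $u$, the norm identity, and the openness of $V$) is then immediate.
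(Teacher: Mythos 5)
Your proposal is correct and is essentially the paper's own argument: the paper takes $u=\lambda(V_1)^{-1}\,\1_{V_1}\ast\1_{V_1}$ for a symmetric relatively compact neighbourhood $V_1$ of $\dot e$ with $V_1^2\subset U$ and sets $V=\{u>0\}$, which is exactly your normalized coefficient function $\langle\lambda(\cdot)\varphi,\varphi\rangle$ of the left regular representation, just written as a self-convolution. Your additional remarks (the implicit hypothesis $\dot e\in U$, and the care needed with supports of convolutions in the Michael topology) are sensible elaborations of details the paper leaves tacit.
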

	\begin{proof}
		There exists a symmetric, relatively compact neighbourhood $V_1$ of $\dot e$ in $H$ such that $V_1^2\subset U.$ Let $u= \dfrac{1}{\lambda(V_1)} \1_{V_1} \ast \1_{V_1} $ and $V=\{\dot x \in H: ~ u(\dot x)>0\}.$ Then $u$ and $V$ satisfy the requirements of the lemma.
	\end{proof}
	\begin{lemma}\label{func2}
		Let $H$ be a second countable ultraspherical hypergroup and let $K$ a closed subset of $H.$ Then there exists $u\in A(H)$ such that 
			\item[(i)] $0\leq u\leq 1$ and
			\item[(ii)] $K=u^{-1}(0)$.
	\end{lemma}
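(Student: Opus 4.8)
The plan is to realise $u$ as a norm-convergent series of nonnegative functions in $A(H)$ that all vanish on $K$ and whose positivity sets cover the open set $U:=H\setminus K$. Once such a series is in hand, the zero set of $u$ is forced to be exactly $K$.

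First I would manufacture the nonnegative bumps from regularity. Recall that $A(H)$ is semisimple and regular with Gelfand structure space $H$. Hence, for each $\dot x_0\in U$, regularity produces $v_{\dot x_0}\in A(H)$ with $v_{\dot x_0}(\dot x_0)=1$ and $v_{\dot x_0}|_{K}=0$. Since $A(H)$ is closed under complex conjugation (with $\Vert \bar v\Vert_{A(H)}=\Vert v\Vert_{A(H)}$), the function
$$w_{\dot x_0}:=v_{\dot x_0}\,\overline{v_{\dot x_0}}=|v_{\dot x_0}|^2$$
again lies in $A(H)$, is nonnegative, vanishes on $K$, and satisfies $w_{\dot x_0}(\dot x_0)=1>0$. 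As elements of $A(H)$ are continuous, $O_{\dot x_0}:=\{\dot y\in H: w_{\dot x_0}(\dot y)>0\}$ is an open neighbourhood of $\dot x_0$ contained in $U$.

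Next I would pass to a countable family. The sets $\{O_{\dot x_0}:\dot x_0\in U\}$ cover $U$, and since $H$ is second countable, $U$ is Lindel\"of; hence there is a countable subcover, say $U=\bigcup_{m}O_{\dot x_m}$. Writing $w_m:=w_{\dot x_m}$, I would then put
$$u:=\sum_{m=1}^{\infty}c_m\,w_m,\qquad c_m:=\frac{2^{-m}}{1+\Vert w_m\Vert_{A(H)}}.$$
Because $\sum_m c_m\Vert w_m\Vert_{A(H)}\le\sum_m 2^{-m}=1<\infty$, this series converges absolutely in the Banach algebra $A(H)$, so $u\in A(H)$. The verification of the two required properties is then routine: each $w_m\ge 0$ gives $u\ge 0$, while $\Vert\cdot\Vert_\infty\le\Vert\cdot\Vert_{A(H)}$ gives $u(\dot y)\le\sum_m c_m\Vert w_m\Vert_{A(H)}\le 1$, so $0\le u\le 1$; all $w_m$ vanish on $K$ so $u|_{K}=0$; and if $\dot y\in U$ then $\dot y\in O_{\dot x_m}$ for some $m$, whence $u(\dot y)\ge c_m w_m(\dot y)>0$. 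Therefore $u^{-1}(0)=K$.

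The step deserving the most care is the simultaneous control of the two norms: I must ensure that summability in the $A(H)$-norm yields both convergence of the series \emph{and} the uniform bound $u\le 1$, which is precisely what the inequality $\Vert\cdot\Vert_\infty\le\Vert\cdot\Vert_{A(H)}$ delivers. The only other inputs to double-check are that $A(H)$ is conjugation-closed (so that $|v|^2\in A(H)$) and that second countability is genuinely invoked, via the Lindel\"of property, to replace the pointwise cover $\{O_{\dot x_0}\}$ by a countable one.
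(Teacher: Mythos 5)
Your proof is correct, but it takes a genuinely different route from the paper's. The paper manufactures its nonnegative bumps concretely via Lemma \ref{func1}, as normalized autocorrelations $\frac{1}{\lambda(V_1)}\1_{V_1}\ast\1_{V_1}$ supported near $\dot e$, then uses second countability to write $W=H\setminus K$ as a countable union of translates $\dot a_n\ast V_n$ and sets $u=\sum_n 2^{-n}L_{\dot a_n}u_n$, reading off strict positivity on $W$ from Jewett's support lemma for convolutions. You instead obtain the bumps abstractly from regularity of $A(H)$ together with closure of $A(H)$ under complex conjugation (so that $|v|^2=v\bar v$ is a nonnegative element of $A(H)$), and you invoke second countability only through the Lindel\"of property of $U$ to pass to a countable subcover by the positivity sets $O_{\dot x_m}$; your weighting $c_m=2^{-m}/(1+\Vert w_m\Vert_{A(H)})$ correctly secures both norm convergence and the bound $u\le 1$ via $\Vert\cdot\Vert_\infty\le\Vert\cdot\Vert_{A(H)}$. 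Your argument is more portable --- it works in any regular, semisimple, conjugation-closed commutative Banach function algebra whose spectrum is the (Lindel\"of) underlying space and whose norm dominates the sup norm --- and it makes Lemma \ref{func1}, the operators $L_{\dot a}$, and the convolution support lemma unnecessary here. The cost is that you must justify two facts the paper's explicit construction gives for free: that $A(H)$ is conjugation-closed (true, since $A(G)$ is and $\pi$-radiality is preserved under conjugation because $\pi$ is positivity-preserving; compare the use of conjugation in Lemma \ref{PBmod}), and that the Gelfand spectrum of $A(H)$ is $H$ with the Gelfand transform acting as the identity, so that regularity genuinely separates points of $U$ from $K$ by elements of $A(H)$; both are covered by the cited properties of $A(H)$ from \cite{mur08}. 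You flagged exactly these points yourself, so the proof stands.
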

	\begin{proof}
		Let $\mathcal{V}$ be a neighbourhood basis of $ \dot e $ in $H$ such that for each $V\in \mathcal{V},$ there exists $u_V\in A(H)$ with the property that $0\leq u_V\leq 1,$ $\|u_V\|_{A(H)}=1=u(\dot{e}),$ $\supp(u_V)\subset V$ and $u_V(\dot{x})>0$  for all  $ \dot{x}\in V$ (such neighbourhoods exists by the previous lemma). Let $K$ be any closed set in $H$ and $W=H\setminus K.$ Since $H$ is second countable, there are sequences $\{\dot a_n\}_{n\in \mathbb{N}} \subset W$  and $\{V_n\}_{n\in \mathbb{N}} \subset \mathcal{V}$ such that $W=\bigcup_{n \in \mathbb{N}} \dot a_n \ast V_n.$ For each $n\in\mathbb{N},$ let $u_n=u_{V_n}$ and let $ u= \sum_1^{\infty}2^{-n}L_{\dot a_n} u_n $ where 
		$ L_{\dot a}f(\dot x):= f(\check{\dot{a}}\ast  \dot x) $. Then $u\in A(H),$ since $ \Vert L_{\dot a_n} u_n \Vert \leq \Vert u_n\Vert = 1.$ By \cite[Lemma  4.1B]{jew75} it is obvious that $ u(\dot x)>0 $ for all $ \dot x \in W $ and $ u(\dot x)=0 $ for all $ \dot x \in K.$
	\end{proof}
	
	\begin{theorem}\label{CPBP}
		Let $ H $ be an ultraspherical hypergroup. The Banach algebra $A(H)$ has the  power boundedness  property if and only if $H$ is discrete.
	\end{theorem}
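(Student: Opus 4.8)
The plan is to prove the two implications separately, using throughout that $A(H)$ is a semisimple, regular, Tauberian commutative Banach algebra whose Gelfand spectrum is $H$. In particular the spectral radius of any $u\in A(H)$ equals its sup-norm, $r(u)=\Vert u\Vert_\infty$, and $A(H)\subseteq C_0(H)$.

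Suppose first that $H$ is discrete and let $u\in A(H)$ with $r(u)\le 1$, i.e. $\Vert u\Vert_\infty\le 1$. Since $u\in C_0(H)=c_0(H)$, the set $\dot E_u=\{\dot x:\vert u(\dot x)\vert=1\}$ is finite. I would split $u=u_1+u_2$ with $u_1=u\,\1_{\dot E_u}$ and $u_2=u\,\1_{H\setminus \dot E_u}$; both lie in $A(H)$ (finitely supported functions, in particular the point indicators $\1_{\{\dot x\}}$, belong to $A(H)$ when $H$ is discrete) and they have disjoint supports, so $u^n=u_1^n+u_2^n$ for every $n$. For the first summand $\Vert u_1^n\Vert_{A(H)}\le \sum_{\dot x\in \dot E_u}\Vert\1_{\{\dot x\}}\Vert_{A(H)}$, a bound independent of $n$. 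For the second, $\Vert u_2\Vert_\infty<1$ (its modulus is strictly below $1$ everywhere and it vanishes at infinity, so the supremum is attained and is $<1$), whence $r(u_2)=\Vert u_2\Vert_\infty<1$ and $\Vert u_2^n\Vert_{A(H)}\to 0$. Thus $\sup_n\Vert u^n\Vert_{A(H)}<\infty$, so $u$ is power bounded and the power boundedness property holds.

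For the converse I would argue by contraposition: assuming $H$ is not discrete, I will produce $u\in A(H)$ with $r(u)\le 1$ that fails to be power bounded. By Lemma \ref{lem1} there is a compact nowhere dense set $\dot E\subseteq H$ with $\lambda(\dot E)>0$. The target is a function $u\in A(H)$ with $0\le u\le 1$ whose plateau $\dot F_u=\{u=1\}$ equals $\dot E$; then $\dot F_u^\circ=\emptyset$ while $\lambda(\dot F_u)=\lambda(\dot E)>0$. Were such a $u$ power bounded, Corollary \ref{cor1} would give $\theta=\wsn(\frac{1+u}{2})^n=\1_{\dot F_u^\circ}=\1_\emptyset=0$, whereas Lemma \ref{PropTheta}(ii) forces $\lambda(\dot F_u)=0$, a contradiction. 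Hence $u$ is not power bounded although $r(u)=\Vert u\Vert_\infty\le 1$, so $A(H)$ lacks the power boundedness property.

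It remains to construct this $u$, which I expect to be the main obstacle. Working first in the second countable case, Lemma \ref{func2} furnishes $v\in A(H)$ with $0\le v\le 1$ and $v^{-1}(0)=\dot E$, and by regularity together with the Tauberian property of $A(H)$ one may choose $w\in A(H)$ with $0\le w\le 1$, compact support, and $w\equiv 1$ on a neighbourhood of the compact set $\dot E$. Then $u:=w(1-v)=w-vw\in A(H)$ satisfies $0\le u\le 1$, and $u(\dot x)=1$ forces both $w(\dot x)=1$ and $v(\dot x)=0$, i.e. $\dot x\in\dot E$; conversely $u\equiv 1$ on $\dot E$, so $\dot F_u=\dot E$ as desired. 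For a general non-discrete $H$ I would reduce to the second countable case exactly as in the proof of Lemma \ref{lem1}: pick a compact subhypergroup $K$ with $H/\!/K$ second countable, perform the construction in $A(H/\!/K)$, and lift along the canonical quotient map. The delicate points, requiring the most care, are verifying at each step that the constructed function genuinely lies in $A(H)$ and that the lift preserves both the plateau set and the measure-theoretic properties of $\dot E$.
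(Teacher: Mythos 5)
Your treatment of the two main cases is correct but follows a genuinely different route from the paper. For the discrete direction the paper simply cites \cite[Corollary 2.3]{sch70}; your direct argument --- splitting $u$ as $u\1_{\dot E_u}+u\1_{H\setminus \dot E_u}$, bounding the powers of the finitely supported part uniformly and killing the part with $\Vert\cdot\Vert_\infty<1$ via the spectral radius formula --- is a sound, self-contained substitute (modulo the standard fact that $\1_{\{\dot x\}}\in A(H)$ for discrete $H$). For the converse in the second countable case the paper invokes \cite[Proposition 1]{gra76} after Lemmas \ref{lem1} and \ref{func2}, whereas you build $u=w(1-v)\in A(H)$ with $\dot F_u=\dot E$ compact, nowhere dense and of positive measure, and then derive the contradiction from the paper's own Section 3 machinery: Corollary \ref{cor1} gives $\wsn\,(\frac{1+u}{2})^n=\1_{\dot F_u^\circ}=0$, while Lemma \ref{PropTheta}(ii) then forces $\lambda(\dot F_u)=0$. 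That is valid and has the merit of keeping the punchline inside the paper instead of outsourcing it to \cite{gra76}; the only small care needed is producing $w$ with $0\le w\le 1$ (a Lemma \ref{func1}-type convolution rather than abstract regularity), since without the upper bound $w\le 1$ the plateau $\dot F_u$ could leak outside $\dot E$.

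The genuine gap is in your reduction of the general non-second-countable case, which is where most of the work in the paper's proof actually sits. The plan ``pick a compact subhypergroup $K$ with $H//K$ second countable, perform the construction in $A(H//K)$, and lift'' fails as stated because nothing guarantees that $H//K$ is non-discrete: if $K$ is open then $H//K$ is discrete, it carries no compact nowhere dense set of positive Haar measure, and there is nothing to lift. This is exactly the point the paper's argument is organized around: it first passes to an open $\sigma$-compact subhypergroup $H^\prime$ (transferring the power boundedness property via \cite[Lemma 5.1]{kum14}), then uses \cite[Theorem 1.1]{kum17} to choose $H^{\prime\prime}\subseteq\bigcap_n U_n$ with $\lambda(U_n)\rightarrow 0$, so that discreteness of $H^\prime//H^{\prime\prime}$ would make $H^{\prime\prime}$ open and hence of positive measure --- a contradiction. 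In your contrapositive formulation the same device is still required (choose $K$ of Haar measure zero, hence not open, hence $H//K$ non-discrete) before the second countable construction can be carried out and lifted. The delicate points you flag --- membership of the lift in $A(H)$ and preservation of the plateau set and its measure --- are comparatively routine; the non-discreteness of the quotient is the step your sketch misses.
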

	\begin{proof}
		If  $H$ is discrete, then the power boundedness of $A(H)$ is a consequence of \cite[Corollary 2.3] {sch70}. Thus, we are left with the proof of the forward part. Towards a contradiction, assume that $ H $ is nondiscrete. 
		
		Suppose that $H$ is second countable. In view of Lemma \ref{lem1} and Lemma \ref{func2} the proof now follows exactly as in \cite[Proposition 1]{gra76} .

		Now let $H$ be an ultraspherical hypergroup which is not second countable. Let $U$ be a symmetric relatively compact neighbourhood of $\dot{e}$ and let $H^\prime=\underset{n\in\mathbb{N}} \bigcup{U^n},$ where $U^n=U*U*\ldots *U\ (n$-times$).$ It is clear that $H^\prime$ is an open subhypergroup of $H$ and hence closed. Since $A(H)$ has the power bounded property, it follows from \cite[Lemma 5.1]{kum14} that $A(H^\prime)$ also has the power bounded property. Since $H$ is non-discrete, so is $H^\prime.$ Choose a sequence $\{U_n\}_{n \in \mathbb{N}}$ of relatively compact neighbourhoods of $\dot{e}$ such that $\lambda(U_n)\rightarrow 0.$ By \cite[Theorem 1.1]{kum17}, there exists a compact subhypergroup $H^{\prime\prime}\subseteq\underset{n\in\mathbb{N}}{\cap}U_n$ such that $H^\prime//H^{\prime\prime}$ is a second countable ultraspherical hypergroup. Since $H^\prime//H^{\prime\prime}$ is also an ultraspherical hypergroup based on the subgroup $p^{-1}(H^\prime)$ and the spherical projector $\pi|_{C_c(p^{-1}(H^\prime))},$ it follows that $A(H^\prime//H^{\prime\prime})$ can be identified isometrically inside $A(H^\prime).$ As $A(H^\prime)$ has the power bounded property, it follows that $A(H^\prime//H^{\prime\prime})$ also has the power bounded property. Therefore, by the preceding paragraph it follows that $H^\prime//H^{\prime\prime}$ is discrete, which forces us to conclude that $H^{\prime\prime}$ is open and hence $\lambda(H^{\prime\prime})>0,$ a contradiction. 
	\end{proof}
	
	\subsection{Power bounded property for \texorpdfstring{$B(H)$}{B(H)}}
	Our  main aim of this part is to show that $B(H)$ is power bounded if and only if $H$ is finite. Our idea is to make use of weakly almost periodic functions on $H$ and weakly almost periodic functionals on $L^1(H).$ We shall begin this section by defining the same.
	\begin{definition}\mbox{ }
		\begin{enumerate}[(i)]
			\item Let $H$ be an arbitrary locally compact hypergroup. A function $f \in C_b(H)$ is called weakly almost periodic if the left orbit $O_L(f) = \{\ell_xf: x \in H\},$ where 
			$\ell_xf(t)= f(x \ast t)~ (t \in H),$ is relatively weakly compact in $C_b(H)$. We denote the set of all weakly almost periodic functions on $H$ by $wap(H).$
			\item Let $H$ be a locally compact hypergroup with a Haar measure  $\lambda$. Let $f \in L^\infty (H)$ and
			$\mathcal{O}_L (f) = \{a \cdot f: a \in L^1(H), \Vert   a \Vert \leq 1  \},$ where $a \cdot f=\frac{1}{\Delta} \check a \ast f~ ( a \in L^1(H))$. The functional $f$ is called weakly almost periodic on $L^1(H)$ if $\mathcal{O}_L (f)$ is $\sigma(L^\infty (H) ,L^\infty (H)^* )$-relatively compact. We denote the set of all weakly almost periodic functionals on $L^1(H)$ by $WAP(L^1(H))$.
		\end{enumerate}
	\end{definition}
		
	\begin{lemma}
		Let $H$ be a locally compact hypergroup with a Haar measure  $\lambda$. Let $f \in L^\infty (H)$ which is in $WAP(L^1(H))$. Then $f$ is $\lambda$-a.e continuous on $H$.
	\end{lemma}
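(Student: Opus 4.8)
The plan is to show that $f$ coincides $\lambda$-almost everywhere with a genuinely continuous function; almost everywhere continuity is then immediate. The whole argument rests on combining three ingredients: that $a\cdot f$ is already continuous for every $a\in L^1(H)$, that a suitable approximate identity forces $e_\alpha\cdot f\to f$ in the weak$^*$ topology, and that the weak almost periodicity hypothesis upgrades this to genuine weak convergence, after which Mazur's theorem produces a norm (that is, essential-supremum) approximation of $f$ by continuous functions.

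First I would record the standard fact that for $a\in L^1(H)$ and $g\in L^\infty(H)$ the convolution $\check a\ast g$ lies in $C_b(H)$, so that every element $a\cdot f=\frac{1}{\Delta}\check a\ast f$ of the orbit $\mathcal{O}_L(f)$ is bounded and continuous. Next, fix a bounded approximate identity $(e_\alpha)$ for $L^1(H)$ with $e_\alpha\in C_c(H)$, $e_\alpha\ge 0$ and $\|e_\alpha\|_1=1$, so that each $e_\alpha\cdot f$ is continuous and belongs to $\mathcal{O}_L(f)$. Using the duality identity that defines the module action (up to the standard modular and involution adjustments, $\langle e_\alpha\cdot f,h\rangle=\langle f,h\ast e_\alpha\rangle$) together with the fact that $h\ast e_\alpha\to h$ in $L^1$-norm, one checks that $\langle e_\alpha\cdot f,h\rangle\to\langle f,h\rangle$ for every $h\in L^1(H)$; that is, $e_\alpha\cdot f\to f$ in the weak$^*$ topology $\sigma(L^\infty(H),L^1(H))$.

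The decisive step is to promote this weak$^*$ convergence to weak convergence, and this is exactly where weak almost periodicity enters. Since $\mathcal{O}_L(f)$ is relatively weakly compact, the net $(e_\alpha\cdot f)$ lies in a weakly compact set. Any weakly convergent subnet converges weakly, hence also weak$^*$, to its weak limit; as the full net already converges weak$^*$ to $f$, that limit must equal $f$. Because a net in a weakly compact set all of whose weakly convergent subnets share the limit $f$ must itself converge weakly to $f$, we conclude that $e_\alpha\cdot f\to f$ in the weak topology $\sigma(L^\infty(H),L^\infty(H)^*)$. Now Mazur's theorem gives $f$ in the norm closure of the convex hull of the continuous functions $e_\alpha\cdot f$, and since this norm closure is metric, a \emph{sequence} $(w_n)$ of convex combinations with $\|w_n-f\|_\infty\to 0$, each $w_n$ being continuous.

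Finally I would convert essential-supremum convergence into honest uniform convergence. Choosing a $\lambda$-null set $N$ off which $w_n\to f$ uniformly, and noting that $H\setminus N$ is dense since nonempty open sets have positive Haar measure and hence $N$ has empty interior, the continuity of each difference $w_n-w_m$ gives $\sup_H|w_n-w_m|=\sup_{H\setminus N}|w_n-w_m|$. Thus $(w_n)$ is uniformly Cauchy on all of $H$ and converges uniformly to some $\tilde f\in C_b(H)$ with $\tilde f=f$ $\lambda$-almost everywhere; in particular $f$ is $\lambda$-a.e. continuous. I expect the main obstacle to be the weak$^*$-to-weak upgrade of the third paragraph: it is the only place the weak almost periodicity hypothesis is genuinely used, and the net/subnet compactness argument (valid even when $H$ is not metrizable) is where the care is required.
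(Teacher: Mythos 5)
Your proposal is correct and follows essentially the same route as the paper: approximate the function by the continuous functions $e_\alpha\cdot f$, use weak almost periodicity to upgrade the weak$^*$ convergence $e_\alpha\cdot f\to f$ to weak convergence, and then pass from the weak closure of $C_b(H)$ to its norm closure in $L^\infty(H)$ via Mazur's theorem. The paper compresses these steps into the single line $f\in\overline{C(H)}^{weak}=C(H)$; your version merely makes the weak$^*$-to-weak upgrade and the identification of the essential-sup closure of $C_b(H)$ explicit.
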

	
	\begin{proof}
		Let $f \in WAP(L^1(H))$ and $(e_\alpha)_\alpha$ be an approximate identity for $L^1(H)$ with $\Vert e_\alpha \Vert \leq 1$ for all $\alpha$. As $f\in WAP(L^1(H))$ and also as the net $(e_\alpha)$ converges to a right identity of $L^\infty (H)^*,$ after passing to a subnet if necessary, we can assume that $\wa e_\alpha \cdot f = f.$ Now, since $e_\alpha \cdot f \in C_b(H)$ for all $\alpha$, we conclude that $f \in \overline{C(H)}^{weak} = C(H).$
	\end{proof}

	\begin{lemma}\label{lem15}
		Let $H$ be a locally compact hypergroup with a Haar measure  $\lambda$ and let $f \in C_b(H)$. Then $f \in wap(H)$ if and only if the set $\mathbb O_L(f) =\{f \cdot \mu : \mu \in M(H), \Vert \mu\Vert \leq 1 \}$ is relatively weakly compact in $C_b(H)$.
	\end{lemma}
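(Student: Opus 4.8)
The statement is the standard characterization of weak almost periodicity through the measure orbit, and the plan is to prove the two implications separately, the forward one being immediate and the reverse one resting on Krein's theorem together with a weak-integral representation of $f\cdot\mu$. Throughout I read the measure action in the way that makes the two orbits comparable, namely $f\cdot\mu=\int_H \ell_x f\,d\mu(x)$, so that in particular $f\cdot\delta_x=\ell_x f$ for the point mass $\delta_x\in M(H)$, which has norm one.

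The easy direction records the inclusion $O_L(f)\subseteq\mathbb O_L(f)$: since $f\cdot\delta_x=\ell_x f$ and $\|\delta_x\|=1$, every translate $\ell_x f$ belongs to $\mathbb O_L(f)$. Hence if $\mathbb O_L(f)$ is relatively weakly compact in $C_b(H)$, then so is its subset $O_L(f)$, which is exactly the assertion $f\in wap(H)$. This disposes of the ``if'' part.

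For the converse, assume $f\in wap(H)$, so that $K:=\overline{O_L(f)}^{\,w}$ is weakly compact in $C_b(H)$. First I pass to the balanced hull: the map $\overline{\mathbb D}\times K\to C_b(H)$, $(\alpha,g)\mapsto\alpha g$, is continuous from the product of the compact disc and $(K,w)$ into $(C_b(H),w)$, because for each $\Phi\in C_b(H)^*$ the scalar $\alpha\langle g,\Phi\rangle$ depends jointly continuously on $(\alpha,g)$. Consequently $L:=\overline{\{\alpha\,\ell_x f:\ |\alpha|\le1,\ x\in H\}}^{\,w}$ is weakly compact and balanced, and $0\in L$. By Krein's theorem the closed convex hull $C:=\overline{\mathrm{co}}(L)$ is again weakly compact; being the closed convex hull of a balanced set, $C$ is absolutely convex and weakly closed. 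It then remains to prove $\mathbb O_L(f)\subseteq C$. Fix $\mu\in M(H)$ with $\|\mu\|\le1$ and write $d\mu=h\,d|\mu|$ with $|h|=1$ pointwise, so that $f\cdot\mu=\int_H h(x)\,\ell_x f\,d|\mu|(x)$, an integral whose integrand takes values in $L$. Reading this as a weak (Pettis) integral against the positive measure $|\mu|$ of total mass at most $1$, the classical vector-integration theorem yields $f\cdot\mu\in|\mu|(H)\,\overline{\mathrm{co}}^{\,w}(L)\subseteq C$, since $0\in L$ and $|\mu|(H)\le1$. Equivalently, for every $\Phi\in C_b(H)^*$ one has $\langle f\cdot\mu,\Phi\rangle=\int_H h(x)\langle\ell_x f,\Phi\rangle\,d|\mu|(x)$, whence $|\langle f\cdot\mu,\Phi\rangle|\le\sup_{g\in L}|\langle g,\Phi\rangle|\le\sup_{g\in C}|\langle g,\Phi\rangle|$; as $C$ is absolutely convex and weakly closed, the bipolar theorem forces $f\cdot\mu\in C$. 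Thus $\mathbb O_L(f)\subseteq C$ with $C$ weakly compact, so $\mathbb O_L(f)$ is relatively weakly compact.

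The main obstacle is the legitimacy of the interchange $\langle f\cdot\mu,\Phi\rangle=\int_H\langle\ell_x f,\Phi\rangle\,d\mu(x)$ for an \emph{arbitrary} functional $\Phi\in C_b(H)^*$, not merely for those $\Phi$ arising from a measure on $H$ (where it is a routine application of Fubini's theorem): this is precisely the assertion that $x\mapsto\ell_x f$ is Pettis integrable with respect to $\mu$. I would secure it by verifying the weak measurability of $x\mapsto\langle\ell_x f,\Phi\rangle$ together with essential separable-valuedness of the orbit map, and then invoking the standard criterion guaranteeing Pettis integrability of a bounded, weakly measurable map whose range is relatively weakly compact. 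These measurability technicalities, and the passage through Krein's theorem, are handled exactly as in the locally compact group case.
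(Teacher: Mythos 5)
Your ``if'' direction matches the paper's. For the converse you take a genuinely different route, and the step you yourself flag as ``the main obstacle'' --- the validity of $\langle f\cdot\mu,\Phi\rangle=\int_H\langle\ell_xf,\Phi\rangle\,d\mu(x)$ for \emph{every} $\Phi\in C_b(H)^*$, i.e.\ Gelfand--Pettis integrability of the orbit map --- is in fact the entire content of the hard implication and is left as a promise rather than proved. Moreover, one of the two ways you propose to secure it will not work: the orbit of a weakly almost periodic function need not be essentially norm-separable, so you cannot expect the Pettis measurability theorem to apply (and if it did you would get Bochner integrability, far more than needed). The route that does work, and that you should make explicit, is this: by Grothendieck's double-limit criterion the weak topology of $C_b(H)$ and the topology of pointwise convergence on $H$ coincide on the weakly compact set $\overline{O_L(f)}^{\,w}$, the map $x\mapsto\ell_xf$ is pointwise continuous by the joint continuity of convolution, hence weakly continuous, and a weakly continuous map into a weakly compact set is Gelfand integrable against any finite Radon measure with integral in $\|\mu\|\cdot\overline{\mathrm{co}}^{\,w}(L)$, which is weakly compact by Krein's theorem. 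With that inserted your argument closes; without it the bipolar estimate has no starting point.

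The paper sidesteps vector integration entirely. For $\mu$ of norm at most one with compact support $C$ it invokes Bourbaki to write $\mu$ as a weak$^*$ limit of a net $\mu_\alpha$ of convex combinations of point masses $\delta_x$, $x\in C$; each $f\cdot\mu_\alpha$ lies in $\overline{\mathrm{co}}(O_L(f))$, which is weakly compact by Krein--\v{S}mulian, so a subnet converges weakly to some $h$, and $h=f\cdot\mu$ is verified by testing only against the evaluations $\delta_t$ --- functionals for which the interchange of limit and pairing is immediate. General $\mu$ then follows by norm approximation with compactly supported measures. That argument buys complete independence from Pettis integration theory at the cost of the weak$^*$ approximation step; yours, once the weak continuity of the orbit map is supplied, is more transparent about non-positive measures (your factorization $d\mu=h\,d|\mu|$ and passage to the balanced hull addresses a point the paper glosses over, since convex combinations of point masses only produce probability measures).
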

	\begin{proof}
		Assume that $\mathbb O_L(f)$ is relatively weakly compact.  Since $\mathcal O_L(f) \subset \mathbb O_L(f)$, we have $f \in wap(H)$. For the converse, let $\mathcal O_L(f)$ be  relatively weakly compact in $C_b(H)$. Then, by Krein-\v{S}mulian theorem \cite[Theorem 2.8.14]{meg12},  the closure of $co (O_L(f))$ in $C_b(H)$ is weakly compact. Let $\mu \in M(H)$ with $\Vert \mu \Vert \leq 1$ such that $C=supp(\mu)$ is compact. By \cite[p.71, Corollary 2]{bou65}, there exists a net $(\mu_\alpha)_\alpha$ in $co\{\delta _x : x \in C\}$ such that 
		$$\mu = \wsa\ \mu_\alpha. $$
		Now, since the net
		$(f \cdot \mu_\alpha )_\alpha$ is contained in $\overline{co}(O_L(f))$ and as $\overline{co}(O_L(f))$ is weakly compact, by passing to a subnet if necessary, we can assume that 
		$f \cdot \mu_\alpha   \rightarrow h$ for some $h \in C_B(H)$ in the weak topology. Thus for each $t \in H$, we have
		\begin{align*}
			h(t) = \langle \delta_t , h \rangle 
			&= \lim_\alpha 
			\langle  \delta_t ,  f \cdot  \mu_\alpha   \rangle\\
			&= \lim_\alpha 
			\langle  \mu_\alpha \ast \delta_t ,  f    \rangle\\
			&= 
			\langle  \mu \ast \delta_t ,  f    \rangle\\
			&= 
			\langle  \delta_t ,  f \cdot \mu   \rangle.
		\end{align*}
		Hence, $f \cdot \mu = h \in \overline{co}(O_L(f))$. Now, let $\mu$ be an arbitrary measure in the unit ball of $M(H) $. Since the set of compactly supported measures is norm-dense in $M(H)$, there is a sequence  $\{\mu_n\}_{n\in \mathbb{N}}$ of compactly supported measures in $M(H)$ such that 
		$\lim_n\Vert \mu_n - \mu \Vert =0$. Thus
		$$\Vert f \cdot \mu_n -f\cdot  \mu \Vert_\infty \leq \Vert f\Vert_\infty \Vert \mu_n - \mu \Vert \rightarrow 0, $$
		Hence, $f \cdot \mu \in  \overline{co}(O_L(f))$ and therefore,
		$\mathbb O_L(f) \subseteq \overline{co}(O_L(f)),$ which implies that it is relatively weakly compact.
	\end{proof}

	\begin{proposition}
		Let $H$ be a locally compact hypergroup with a Haar measure  $\lambda$. Then $WAP(L^1(H))= wap(H)$. 
	\end{proposition}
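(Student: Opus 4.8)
The plan is to prove the two inclusions $\mathrm{wap}(H) \subseteq WAP(L^1(H))$ and $WAP(L^1(H)) \subseteq \mathrm{wap}(H)$ separately, leveraging the identification of the two orbit sets already set up in the preliminary lemmas. The key bridge is Lemma \ref{lem15}, which characterizes membership in $\mathrm{wap}(H)$ via relative weak compactness of the larger orbit $\mathbb{O}_L(f) = \{f\cdot\mu : \mu\in M(H), \|\mu\|\le 1\}$ rather than the smaller orbit $\mathcal{O}_L(f)$ built from point masses $\ell_x f$. Since $L^1(H)$ embeds isometrically into $M(H)$ via $a\mapsto a\,\lambda$, the functional-analytic orbit $\mathcal{O}_L(f) = \{a\cdot f : a\in L^1(H), \|a\|\le 1\}$ appearing in the definition of $WAP(L^1(H))$ sits inside $\mathbb{O}_L(f)$, which is the natural point of contact between the two notions.

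\textbf{Forward inclusion.} First I would take $f\in \mathrm{wap}(H)$ and show $f\in WAP(L^1(H))$. By Lemma \ref{lem15}, the set $\mathbb{O}_L(f)$ is relatively weakly compact in $C_b(H)$. Since every $a\in L^1(H)$ with $\|a\|\le 1$ gives rise to the measure $a\,\lambda\in M(H)$ of norm $\le 1$, and since $a\cdot f = \tfrac{1}{\Delta}\check a\ast f$ coincides with $f\cdot(a\lambda)$ up to the modular normalization, the orbit $\mathcal{O}_L(f)$ is contained in $\mathbb{O}_L(f)$. The one subtlety here is matching the two module actions: one must verify that the action $a\cdot f$ used in the definition of $WAP(L^1(H))$ agrees with the measure action $f\cdot\mu$ of Lemma \ref{lem15} when $\mu = a\lambda$, which is a direct computation with the involution and the modular function. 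Relative weak compactness is inherited by subsets, and the relevant weak topology $\sigma(L^\infty(H), L^\infty(H)^*)$ restricts correctly on $C_b(H)$, so $\mathcal{O}_L(f)$ is relatively weakly compact and hence $f\in WAP(L^1(H))$.

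\textbf{Reverse inclusion.} Conversely, suppose $f\in WAP(L^1(H))$. The preceding lemma already shows that such an $f$ is $\lambda$-a.e.\ continuous, but to place $f$ in $\mathrm{wap}(H)\subseteq C_b(H)$ I would first argue that $f$ agrees a.e.\ with a genuine continuous bounded function: using an approximate identity $(e_\alpha)$ for $L^1(H)$ with $\|e_\alpha\|\le 1$, weak almost periodicity lets one pass to a subnet with $e_\alpha\cdot f \to f$ weakly, and each $e_\alpha\cdot f\in C_b(H)$, so $f\in C_b(H)$ after modification on a null set. Then, since $\mathcal{O}_L(f)$ being relatively weakly compact in $L^\infty(H)$ transfers to relative weak compactness in $C_b(H)$ (the translates $\ell_x f$ are recovered as weak$^*$ limits of $e_\alpha$-type averages concentrating at $x$, exactly as in the last computation of Lemma \ref{lem15}), one obtains $f\in\mathrm{wap}(H)$ directly from the definition.

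\textbf{The main obstacle} I anticipate is the reverse direction: reconciling the two ambient topologies. In $WAP(L^1(H))$ the compactness lives in $\sigma(L^\infty(H),L^\infty(H)^*)$ on a space of (equivalence classes of) $L^\infty$ functions, whereas $\mathrm{wap}(H)$ demands compactness in the weak topology of the concrete function space $C_b(H)$. The crux is showing that once $f$ is identified with a continuous representative, the two orbits and their closures match up and the two weak topologies agree on the relevant orbit closure — this is precisely where the $\lambda$-a.e.\ continuity lemma and the density of $\lambda(C_c(H))$ (or of compactly supported measures, as in Lemma \ref{lem15}) do the real work, allowing the point-evaluation argument $h(t)=\langle\delta_t, f\cdot\mu\rangle$ to be run in reverse to recover the genuine translates $\ell_t f$.
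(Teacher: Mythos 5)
Your proposal is correct and follows essentially the same route as the paper: both inclusions rest on Lemma \ref{lem15} together with the approximate-identity argument that realizes the measure action $f\cdot\mu$ (or the translates $\ell_x f$) as weak limits of elements of $\mathcal{O}_L(f)$, with the a.e.-continuity lemma supplying the continuous representative. The only cosmetic difference is that the paper establishes $\mathbb{O}_L(f)\subseteq\overline{\mathcal{O}_L(f)}$ for arbitrary measures in the unit ball and then invokes Lemma \ref{lem15}, whereas you recover the point translates directly; the underlying mechanism is identical.
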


	\begin{proof}
		Assume that $f \in WAP(L^1(H))$ and $(e_\alpha)_\alpha$ be an approximate identity of $L^1(H)$ of bound 1. Let $\mu $ be an arbitrary measure in the unit ball of $ M(H)$. Then, since $f \in WAP(L^1(H))$, After passing to a subnet if necessary, we can assume that $f \cdot \mu \ast e_\alpha \rightarrow g$ in the $\sigma(L^\infty (H) , L^\infty (H)^*)$-topology for some $g \in C_b(H)$. Now, since  $\sigma(L^\infty (H) , L^\infty (H)^*)$-topology on $C_b(H)$ is stronger than $\sigma(C_b(H), M(H))$-topology, we have
		$$\lim_\alpha \langle f \cdot \mu \ast e_\alpha , \nu\rangle
		=\lim_\alpha \langle f , \mu \ast e_\alpha \ast \nu\rangle
		= \langle f \cdot \mu , \nu\rangle $$
		for all $\nu \in M(H)$, so $g = f \cdot \mu$. Thus $f \cdot \mu \ast e_\alpha \rightarrow f \cdot \mu$ in the $\sigma(L^\infty (H) , L^\infty (H)^*)$-topology. Hence, $f \cdot \mu \in \mathcal{O}_L(f)$ and consequently $\mathbb{O}_L(f)\subseteq\mathcal{O}_L(f)$ which implies that $\mathbb{O}_L(f)$ is relatively weakly compact in $C_b(H)$. Therefore, by Lemma \ref{lem15}, $f \in wap(H).$ Thus, $WAP(L^1(H))\subseteq wap(H).$ Now the other way inclusion is a consequence of Lemma \ref{lem15}. 
	\end{proof}
	As an immediate consequence of \cite[Proposition 4.2.8]{ska89}   we obtain the following lemma.
	\begin{lemma}
		Let $H$ be a locally compact hypergroup with a Haar measure. Then $B(H) \subset wap(H)$.
	\end{lemma}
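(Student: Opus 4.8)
My plan is to realise this inclusion through the coefficient‑function description of the Fourier--Stieltjes algebra together with the weak compactness of bounded sets in a Hilbert space; the cited \cite[Proposition 4.2.8]{ska89} supplies the representation‑theoretic input, so that the verification reduces to a short translation computation. First I would record two standing facts: $wap(H)$ is a norm‑closed linear subspace of $C_b(H)$, and every $u\in B(H)$ is a coefficient function of a (contractive, weakly continuous) representation $(\pi,\mathcal H)$ of $H$, say $u(x)=\langle\pi(x)\xi,\eta\rangle$ for suitable $\xi,\eta\in\mathcal H$. This representation is exactly what \cite[Proposition 4.2.8]{ska89} provides (or what its proof yields via the GNS construction applied to the positive‑definite generators of $B(H)$); since $wap(H)$ is a closed subspace, it would in fact suffice to treat the positive‑definite generators, but the argument is no harder for a general coefficient.

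Next I would translate. Using $\ell_x f(t)=f(x\ast t)$ and the defining relation $\pi(x)\pi(t)=\int_H\pi(s)\,d(\delta_x\ast\delta_t)(s)$ for a representation of the hypergroup $H$, one computes
$$\ell_x u(t)=u(x\ast t)=\langle\pi(x)\pi(t)\xi,\eta\rangle=\langle\pi(t)\xi,\pi(\check x)\eta\rangle\qquad(t\in H).$$
Thus every left translate of $u$ is again a coefficient function in which the first vector $\xi$ is fixed and the second vector is replaced by $\pi(\check x)\eta$. Because $\pi$ is contractive, the set $K=\{\pi(\check x)\eta:x\in H\}$ lies in the closed ball of radius $\|\eta\|$ of $\mathcal H$; hence $K$ is bounded and therefore relatively weakly compact.

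Finally I would transport this compactness into $C_b(H)$. The coefficient map $\Phi\colon\mathcal H\to C_b(H)$, $\zeta\mapsto(t\mapsto\langle\pi(t)\xi,\zeta\rangle)$, is bounded and linear (indeed $\|\Phi(\zeta)\|_\infty\le\|\xi\|\,\|\zeta\|$), hence weak‑to‑weak continuous, so it carries the relatively weakly compact set $K$ onto a relatively weakly compact subset of $C_b(H)$. Since $\Phi(K)=O_L(u)$, the orbit $O_L(u)$ is relatively weakly compact, that is, $u\in wap(H)$, which is the assertion.

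I expect the only genuinely delicate step to be the first one: making precise the coefficient‑function representation of $B(H)$ and the operator identity $\pi(x)\pi(t)=\int_H\pi\,d(\delta_x\ast\delta_t)$ in the hypergroup setting, where the product of two translation operators is an \emph{average} of $\pi$ against the compactly supported probability measure $\delta_x\ast\delta_t$ rather than a single operator as in the group case. Once the cited proposition secures this input, the weak‑compactness transport in the last two steps is entirely routine and the inclusion $B(H)\subset wap(H)$ follows immediately.
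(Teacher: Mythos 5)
Your proof is correct and is essentially the standard argument underlying the paper's one-line citation of \cite[Proposition 4.2.8]{ska89}: represent $u$ as a coefficient $\langle\pi(\cdot)\xi,\eta\rangle$, observe that left translates only move $\eta$ inside a bounded (hence relatively weakly compact) subset of the Hilbert space, and push that compactness forward through the bounded coefficient map into $C_b(H)$. The only cosmetic point is that the map $\zeta\mapsto\langle\pi(\cdot)\xi,\zeta\rangle$ is conjugate-linear rather than linear, but it is still real-linear and bounded, hence weak-to-weak continuous, so the compactness transfer goes through unchanged.
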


Here is the main result of this section. It has been known since the work of Arens \cite{are51} that there are two Banach algebra products on the bidual $L^1(H)^{**}$, each extending the convolution  on $L^1(H)$, and $L^1(H)$ is called {\it Arens regular} if those products coincide.  According to \cite[Theorem 3.14]{dal05}, the Banach algebra $L^1(H) $ is Arens regular if and only if $WAP(L^1(H))=L^\infty (H) $.
	
	\begin{theorem}\label{CPBPFS}
		Let $ H $ be an ultraspherical hypergroup. Then the following statements are equivalent:
            
                \item[(i)] Every subset of $ H $ belongs to $ \sR(H) $.
			    \item[(ii)] $L^1(H)$ is Arens regular.
			    \item[(iii)] $H$ is finite.
			    \item[(iv)] The Fourier-Stieltjes algebra $ B(H) $ has power boundedness property.
            
	\end{theorem}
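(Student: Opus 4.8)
The plan is to prove that (iii) is equivalent to each of the other three conditions, and I would dispose first of the three soft implications issuing from finiteness. If $H$ is finite then it is simultaneously discrete and compact, so $L^1(H)$ and $B(H)$ are finite-dimensional; since $A(H)$ is regular and Tauberian it is uniformly dense in $C_0(H)=\mathbb{C}^H$, forcing $A(H)=B(H)=\mathbb{C}^H$ with Gelfand spectrum $H$ and $r(u)=\|u\|_\infty$ for every $u$. Arens regularity of the finite-dimensional algebra $L^1(H)$ is automatic, which is (ii). If $r(u)\le 1$ then $\|u^n\|_\infty=\|u\|_\infty^{\,n}\le 1$, and equivalence of norms in finite dimension gives $\sup_n\|u^n\|_{B(H)}<\infty$, which is (iv). Finally every $\dot E\subseteq H$ is clopen, so $\1_{\dot E}$ is an idempotent of $B(H)$ and Lemma \ref{lem7} places $\dot E$ in $\sR_c(H)\subseteq\sR(H)$, which is (i).

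For $(ii)\Rightarrow(iii)$ I would use the characterization recalled before the theorem, namely that $L^1(H)$ is Arens regular exactly when $WAP(L^1(H))=L^\infty(H)$, together with the identity $WAP(L^1(H))=wap(H)$ and the fact that every element of $wap(H)$ is $\lambda$-a.e. continuous. First I rule out non-discreteness: if $H$ were non-discrete, Lemma \ref{lem1} yields a compact nowhere dense set $E$ with $\lambda(E)>0$, and $\1_E\in L^\infty(H)$ is discontinuous at every point of the positive-measure set $E$, so $\1_E\notin wap(H)=L^\infty(H)$, a contradiction. Thus $H$ is discrete and $L^\infty(H)=\ell^\infty(H)=C_b(H)$. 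It then remains to exclude the infinite discrete case, and here I would manufacture a bounded function violating Grothendieck's iterated-limit criterion for weak almost periodicity along a sequence of distinct points of $H$, showing $\ell^\infty(H)\neq wap(H)$ and hence $H$ finite. This combinatorial construction on an infinite discrete hypergroup is the delicate point of this implication.

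For $(i)\Rightarrow(iii)$ I would argue contrapositively. Since $p\colon G\to H$ is surjective, $H$ infinite forces $G$ infinite, and for an infinite locally compact group the coset ring is a proper subfamily of the power set, $\sR(G)\neq 2^G$. Exploiting $\sR(H)=\{\dot E\subseteq H:p^{-1}(\dot E)\in\sR(G)\}$, I would produce a $\pi$-saturated subset of $G$ lying outside $\sR(G)$, for instance the union of a suitably indexed infinite family of distinct orbits $\mathcal{O}_{x_n}$; descending through $p$ this gives a subset $\dot E\subseteq H$ with $p^{-1}(\dot E)\notin\sR(G)$, that is $\dot E\notin\sR(H)$, contradicting (i). Hence $H$ is finite; the reverse implication $(iii)\Rightarrow(i)$ was already obtained above.

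The decisive and hardest implication is $(iv)\Rightarrow(iii)$, which I would prove in contrapositive form: assuming $H$ infinite I must exhibit some $u\in B(H)$ with $r(u)\le 1$ that is not power bounded. The qualitative obstruction to power boundedness is already available, since by Lemma \ref{thm3} a power bounded element has $\dot E_u,\dot F_u\in\sR_c(H)$, and by Corollary \ref{cor2} its set $\dot E_u^\circ$ is closed with $\lambda(\dot E_u\setminus\dot E_u^\circ)=0$. Thus in the non-discrete case Lemma \ref{lem1} combined with Lemma \ref{lem6} produces $u$ with $\|u\|_\infty=1$ whose set $\dot F_u$ is nowhere dense of positive measure and therefore not in $\sR_c(H)$, so $u$ cannot be power bounded. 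The genuine difficulty, and the crux of the whole theorem, is to arrange the construction so that in addition $r(u)\le 1$, equivalently so that $\|u^n\|_{B(H)}$ grows subexponentially rather than geometrically; this is the hypergroup incarnation of the Schreiber-type measure-algebra phenomenon embodied by the Assani matrix, where the spectral radius equals $1$ while the powers remain unbounded. For the infinite discrete case I would instead reduce to Schreiber's theorem on the measure algebra of the compact structure space, again producing a singular element of spectral radius $1$ with unbounded powers.
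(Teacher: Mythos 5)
Your treatment of the implications issuing from (iii) is fine, but each of the three hard implications in your plan has a genuine gap, two of which you flag yourself without resolving. For $(iv)\Rightarrow(iii)$, which you rightly call the decisive step: producing $u$ with $\Vert u\Vert_\infty=1$ and $\dot F_u\notin\sR_c(H)$ only contradicts the power boundedness \emph{property} if you also know $r(u)\le 1$, and you explicitly leave that open; moreover your fallback for the infinite discrete case --- ``reduce to Schreiber's theorem on the measure algebra of the compact structure space'' --- is not available here, since a discrete ultraspherical hypergroup has no Pontryagin dual and $B(H)$ is not a measure algebra $M(\widehat{H})$. The paper's route is different and avoids both problems: from $B(H)$ having the property one gets that $A(H)$ has it (the norms and spectral radii agree on $A(H)$), hence $H$ is discrete by Theorem \ref{CPBP}; if $H$ were infinite one passes to a countably infinite subhypergroup $K$, transfers the property to $B(K)$ by Lemma \ref{lem8}, and only then applies Lemma \ref{lem6} --- the countability of $K$ is exactly what makes $K\setminus\dot E$ $\sigma$-compact for \emph{every} subset $\dot E$, so that every subset of $K$ lands in $\sR_c(K)$ and $K$ is forced to be finite via the already established $(i)\Rightarrow(iii)$. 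Your plan has no analogue of this reduction to a countable subhypergroup, and without it Lemma \ref{lem6} does not apply to arbitrary subsets.

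The other two gaps: for $(ii)\Rightarrow(iii)$ you correctly dispose of the non-discrete case via Lemma \ref{lem1} and a.e.\ continuity of weakly almost periodic functionals, but the infinite discrete case is left as an unexecuted ``combinatorial construction'' violating Grothendieck's criterion; the paper simply invokes Skantharajah's result that Arens regularity of $L^1(H)$ forces $H$ to be finite (and in fact only needs $(i)\Rightarrow(ii)\Rightarrow(iii)$, where the content of $(i)\Rightarrow(ii)$ is that indicator functions of coset-ring sets lie in $B(H)\subseteq WAP(L^1(H))$ and simple functions are dense in $L^\infty(H)$). For $(i)\Rightarrow(iii)$ you assert that an infinite $G$ admits a $\pi$-saturated subset outside $\sR(G)$; knowing $\sR(G)\neq 2^G$ does not by itself produce a \emph{saturated} such set, and you give no argument for the union-of-orbits candidate. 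The paper sidesteps this entirely by proving $(i)\Rightarrow(ii)\Rightarrow(iii)$. In short, the architecture of your proof would need to be reorganized along these lines before any of the three hard implications closes.
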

	\begin{proof}
		(i) $\Longrightarrow$ (ii). First note that, by hypothesis and  Lemma \ref{lem7}, $\1_{\{\dot e\}}\in B(H).$ This implies that $H$ is discrete and hence $\sR(H)=\sR_c(H)$. 
		Now, let $ \dot E $ be any subset of $ H $. Applying hypothesis and  Lemma \ref{lem7} again, we get $ \1_{\dot E} \in B(H) \subseteq WAP(L^1(H)) $. Now density of simple functions in $ L^\infty (H)$ implies that $WAP(L^1(H)) = L^\infty (H) ,$ which in turn implies that $ L^1(H) $ is Arens regular.
		
		(ii) $\Longrightarrow$ (iii). This follows from  \cite[Theorem 5.2.3]{ska89}.
  
            (iii) $\Longrightarrow$ (i) is obvious.
		
		(iii) $\Longrightarrow$ (iv). If $ H $ is finite, then clearly $B(H) = A(H)$ and hence, by Theorem \ref{CPBP}, $B(H)$ has the power boundedness property.
		
		(iv) $\Longrightarrow$ (iii). 
		Suppose that $B(H)$ is power bounded. Then $A(H) $ is also power bounded. Thus, by Theorem \ref{CPBP}, $H$ is discrete. Towards a contradiction, assume that $H$ has an infinite countable subhypergroup $K.$ As $B(H)$ is power bounded, by Lemma \ref{lem8}, $B(K)$ also possesses the power boundedness property. Note that, by Lemma \ref{lem6}, for any subset $ \dot E $ of $ K $ there exists $ u \in B(K) $ with $ \Vert u \Vert_\infty =1 $ such that $ F_u = \dot E $. Now since $ u $ is power bounded, by Corollary \ref{cor3},  $ \dot E \in \sR_c(K) $. Thus $ K $ is finite, a contradiction. Hence, $ H $
		is finite.
	\end{proof}
	
	\subsection{Power boundedness of $I(\dot{E})$}
	
	Our next result deals with the power boundedness of the ideal $k(\dot B).$ This is an analogue of \cite[Proposition 2.5]{klu11}.
	
	\begin{proposition} \label{por4}
		Let  $ H $ be an ultraspherical hypergroup  and $ \dot E $ a closed subset of  $ H $. If the ideal $k(\dot E)$
		has the power boundedness property, then every compact subset of $H \setminus \dot E$ belongs to  $ \sR_c(H) $.
	\end{proposition}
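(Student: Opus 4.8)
The plan is to realize the given compact set as the level set $\dot F_u$ of a power-bounded element of the ideal $k(\dot E)=k_{A(H)}(\dot E)$ and then invoke Lemma \ref{thm3}. Fix a compact set $C\subseteq H\setminus\dot E$. Since $A(H)$ is semisimple and regular with Gelfand spectrum $H$, the spectral radius of any $u\in A(H)$ equals $\|u\|_\infty$; hence any $u\in k(\dot E)$ with $0\le u\le 1$ automatically satisfies $r(u)\le 1$, and the hypothesis that $k(\dot E)$ has the power boundedness property forces such a $u$ to be power bounded in $A(H)$, and therefore in $B(H)$ (the inclusion $A(H)\hookrightarrow B(H)$ is contractive). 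By Lemma \ref{thm3} this gives $\dot F_u\in\sR_c(H)$. Thus it suffices to construct $u\in A(H)$ with $0\le u\le 1$, compact support contained in $H\setminus\dot E$ (so that $u\in k(\dot E)$), and $\dot F_u=\{u=1\}=C$.

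For the construction I would first assume $H$ second countable. Using regularity and the Tauberian property of $A(H)$ together with the positive bump functions of Lemma \ref{func1}, choose $\phi\in A(H)$ with $0\le\phi\le 1$, $\phi\equiv 1$ on an open neighbourhood of $C$, and $\supp\phi$ compact and contained in $H\setminus\dot E$. Next apply Lemma \ref{func2} to the closed set $C$ to obtain $g\in A(H)$ with $0\le g\le 1$ and $g^{-1}(0)=C$. Put $u=\phi(1-g)=\phi-\phi g\in A(H)$. Then $0\le u\le 1$ and $\supp u\subseteq\supp\phi\subseteq H\setminus\dot E$, so $u\in k(\dot E)$. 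Since both factors lie in $[0,1]$, we have $u(\dot x)=1$ exactly when $\phi(\dot x)=1$ and $g(\dot x)=0$; as $g$ vanishes precisely on $C$ and $\phi\equiv 1$ on a neighbourhood of $C$, this yields $\dot F_u=C$, which finishes the second countable case.

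For general $H$ the plan is to mirror the reduction used in the proof of Theorem \ref{CPBP}. First pass to an open, $\sigma$-compact subhypergroup $H'$ containing $C$; since $H'$ is open, $A(H')$ embeds isometrically into $A(H)$, one checks (as in \cite[Lemma 5.1]{kum14}) that $k_{A(H')}(\dot E\cap H')$ inherits the power boundedness property, and $\sR_c(H')\subseteq\sR_c(H)$, so it is enough to prove $C\in\sR_c(H')$. Then, invoking \cite[Theorem 1.1]{kum17}, choose a compact subhypergroup $H''$ of $H'$ with $H'//H''$ second countable, identify $A(H'//H'')$ isometrically inside $A(H')$, and transfer the problem to this second countable quotient, where the construction of the previous paragraph applies.

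The main obstacle lies in this last transfer. The level-set construction forces $\dot F_u=C$ to be a $G_\delta$ set, which is guaranteed by Lemma \ref{func2} when $H$ is second countable but may fail for an arbitrary compact subset of a non-second-countable hypergroup; thus an arbitrary $C$ need not be directly realisable as a level set. The delicate point is therefore to arrange the reduction so that it is coset-ring membership of $C$ \emph{itself}, and not merely of a saturated set or an approximating neighbourhood, that is carried back from $H'//H''$. The tools I expect to use here are Corollary \ref{lem9} (interiors and boundaries of coset-ring sets are again in the coset ring) together with the observation that every relatively compact open neighbourhood of $C$ contains an open coset-ring set separating $C$: applying Corollary \ref{cor1} to the bump $\phi$ above shows $\1_{\dot F_\phi^\circ}\in B(H)$, whence $\dot F_\phi^\circ\in\sR_c(H)$ is an open coset-ring neighbourhood of $C$. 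Tying these neighbourhood-level statements down to $C$ itself through the quotient is where I expect the real work of the argument to be, rather than in the core level-set construction.
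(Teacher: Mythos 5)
Your overall strategy is the same as the paper's: realize the compact set $C\subseteq H\setminus\dot E$ as $\dot F_{u}$ for some $u\in k(\dot E)$ with $\Vert u\Vert_\infty=1$, use the power boundedness property of $k(\dot E)$ (spectral radius $=$ sup-norm by semisimplicity) to conclude $u$ is power bounded, and invoke Lemma \ref{thm3}. Your second countable construction $u=\phi(1-g)$ with $g$ from Lemma \ref{func2} is correct as far as it goes. The problem is the general case, and you have correctly diagnosed where it breaks but not repaired it: the reduction through a second countable quotient $H'//H''$ does not transfer membership of $C$ itself, only of the saturation $\textsf{q}^{-1}(\textsf{q}(C))$, and producing arbitrarily small open coset-ring neighbourhoods of $C$ (via $\dot F_\phi^\circ$ and Corollary \ref{lem9}) does not yield $C\in\sR_c(H)$, since $\sR_c(H)$ is a Boolean ring and is not closed under the intersections you would need. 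So as written the proof is complete only for second countable $H$, and the announced route to the general case is a genuine gap.

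The repair is to drop Lemma \ref{func2} (the only place second countability enters) and use Lemma \ref{lem6} instead, which requires only $\sigma$-compactness of the complement. This is what the paper does: by Jewett's Lemma 10.1C there is an \emph{open $\sigma$-compact} subhypergroup $K$ of $H$ containing $C$; then $K\setminus C$ is $\sigma$-compact, so Lemma \ref{lem6} applied inside $K$ gives $u\in B(K)$ with $\Vert u\Vert_\infty=1$ and $\dot F_u=C$; Lemma \ref{lem8} extends $u$ to $u_0\in B(H)$ vanishing off $K$; and multiplying by an $A(H)$-bump $v$ with $0\le v\le 1$, $v\equiv 1$ on $C$ and $\supp(v)\cap\dot E=\emptyset$ produces $u_1=vu_0\in k(\dot E)$ with $\Vert u_1\Vert_\infty=1$ and $\dot F_{u_1}=C$, after which your concluding argument applies verbatim. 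No second countability, quotient hypergroups, or approximation of $C$ by coset-ring neighbourhoods is needed.
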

	\begin{proof}
		Let $ \dot C $ be a compact subset of $ H\setminus \dot E $. Then, by \cite[Lemma 10.1 C]{jew75}, there exists an open $ \sigma $-compact subhpergroup $ K $ of $ H $ which contains $ \dot C $. Since $ K \setminus \dot C $ is $ \sigma $-compact, by Lemma \ref{lem6}, there exists $ u \in B(K) $ with $ \Vert u \Vert_\infty =1 $ such that $ \dot F_u =\dot C $. Now, use Lemma \ref{lem8},  to get  $ u_0 \in B(H) $ that is $ u $ on $ K $ and vanishes outside $ K $. Also, using regularity of $ A(H) $, one can find $ v\in A(H) $ such that $ 0\leq v\leq 1,$ $v \equiv 1$ on $\dot C$ and $\supp(v) \cap \dot E = \emptyset.$ Let $u_1 = v u_0  $. Then $ u_1 \in k(\dot E), \Vert u_1 \Vert _\infty =1 $ and $ \dot F_{u_1}= \dot C$. By hypothesis,   $ u_1 $ is power bounded and hence, by Lemma \ref{thm3},  $\dot C\in \sR_c (H).$
	\end{proof}
	
	We shall end this section with a result which is of independent interest. This result is about the existence of bounded approximate identity in the ideals $I(\dot E).$
	\begin{lemma} \label{lem11}
		Let $H$ be an ultraspherical hypergroup associated to an amenble  locally compact group $G$. If $ \dot E \in \sR_c(H) $, then
			\item[(i)] $\dot E  $ is a set of synthesis for $A(H).$
			\item[(ii)] $I(\dot E) $ has a bounded approximate identity.
	\end{lemma}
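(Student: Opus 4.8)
The plan is to lift everything to the group $G$, invoke the known synthesis and bounded approximate identity results for the closed coset ring of an amenable group, and then push both conclusions back to $H$ through the canonical projection of $A(G)$ onto $A(H)$. First I would set $E=p^{-1}(\dot E)$, which lies in $\sR_c(G)$ by the very definition of $\sR_c(H)$. Since $G$ is amenable, I would invoke the group-level facts that every member of $\sR_c(G)$ is a set of synthesis for $A(G)$ and that the associated ideal $k_{A(G)}(E)$ admits a bounded approximate identity $(e_\alpha)$, say with $\sup_\alpha\|e_\alpha\|_{A(G)}\le C$; these are standard consequences of amenability together with the structure of the coset ring (see, e.g., \cite{kan09} and the references therein). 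Amenability enters precisely here, since already the existence of a bounded approximate identity in $A(G)$ forces $G$ to be amenable.

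The transfer mechanism is the norm-decreasing idempotent $\Pi\colon A(G)\to A(H)=A_\pi(G)$ induced by the spherical projector $\pi$, whose range is exactly the radial subalgebra, so that $\Pi(u)=u$ for every $u\in A(H)$. Two features of $\Pi$ drive the argument. First, it is a module map over $A(H)$: for $u\in A_\pi(G)$ and $w\in A(G)$ one has $\Pi(uw)=u\,\Pi(w)$, which is the $A(G)$-level incarnation of the identity $\pi(\pi(f)g)=\pi(f)\pi(g)$ in the definition of a spherical projector. Second, it does not enlarge supports beyond saturation, namely $\supp\Pi(w)\subseteq p(\supp w)$ as a subset of $H$; together with the fact that $E$ is $\pi$-saturated and that disjointness from a saturated set is preserved under saturation, this shows that $\Pi$ carries $j_{A(G)}(E)$ into $j_{A(H)}(\dot E)$.

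For part (i), given $u\in k_{A(H)}(\dot E)$ I would regard $\tilde u\in k_{A(G)}(E)=J_{A(G)}(E)$, approximate $\tilde u$ in $A(G)$-norm by elements $w_n\in j_{A(G)}(E)$, and apply $\Pi$. Since $\Pi(\tilde u)=\tilde u$ and each $\Pi(w_n)\in j_{A(H)}(\dot E)$ by the second feature, I obtain $u=\lim_n\Pi(w_n)\in J_{A(H)}(\dot E)$, whence $k_{A(H)}(\dot E)=J_{A(H)}(\dot E)$, i.e. $\dot E$ is a set of synthesis for $A(H)$. For part (ii), I would verify that $(\Pi e_\alpha)$ is a bounded approximate identity for $I(\dot E)=k_{A(H)}(\dot E)$: each $\Pi e_\alpha$ lies in $I(\dot E)$ because $e_\alpha$ vanishes on the saturated set $E$, the bound $\|\Pi e_\alpha\|\le C$ is immediate from $\|\Pi\|\le 1$, and for $u\in I(\dot E)$ the module identity gives $\Pi(e_\alpha)\,u=\Pi(e_\alpha u)=\Pi(u e_\alpha)\to\Pi(u)=u$, using $u e_\alpha\to u$ in $A(G)$.

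The main obstacle is the first feature of $\Pi$: rigorously verifying that the induced projection is multiplicative over the radial subalgebra, i.e. $\Pi(uw)=u\,\Pi(w)$ for $u\in A(H)$ and $w\in A(G)$. This requires unwinding the definition of $\Pi$ from the spherical projector on $C_c(G)$ and propagating the relation $\pi(\pi(f)g)=\pi(f)\pi(g)$ to the Fourier-algebra level. Once this module identity and the support bound are established, both the synthesis transfer and the bounded-approximate-identity transfer are routine, and the amenability hypothesis is used only to supply the corresponding facts on $G$.
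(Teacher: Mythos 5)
Your proposal is correct and follows essentially the same route as the paper: lift $\dot E$ to $E=p^{-1}(\dot E)\in\sR_c(G)$, invoke the amenable-group facts (synthesis for $\sR_c(G)$ and the bounded approximate identity of \cite[Lemma 2.2]{fkl03}), and push back to $A(H)$ via the norm-decreasing projection induced by $\pi$, using the module identity $\pi(\pi(f)g)=\pi(f)\pi(g)$ and the fact that $E$ is $\pi$-saturated. The only cosmetic difference is that for part (i) the paper simply cites \cite[Theorem 3.1]{dkl14} rather than re-deriving the synthesis transfer as you do.
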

	\begin{proof}
		(i). This follows from \cite [Theorem 3.1]{dkl14}.\\
		(ii). Since $ p^{-1}(\dot E) \in \sR_c(G)$,  by \cite[Lemma 2.2]{fkl03} the ideal $ I (p^{-1}(\dot E)) $ has a bounded approximate identity.  Let $ v \in I (p^{-1}(\dot E)) $ and let $ v_0 $ be the element of $ A(H) $ associated with $ \pi(v) $. Then for each $ \dot x \in E $, we have
		$$v_0 (\dot x) = \pi (v) (x) = \int_{\mathcal O_x}v(z) d\pi^*(\delta_x)(z),$$
		since $\mathcal O_x \subseteq  p^{-1}(\dot E) $.
		Thus, we can identify $ \pi (I (p^{-1}(\dot E)))  $ with $I(\dot E) $. Suppose that $ (e_\alpha)_\alpha $ is a bounded approximate identity of $I (p^{-1}(\dot E))$ and let  $ (\dot e_\alpha)_\alpha $ be its  associated net in $ A(H) ,$ i.e., $\tilde {\dot e}_\alpha= \pi(e_\alpha) .$ Then, for each $ w \in I(\dot E) $, we have,
		\begin{eqnarray*}
		\Vert  w \dot e_\alpha - w\Vert_{A(H)} &=& \Vert \tilde w \tilde {\dot e}_\alpha - \tilde w\Vert_{A(G)} \\ &=&
		\Vert  \pi( \tilde w  e_\alpha - \tilde w)\Vert_{A(G)} \\ &\leq&\Vert   \tilde w  e_\alpha - \tilde w\Vert_{A(G)}\rightarrow 0.
		\end{eqnarray*}  
		This completes the proof.
	\end{proof}
	
	\section*{Acknowledgement}
        The second author would like to thank the Science and Engineering Board, India, for the MATRICS project fund with the Project No:MTR/2018/000849.
	
	\bibliographystyle{amsalpha}
	\bibliography{ref}

\end{document}